\newcommand{\comment}[1]
{\todo[inline, linecolor=green,backgroundcolor=green!25,bordercolor=green,]{#1}}
\newtheorem{prop}{Proposition}[section]
\newtheorem{theorem}[prop]{Theorem}
\newtheorem{cor}[prop]{Corollary}
\newtheorem{lemma}[prop]{Lemma}
\theoremstyle{definition}
\newtheorem{defn}[prop]{Definition}
\newtheorem{example}[prop]{Example}
\newtheorem{void}[prop]{}
\newcommand{\B}[1]{\mathbb{#1}}
\def\L{\ensuremath{\mathcal{L}}}
\def\R{\ensuremath{\mathcal{R}}}
\def\T{\ensuremath{\mathcal{T}}}
\def\RR{\ensuremath{\mathbb{R}}}
\def\supp{\operatorname{supp}}
\def\defeq{\overset{\text{def}}{=}}
\newcommand{\OP}[1]{\operatorname{#1}}
\def\Ulpos{U_{\ell\text{-pos}}}
\author{Assaf Libman}
\address{Institute of Mathematics,
University of Aberdeen,
King's College,
Fraser Noble Building,
Aberdeen AB24 3UE,
United Kingdom}
\email{a.libman@abdn.ac.uk}
\keywords{Polytopes, Optimisation, Finite probability spaces}
\subjclass[2010]{52B12, 52B55, 60B99}
\begin{document}

\title[Polytopes associated with lattices of subsets]{Polytopes associated with lattices of subsets and maximising expectation of random variables}

\begin{abstract}
The present paper originated from a problem in Financial Mathematics concerned with calculating the value of a European call option based on multiple assets each following the binomial model.
The model led to an interesting family of polytopes $P(b)$ associated with the power-set $\L = \wp\{1,\dots,m\}$ and parameterized by $b \in \RR^m$, each of which is a collection of probability density function on $\L$.
For each non-empty $P(b)$ there results a family of probability measures on $\L^n$ and, given a function $F \colon \L^n \to \RR$, our goal is to find among these probability measures one which maximises (resp. minimises) the expectation of $F$.
In this paper we identify a family of such functions $F$, all of whose expectations are maximised (resp. minimised under some conditions) by the same {\em product} probability measure defined by a distinguished vertex of $P(b)$ called the supervertex (resp. the subvertex).
The pay-offs of European call options belong to this family of functions.
\end{abstract}

\maketitle

\section{Introduction and statement of results}
\label{S:main results}

This paper originated from a problem in Financial Mathematics which we describe in Section \ref{Int:FM motivation} below.
The combinatorial objects it led to are the subject of this paper.

\begin{void}{\bf Polytopes associated to the poset $2^m$.}
\label{SS:polytopes associates to 2m}
Let $\L=\{0,1\}^m \cong \wp\{ 1,2,\dots,m\}$ denote the poset of $m$-tuples of zeros and ones, i.e function $\lambda \colon \{1,\dots,m\} \to \{0,1\}$.
Let $\RR^\L$ denote the Euclidean space of dimension $2^m$ of all functions $x \colon \L \to \RR$, equipped with the standard basis $\{ e_\lambda\}_{\lambda \in \L}$ and inner product $\langle \ , \ \rangle$.

The unit simplex $\Delta(\L) \subseteq \RR^\L$ is the convex hull of $\{e_\lambda\}_{\lambda \in \L}$.
It is the set of all probability density functions on $\L$, see Section \ref{SS:vector spaces}. 
Vectors $f \in \RR^\L$ are viewed as random variables on $\L$ and it is clear that $\langle f ,x \rangle=E_x(f)$ is the expectation.

The assignment $e_\lambda \mapsto ((-1)^{\lambda(1)}, \dots, (-1)^{\lambda(m)})$ is a bijection between the vertices of the simplex $\Delta(\L)$ and the vertices of the $m$-dimensional cube $[-1,1]^m$.
There results a surjective linear map of polytopes $\Lambda \colon \Delta(\L) \to [-1,1]^m$ and we obtain a family of polytopes $P(b) \subseteq \Delta(\L)$ indexed by $b \in [-1,1]^m$, see Definition \ref{D:P(b)},
\[
P(b) \ \defeq \ \Lambda^{-1}(b).
\]
\end{void}

\begin{void}{\bf Maximizing expectations}
\label{SS:maximize expectation}
Fix some $n>0$.
Let $F \colon \L^n \to \RR$ be a function and $\Gamma \subseteq \Delta(\L^n)$ a compact connected subset of probability measures on $\L^n$.
Then $\{ E_P(F) : P \in \Gamma \}$ is a closed interval in $\RR$ and a fundamental question is to compute its end points
\[
F_{\min}(\Gamma)=\min \{ E_x(F) : x \in \Gamma\} \qquad \text{and} \qquad F_{\max}(\Gamma)=\max \{ E_x(F) : x \in \Gamma\}.
\]
In this generality the problem is hopeless unless we narrow down the choices for $\Gamma$ and $F$.

In Section \ref{SS:extension F tree} we will define the collections $\Gamma(\L^n,b)$ for every $b \in [-1,1]^m$.
We will introduce the collection of truncated $\ell$-positive functions $F \colon \L^n \to \RR$ in Definition \ref{D:symmetric L positive}.
The main result of this paper is Theorem \ref{T:Fmax Fmin is expectation of product measure} which shows that $F_{\max}(\Gamma)$ and $F_{\min}(\Gamma)$ are attained at a {\em product measure} on $\L^n$, explicitly described in terms of $b$.
\end{void}

\begin{void}{\bf Trees}\label{SS:extension F tree}
Fix some $n \geq 0$.
Let $\T$ denote the set of all words of length  at most $n$ in the alphabet $\L$.
It is partially ordered by $\tau \preceq \tau'$ if $\tau$ is a prefix of $\tau'$.
This renders $\T$ a directed tree with the empty word as its root.
The set of vertices at level $k$ is $\T_k=\L^k$ and $\L^n$ is the set of leaves.
For any $\omega \in \T$ set
\begin{equation}\label{E:def A_omega}
A_\omega = \{ \tau \in \L^n \ : \ \omega \preceq \tau\}.
\end{equation}
We will write $\omega \tau$ for the concatenation of words $\omega,\tau \in \T$.
Clearly, if $\omega \in \T_{n-k}$ then $A_\omega =\{\omega\tau: \tau \in \L^k\} \cong \L^k$.
Let $\T^*$ denote the set of words of length $<n$.
The set of {\em successors} of $\omega \in \T^*$, namely $\OP{succ}(\omega) = \{\omega \lambda : \lambda \in \L\}$, is canonically identified with $\L$.
We call $\T$ an {\em $\L$-labelled tree}.

A function $\Phi \colon \T^* \to \Delta(\L)$ is a choice of probability measures on $\OP{succ}(\omega)$ for every $\omega \in \T^*$. 
It gives rise to a probability density function $P(\Phi)$ on $\L^n$ 
\begin{equation}\label{D:P-Phi}
P(\Phi)(\lambda_1\cdots\lambda_n)=\prod_{k=1}^n \Phi(\lambda_1\cdots \lambda_{k-1})(\lambda_k).
\end{equation}
In fact, any probability measure on $\L^n$ arises in this way, see Proposition \ref{P:P(Phi) and its properties}.

It is natural to consider probability measures on $\L^n$ obtained from functions $\Phi$ with values in a given connected compact subset of $\Delta(\L)$.
Our interest is in $P(b) \subseteq \Delta(\L)$ and we define 
\begin{equation}\label{E:Gamma L^n b}
\Gamma(\L^n,b) = \{ P(\Phi) \ : \ \Phi \colon \T^* \to P(b)\}.
\end{equation}
Notice that $\Gamma(\L^n,b)$ is compact and connected since it is the image of $\prod_{\T^*}P(b)$.

There is an inductive procedure to compute $E_{P(\Phi)}(F)$ for $F \colon \L^n \to \RR$ and $P(\Phi) \in \Gamma(\L^n,b)$ by going down the levels of the tree $\T$.
Define by induction functions $F^{(k)}_\Phi \colon \T_{n-k} \to \RR$ where $F^{(0)}_\Phi=F$ and $F^{(k)}_\Phi(\omega) = E_{\Phi(\omega)}(F^{(k-1)}_{\Phi}|_{\OP{succ}(\omega)})$ for any $\omega \in \T_{n-k}$.
See Definition \ref{D:extension of F}.
Then that $F^{(n)}_\Phi(\emptyset)=E_{P(\Phi)}(F)$, see Proposition \ref{P:F^k as conditional probability}.
We can now describe an algorithm to find $F_{\max}(\Gamma)$ and $F_{\min}(\Gamma)$ where $\Gamma=\Gamma(\L^n,b)$.
\begin{void}{\bf Algorithm:}
Define functions $F^{(k)}_{\max} \colon \T_{n-k} \to \RR$, where $k \geq 0$, and $\Phi^{(k)}_{\max} \colon \T_{n-k} \to P(b)$ where $k \geq 1$, by induction as follows.
Set $F_{\max}^{(0)}=F$.
Assume $F^{(k-1)}_{\max}$ has been defined where $k \geq 1$.
Use the simplex method, or otherwise, to choose for any $\omega \in \T_{n-k}$ some $p \in P(b)$ which maximises $E_x(F_{\max}^{(k-1)}|_{\OP{succ}(\omega)})$ over $x \in P(b)$. 
Set $\Phi_{\max}^{(k)}(\omega)=p$ and let $F_{\max}^{(k)}(\omega)$ be this maximum expectation.

We obtain a function  $\Phi_{\max} \colon \T^* \to P(b)$, and one checks that $F_{\max}^{(k)}=F^{(k)}_{\Phi_{\max}}$ for all $k$.
By the monotonicity of the expectation it easily follows by induction that $F^{(k)}_\Phi(\omega) \leq F^{(k)}_{\max}(\omega)$ for any $\Phi \colon \T^* \to P(b)$.
Therefore $F_{\max}(\Gamma)=F_{\max}^{(n)}(\emptyset)$ and $P(\Phi_{\max})$ is the probability measure that realises the maximum.
An analogous algorithm computes $F_{\min}(\Gamma)$.
\end{void}
This calculation requires the simplex algorithm to be invoked $O\left(2^{m(n-1)}\right)$ times, once for each $\omega \in \T^*$.
This is exponential in $n$, the height of $\T$, and gives no insight to the problem.
The point of Theorem \ref{T:Fmax Fmin is expectation of product measure} is that for truncated $\ell$-positive functions $F$ the simplex algorithm can be avoided, and if in addition $F$ is symmetric then the calculation is polynomial in $n$.
\end{void}

\begin{void}{\bf Truncation and $\ell$-positive vectors}\label{SS:ell positive}
The {\em truncation} of $x \in \RR$ is $x^+=\max\{x,0\}$.
The truncation of $v \in \RR^\L$ is the vector $v^+$ with $v^+(\lambda) = v(\lambda)^+$.

Let $\ell_1,\dots,\ell_m \in \RR^\L$ be the rows of the matrix representing the linear map $\Lambda$ in Section \ref{SS:polytopes associates to 2m} and let $\ell_0 \in \RR^\L$ be the constant function with value $1$.
Let $U$ denote the subspace of $\RR^\L$ they span.
See Definition \ref{D:ell_i and U} and Example \ref{EX:L and L'} where the rows of the matrix $L$ are the vectors $\ell_i$ when $m=4$.
\end{void}

\begin{defn}\label{D:ell-positive vectors}
An {\em $\ell$-positive} vector in $U$ is a vector $u=\sum_{i=0}^m a_i \ell_i$ such that $a_1,\dots,a_m>0$ (and no condition on $a_0$). 
Let $U_{\ell\text{-pos}}$ be the set of these vectors.
The set of {\em truncated $\ell$-positive vectors} is $(\Ulpos)^+= \{ \sum_{i=1}^k u_i^+ : u_i \in \Ulpos, k \geq 0\}$. 
\end{defn}

\begin{defn}\label{D:symmetric L positive}
A function $F \colon \L^n \to \RR$ is called {\em symmetric} if the value of $F(\lambda_1\cdots\lambda_n)$ is independent of the order of the $\lambda_i$'s.
It is called {\em truncated $\ell$-positive} if the function $f \colon \lambda \mapsto F(\omega \lambda \tau)$ is an element of $(\Ulpos)^+$ for any words $\omega, \tau \in \T$ of total length $n-1$.
\end{defn}

Clearly, truncated $\ell$-positive functions have non-negative values.

\begin{void}{\bf The supervertex and the subvertex of $P(b)$.}\label{SS:supervertex and subvertex}
The main observation of this paper is that we can single out a {\em vertex} $q^* \in P(b)$, called the {\em supervertex} and a {\em vector} $q_* \in \RR^\L$ called the {\em subvertex} of $P(b)$, both described  purely in term of $b \in \RR^m$.
See Definitions \ref{D:supervertex - descending case}, \ref{D:supervertex - general case} and \ref{D:subvertex}.
To avoid confusion the reader is imperatively warned that the subvertex $q_*$ is only a {\em vector} in $\RR^\L$ and {\em need not} be in general an element of $P(b)$. 
Remarkably, when $q_* \in P(b)$ then it is a vertex of $P(b)$.
See Proposition \ref{P:q_* in P(b)}. 

The subvertex $q_*\in \RR^\L$ is supported by $\nu_0,\dots,\nu_m \in \L$ described in Definition \ref{D:nu_i} and $q_*(\nu_i)=b''(i)$ where $b''(i)$ are described in Definition \ref{D:b''}.

If $b(1)\geq \dots \geq b(m)$, the supervertex $q^* \in P(b)$ is supported by $\mu_0,\dots,\mu_m \in \L$ described in Definition \ref{D:mu_i} and $q^*(\mu_i)=b'(i)$ where $b'(i)$ are described in Definition \ref{D:bprime}.

The key results of this paper are Theorems \ref{T:max at supervertex} and \ref{T:min at subvertex} whose Corollary \ref{C:sum of ell-pos+ at supervertex} we restate here.
\end{void}

\begin{theorem}\label{Intro T:maximum at supervertex explicit}
Let $q^*$ and $q_*$ be the supervertex and subvertex of $P(b)$.
For any $u \in (U_{\ell\text{-pos}})^+$ we have $\langle u,q_* \rangle \geq 0$ and
\begin{eqnarray*}
&& \max \{ E_x(u) : x \in P(b) \} \ = \ E_{q^*}(u) = \langle u,q^* \rangle \\
&& \min \{ E_x(u) : x \in P(b) \} \ \geq \ \langle u,q_* \rangle
\end{eqnarray*}
If $\sum_{i=1}^m b(i) \leq 2-m$ then $q_* \in P(b)$, the inequality is an equality, and $\langle u,q_* \rangle=E_{q_*}(u)$.
\end{theorem}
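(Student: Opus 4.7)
The plan is to deduce this corollary from the per-summand statements of Theorems \ref{T:max at supervertex} and \ref{T:min at subvertex}, which assert that the supervertex $q^* \in P(b)$ and, conditionally, the subvertex $q_* \in P(b)$ are \emph{universal} extremizers: a single point of $P(b)$ extremises the linear functional $\langle v^+, \cdot\rangle$ simultaneously for \emph{every} $v \in \Ulpos$. Given such universal extremizers, the extension to arbitrary sums $u=\sum_{i=1}^k v_i^+$ is essentially formal.

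I would begin by recording the per-summand case. For fixed $v \in \Ulpos$, Theorem \ref{T:max at supervertex} gives $\max_{x \in P(b)} \langle v^+, x\rangle = \langle v^+, q^*\rangle$, while Theorem \ref{T:min at subvertex} gives $\min_{x \in P(b)} \langle v^+, x\rangle \geq \langle v^+, q_*\rangle$, with equality and $q_* \in P(b)$ whenever $\sum_i b(i) \leq 2-m$ by Proposition \ref{P:q_* in P(b)}. The non-negativity $\langle v^+, q_*\rangle \geq 0$ follows from the fact that both $v^+$ and $q_*$ have non-negative coordinates, the latter because $q_*(\nu_i)=b''(i) \geq 0$ by construction.

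Passing to the sum, the crucial observation is that the \emph{same} point $q^*$ maximises every $\langle v_i^+, \cdot\rangle$, which upgrades the generic inequality $\max\sum \leq \sum\max$ to an equality. Concretely, with $u = \sum_{i=1}^k v_i^+$,
\[
\max_{x \in P(b)} E_x(u) \ \leq \ \sum_{i=1}^k \max_{x \in P(b)} \langle v_i^+, x\rangle \ = \ \sum_{i=1}^k \langle v_i^+, q^*\rangle \ = \ \langle u, q^*\rangle \ = \ E_{q^*}(u),
\]
and the reverse inequality is immediate from $q^* \in P(b)$. For the minimum, linearity together with the per-summand lower bound yields
\[
\min_{x \in P(b)} E_x(u) \ \geq \ \sum_{i=1}^k \min_{x \in P(b)} \langle v_i^+, x\rangle \ \geq \ \sum_{i=1}^k \langle v_i^+, q_*\rangle \ = \ \langle u, q_*\rangle \ \geq \ 0,
\]
the last non-negativity coming by summation. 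Under the hypothesis $\sum_i b(i) \leq 2-m$, Proposition \ref{P:q_* in P(b)} places $q_* \in P(b)$ and Theorem \ref{T:min at subvertex} upgrades the per-summand bound to equality, collapsing the chain to $\min_{x \in P(b)} E_x(u) = \langle u, q_*\rangle = E_{q_*}(u)$.

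The only non-routine work is hidden inside Theorems \ref{T:max at supervertex} and \ref{T:min at subvertex}: producing a single vertex of $P(b)$ that universally extremises $\langle v^+, \cdot\rangle$ for every $\ell$-positive $v$ is the real content, and once this is granted the present corollary follows by the linear-algebra bookkeeping above. I therefore expect the main obstacle to lie not in this reduction but in the explicit vertex constructions of $q^*$ and $q_*$ and in the structural analysis needed to certify their universal extremal property.
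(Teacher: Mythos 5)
Your reduction is exactly the paper's: Corollary \ref{C:sum of ell-pos+ at supervertex} (of which the stated theorem is a restatement) is treated there as an immediate consequence of Theorems \ref{T:max at supervertex} and \ref{T:min at subvertex}, using precisely the point that one and the same vertex $q^*$ (resp.\ the vector $q_*$) extremises every summand, so the sum-of-maxima bound collapses to an equality; your bookkeeping for the sum and for the equality case under $\sum_i b(i)\leq 2-m$ via Proposition \ref{P:q_* in P(b)} is fine.

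One justification you give is, however, wrong and should be removed: you claim $\langle v^+,q_*\rangle\geq 0$ because ``both $v^+$ and $q_*$ have non-negative coordinates, the latter because $q_*(\nu_i)=b''(i)\geq 0$ by construction.'' This is false in general: only $b''(1),\dots,b''(m)\geq 0$ are guaranteed when $\|b\|_\infty\leq 1$, while $q_*(\nu_0)=b''(0)=1-\sum_{i=1}^m b''(i)$ may well be negative (the paper warns about this explicitly, and indeed $q_*$ need not even lie in $P(b)$). The non-negativity $\langle v^+,q_*\rangle\geq 0$ is therefore not a coordinatewise triviality; it is the first assertion of Theorem \ref{T:min at subvertex}, whose proof handles the delicate case $\alpha_0<0$ by observing that then $\nu_0\notin\supp(v^+)$, so the possibly negative weight $b''(0)$ never enters the pairing. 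Since you are already invoking Theorem \ref{T:min at subvertex} for the per-summand lower bound, simply cite its non-negativity statement as well and sum over the summands; with that substitution your argument is complete and coincides with the paper's.
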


Theorem \ref{Intro T:maximum at supervertex explicit} allows us to avoid appealing to the simplex method in the calculation of $F_{\max}(\Gamma)$ and $F_{\min}(\Gamma)$ in Section \ref{SS:extension F tree} for truncated $\ell$-positive functions $F$ and $\Gamma=\Gamma(\L^n,b)$.
Moreover, $F_{\max}(\Gamma)$ , and under some conditions $F_{\min}(\Gamma)$ are attained at a product measure on $\L^n$ defined by the supervertex and the subvertex.

For $F \colon \L^n \to \RR$ and $\omega \in \T_{n-k}$ identify $F|_{A_\omega}$ with the function $F_{\omega-} \colon \L^k \to \RR$ defined by
\begin{equation}\label{E:F_omega-}
F_{\omega-}(\tau) = F(\omega\tau), \qquad (\tau \in \L^k).
\end{equation}
If $\omega$ is the empty word then $F_{\omega-}=F$.

\begin{theorem}\label{T:Fmax Fmin is expectation of product measure}
Let $q^*$ and $q_*$ be the supervertex and subvertex of $P(b)$ and set $\Gamma=\Gamma(\L^n,b)$.
Let $F \colon \L^n \to \RR$ be a truncated $\ell$-positive function.
Then for any $P \in \Gamma$ and any $\omega \in \T_{n-k}$ such that $P(A_\omega)>0$
\[
E_P(F|A_\omega) \leq E_{q^*}(F_{\omega-}).
\]
In particular $F_{\max}(\Gamma)=E_{q^*}(F)$.

If $q_* \in P(b)$, which is equivalent to the condition $\tfrac{1}{m}\sum_{i=1}^m b(i) \leq \tfrac{2}{m}-1$, then
\[
E_P(F|A_\omega) \geq E_{q_*}(F_{\omega-}).
\]
In particular $F_{\min}(\Gamma)=E_{q_*}(F)$.
\end{theorem}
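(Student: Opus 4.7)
The plan is to prove both inequalities by a downward induction on the level $n-k$ of $\omega$ in $\T$, mirroring the algorithm of Section \ref{SS:extension F tree}. The crucial simplification is that Theorem \ref{Intro T:maximum at supervertex explicit} lets us replace the simplex-method step by $q^*$ (respectively $q_*$) at every node, independently of $\omega$. Throughout, $E_{q^*}(F_{\omega-})$ is read as expectation with respect to the product measure $(q^*)^{\otimes k}$ on $A_\omega\cong\L^k$, and similarly for $q_*$.

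First I would note a closure property used in every inductive step: the set $(\Ulpos)^+$ is closed under nonnegative linear combinations. This is immediate from Definition \ref{D:ell-positive vectors}, since $\Ulpos$ is closed under multiplication by positive scalars (rescale all $a_1,\dots,a_m>0$ by the same factor) and $(cu)^+ = c\,u^+$ for $c\geq 0$; sums of elements of $(\Ulpos)^+$ are again in $(\Ulpos)^+$ by definition.

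Fix $P=P(\Phi)\in\Gamma$. For the upper bound I would prove by downward induction on $k\in\{0,1,\dots,n\}$ the statement: for every $\omega\in\T_{n-k}$ with $P(A_\omega)>0$,
\[
E_P(F\,|\,A_\omega) \ \leq\ E_{q^*}(F_{\omega-}).
\]
The base case $k=0$ is trivial since $A_\omega=\{\omega\}$ and $F_{\omega-}=F(\omega)$. For the inductive step, the tower property applied at $\omega$ gives
\[
E_P(F\,|\,A_\omega) \ =\ \sum_{\lambda\in\L} \Phi(\omega)(\lambda)\cdot E_P(F\,|\,A_{\omega\lambda}),
\]
where terms with $\Phi(\omega)(\lambda)=0$ are interpreted as $0$ (note $P(A_{\omega\lambda})=P(A_\omega)\,\Phi(\omega)(\lambda)$). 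Applying the inductive hypothesis to each nonzero term and collecting the result gives $E_P(F\,|\,A_\omega)\leq E_{\Phi(\omega)}(g)$, where $g\colon\L\to\RR$ is defined by
\[
g(\lambda)\ =\ E_{q^*}(F_{\omega\lambda-})\ =\ \sum_{\tau\in\L^{k-1}} (q^*)^{\otimes(k-1)}(\tau)\,F(\omega\lambda\tau).
\]
For each fixed $\tau$ the function $\lambda\mapsto F(\omega\lambda\tau)$ lies in $(\Ulpos)^+$, since $\omega$ has length $n-k$ and $\tau$ has length $k-1$ (total $n-1$) and $F$ is truncated $\ell$-positive. By the closure property $g\in(\Ulpos)^+$, so Theorem \ref{Intro T:maximum at supervertex explicit} applied to $u=g$ and $x=\Phi(\omega)\in P(b)$ yields $E_{\Phi(\omega)}(g)\leq E_{q^*}(g)$. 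A Fubini computation rewrites the right-hand side as $E_{q^*}(F_{\omega-})$, closing the induction. Taking $\omega=\emptyset$, $k=n$ gives $E_P(F)\leq E_{q^*}(F)$, and the constant choice $\Phi\equiv q^*$ produces $P(\Phi)=(q^*)^{\otimes n}$ via \eqref{D:P-Phi} and realises equality, establishing $F_{\max}(\Gamma)=E_{q^*}(F)$.

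The minimum statement is proved by the identical induction with inequalities reversed, using the second bound of Theorem \ref{Intro T:maximum at supervertex explicit}; this requires $q_*\in P(b)$, which by Proposition \ref{P:q_* in P(b)} is equivalent to $\sum_i b(i)\leq 2-m$. I do not expect a real obstacle: the only nonobvious ingredient is the stability of $(\Ulpos)^+$ under nonnegative linear combinations, which is what keeps the inductive hypothesis feeding back into Theorem \ref{Intro T:maximum at supervertex explicit} at every level of the tree.
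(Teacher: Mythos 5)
Your argument is correct and is essentially the paper's own proof: the paper establishes exactly your inductive step as Proposition \ref{P:F^k_Phi bounded by F^k_q} (using Lemma \ref{L:F^k explicit formula}, the closure of $(\Ulpos)^+$ under nonnegative combinations, and Corollary \ref{C:sum of ell-pos+ at supervertex}), and then translates it into conditional expectations via Propositions \ref{P:F^k as conditional probability} and \ref{P:P(Phi) and its properties}. You have simply merged these steps by running the same induction directly on $E_P(F|A_\omega)$, including the crucial observation that the coefficients $(q^*)^{\otimes(k-1)}(\tau)$ do not depend on $\lambda$, which is precisely the point the paper flags about $q^*$ and $q_*$ being constant choices on the tree.
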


When $F$ is in addition symmetric we can give highly computable formulas for the right hand sides of the inequalities in Theorem \ref{T:Fmax Fmin is expectation of product measure}.
For any $\lambda \in \L$ let $\lambda^k$ denote the word $\lambda \cdots \lambda$ of length $k$.
Recall the description of $q^*$ and $q_*$ in Section \ref{SS:supervertex and subvertex}.
For $p \in \Delta(\L)$, let $p$ also denote the product measure on $\L^k$ for any $k \geq 0$.

\begin{prop}\label{P:symmetric polynomial complexity}
Assume the hypotheses of Theorem \ref{T:Fmax Fmin is expectation of product measure}.
Assume further that $F$ is symmetric.
Then for any  $\omega \in \T_{n-k}$
\begin{enumerate}[label=(\alph*)]
\item
If $b$ is decreasing i.e $b(1) \geq \dots \geq b(m)$ then
\[
E_{q^*}(F_{\omega-}) =
\sum_{i_0+\dots+i_m=k} \frac{k!}{i_0!\cdots i_m!} \cdot b'(0)^{i_0}\cdots b'(m)^{i_m} \cdot  F(\omega \mu_0^{i_0} \cdots \mu_m^{i_m}).
\]
\item
If $q_* \in P(b)$ then
\[
E_{q_*}(F_{\omega-}) =
\sum_{i_0+\dots+i_m=k} \frac{k!}{i_0!\cdots i_m!}\cdot  b''(0)^{i_0}\cdots b''(m)^{i_m} \cdot F(\omega \nu_0^{i_0} \cdots \nu_m^{i_m}).
\]
\end{enumerate}
\end{prop}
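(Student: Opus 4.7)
The plan is a direct unwinding of the product-measure definition, followed by regrouping terms by multiset composition and collapsing them via the symmetry of $F$. Both (a) and (b) follow from the same argument, so I would prove (a) in detail and then indicate the change of notation for (b).

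For (a), I would start by recording that, since $b$ is decreasing, the supervertex $q^*$ is a genuine probability measure on $\L$ supported on $\{\mu_0,\dots,\mu_m\}$ with $q^*(\mu_s)=b'(s)$ (from the description recalled in Section \ref{SS:supervertex and subvertex}). The corresponding product measure on $\L^k$ is then supported on words of the form $\mu_{j_1}\cdots \mu_{j_k}$ with $(j_1,\dots,j_k)\in\{0,\dots,m\}^k$, and
\[
E_{q^*}(F_{\omega-}) \;=\; \sum_{\tau\in \L^k} q^*(\tau)\,F_{\omega-}(\tau) \;=\; \sum_{(j_1,\dots,j_k)\in\{0,\dots,m\}^k}\; \Big(\prod_{l=1}^k b'(j_l)\Big)\, F(\omega\,\mu_{j_1}\cdots \mu_{j_k}),
\]
using that $F_{\omega-}(\tau)=F(\omega\tau)$.

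Next I would group these $(m+1)^k$ tuples by the multiplicities $i_s=\#\{l:j_l=s\}$, which satisfy $i_0+\dots+i_m=k$. Two observations make the sum collapse: the product $\prod_l b'(j_l)$ depends only on these multiplicities and equals $b'(0)^{i_0}\cdots b'(m)^{i_m}$; and the symmetry of $F$ (which asserts invariance under arbitrary reorderings of all $n$ letters of its argument, in particular under permuting the last $k$ positions) implies that $F(\omega\,\mu_{j_1}\cdots \mu_{j_k})$ depends only on the same multiplicities and equals $F(\omega\,\mu_0^{i_0}\cdots \mu_m^{i_m})$. The number of tuples with a given multiplicity vector $(i_0,\dots,i_m)$ is the multinomial coefficient $\frac{k!}{i_0!\cdots i_m!}$, yielding formula (a).

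For (b), the hypothesis $q_*\in P(b)$ (which by Proposition \ref{P:q_* in P(b)} is equivalent to the inequality on $\tfrac{1}{m}\sum b(i)$ appearing in Theorem \ref{T:Fmax Fmin is expectation of product measure}) guarantees that $q_*$ is an honest probability density on $\L$, so the product measure on $\L^k$ is well-defined. Then the exact same argument applies, with $\mu_s$ replaced by $\nu_s$ and $b'(s)$ by $b''(s)$. There is no real obstacle here beyond careful bookkeeping; the whole proposition is essentially the multinomial expansion of $E_{p}(F_{\omega-})$ for a product measure $p$ supported on $m+1$ atoms, combined with the fact that symmetry of $F$ identifies all leaves sharing a common multiset of letters.
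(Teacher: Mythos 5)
Your proof is correct and follows essentially the same route as the paper: expand $E_{q^*}(F_{\omega-})$ (resp.\ $E_{q_*}(F_{\omega-})$) as a sum over words in the support of the product measure, then regroup by multiplicities using the symmetry of $F$ and the multinomial count. The only cosmetic difference is that the paper delegates this regrouping step to Lemma \ref{L:exponential to polynomial}, whereas you carry it out inline.
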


The advantage in $F$ being  symmetric is evident: the complexity of the computation, i.e the number of terms in the sums computing $F_{\max}(\Gamma)$ and $F_{\min}(\Gamma)$, is ${n+m} \choose m$, polynomial in $n$ rather than exponential (take $k=n$ and $\omega$ empty).
These results leave something to be desired, though.
Namely are there any interesting symmetric truncated $\ell$-positive functions?
In addition, the condition $q_* \in P(b)$ is unreasonably strong in practical applications. 

\begin{defn}\label{D:European}
A function $F \colon \L^n \to \RR$ is called {\em European} if there are $\ell$-positive vectors $u_1,\dots,u_r$ such that $u_i=u_i^+$ for all $i$, and numbers $s_1,\dots,s_r, C \geq 0$ such that
\[
F(\lambda_1\cdots \lambda_n) = \Big(\sum_{j=1}^r s_j \cdot  u_j(\lambda_1) \cdots u_j(\lambda_n) - C\Big)^+.
\]
\end{defn}

The terminology is inspired by the Financial Mathematics model in Section \ref{Int:FM motivation}.
European functions exist in abundance as we explain in Section \ref{S:truncated ell positive functions}.
They are symmetric truncated $\ell$-positive by Proposition \ref{P:cooking up F from ui} and therefore $F_{\max}(\Gamma)$ can be computed for them by Theorem \ref{T:Fmax Fmin is expectation of product measure}.
The next theorem gives a lower bound for $F_{\min}(\Gamma)$ for European functions. 
The bound tends to be very crude, though. 

\begin{theorem}\label{T:bounding Fmin below}
Let $P(b)$ be non-empty for some $b \in \RR^m$ and set $\Gamma=\Gamma(\L^n,b)$.
Let $F \colon \L^n \to \RR$ be a European function defined and let $\beta(0),\dots,\beta(m)$ and $\alpha_j(0),\dots,\alpha_j(m)$ where $j=1,\dots,r$ be the numbers in Definition \ref{D:Minimizer of European} associated to $F$.
Then
\[
F_{\min}(\Gamma) \geq
\sum_{k_0+\dots+k_m=n} \frac{n!}{k_0! \cdots k_m!} \cdot \beta(0)^{k_0} \cdots \beta(m)^{k_m} \big(\sum_{j=1}^r s_j \cdot \alpha_j(0)^{k_0} \cdots \alpha_j(m)^{k_m}-C\big)^+.
\]
\end{theorem}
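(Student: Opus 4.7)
The plan is to run a downward induction on the tree $\T$ and apply the subvertex lower bound of Theorem \ref{Intro T:maximum at supervertex explicit} at every level. Crucially, that bound $\min_{x\in P(b)}E_x(u)\geq\langle u,q_*\rangle$ for $u\in(\Ulpos)^+$ does not require $q_*\in P(b)$, and it is precisely this weaker inequality that covers the present setting, where Theorem \ref{T:Fmax Fmin is expectation of product measure} is not available.

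First I would re-express the right-hand side as an iterated $q_*$-expectation. With $\beta(i)$ and $\alpha_j(i)=u_j(\nu_i)$ as furnished by Definition \ref{D:Minimizer of European}, the inner factor $(\sum_j s_j\prod_i\alpha_j(i)^{k_i}-C)^+$ equals $F(\nu_0^{k_0}\cdots\nu_m^{k_m})$, so for each $\omega\in\T_{n-k}$ set
\[
H^{(k)}(\omega)\;=\;\sum_{i_0+\cdots+i_m=k}\binom{k}{i_0,\dots,i_m}\beta(0)^{i_0}\cdots\beta(m)^{i_m}\,F\bigl(\omega\,\nu_0^{i_0}\cdots\nu_m^{i_m}\bigr),
\]
so that $H^{(0)}=F$ and $H^{(n)}(\emptyset)$ is the RHS of the theorem.

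The main claim is the downward induction $F^{(k)}_\Phi(\omega)\geq H^{(k)}(\omega)$ for every $\Phi\colon\T^*\to P(b)$ and every $\omega\in\T_{n-k}$. By monotonicity and the induction hypothesis,
\[
F^{(k)}_\Phi(\omega)\;=\;E_{\Phi(\omega)}\!\bigl(F^{(k-1)}_\Phi|_{\OP{succ}(\omega)}\bigr)\;\geq\;E_{\Phi(\omega)}\!\bigl(H^{(k-1)}|_{\OP{succ}(\omega)}\bigr),
\]
so it suffices to bound the latter below by $H^{(k)}(\omega)$. Viewed as a function of $\lambda\in\L$, the map $\lambda\mapsto H^{(k-1)}(\omega\lambda)$ is a non-negative combination of one-variable slices of $F$; because $F$ is European each such slice is of the form $(\sum_j c_j u_j(\lambda)-C)^+$ with $c_j\geq 0$, hence lies in $(\Ulpos)^+$ (non-negative combinations of $\ell$-positive vectors remain $\ell$-positive, and the constant $-C$ is absorbed into the $\ell_0$-coefficient). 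Therefore Theorem \ref{Intro T:maximum at supervertex explicit} applies at this level and yields
\[
E_{\Phi(\omega)}\!\bigl(H^{(k-1)}|_{\OP{succ}(\omega)}\bigr)\;\geq\;\langle H^{(k-1)}|_{\OP{succ}(\omega)},q_*\rangle\;=\;\sum_{i=0}^{m}\beta(i)\,H^{(k-1)}(\omega\nu_i).
\]
A direct multinomial identity --- namely $\sum_{i:k_i\geq 1}\binom{k-1}{k_0,\dots,k_i-1,\dots,k_m}=\binom{k}{k_0,\dots,k_m}$ combined with $\beta(i)\,\beta(0)^{k_0}\cdots\beta(i)^{k_i-1}\cdots\beta(m)^{k_m}=\beta(0)^{k_0}\cdots\beta(m)^{k_m}$ and the symmetry of $F$ --- collapses the last sum to $H^{(k)}(\omega)$, closing the induction. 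Setting $k=n$ and $\omega=\emptyset$ gives $F_{\min}(\Gamma)\geq H^{(n)}(\emptyset)$, which is the theorem.

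\textbf{Main obstacle.} The delicate point I expect to require most care is the stability verification: that at every level $\lambda\mapsto H^{(k-1)}(\omega\lambda)$ remains in $(\Ulpos)^+$, so that Theorem \ref{Intro T:maximum at supervertex explicit} can be reapplied. This requires both that every one-variable slice of a European function be truncated $\ell$-positive (essentially the content of Proposition \ref{P:cooking up F from ui}) and that the weights $\beta(i)$ in Definition \ref{D:Minimizer of European} be non-negative, so that $H^{(k-1)}$ is manifestly a non-negative combination of truncated $\ell$-positive functions. Once this stability is granted, the remaining combinatorial bookkeeping is mechanical.
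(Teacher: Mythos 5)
There is a genuine gap, and it starts with a misreading of Definition \ref{D:Minimizer of European}: there $\alpha_j(0)=u_j(\nu_0)$ but $\alpha_j(i)=u_j(\nu_i)-u_j(\nu_0)$ for $i\geq 1$, and $\beta(0)=1$, which is in general different from $b''(0)=1-\sum_{i\geq1}b''(i)$. Consequently the inner factor $\big(\sum_j s_j\alpha_j(0)^{k_0}\cdots\alpha_j(m)^{k_m}-C\big)^+$ is \emph{not} $F(\nu_0^{k_0}\cdots\nu_m^{k_m})$, and the right-hand side of the theorem is \emph{not} the iterated subvertex expectation $E_{q_*}(F)$; your $H^{(k)}$ therefore proves (at best) a different statement than the one asserted. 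Moreover the key identity in your induction step fails: with the weights of Definition \ref{D:Minimizer of European} one has $\sum_{i=0}^m\beta(i)H^{(k-1)}(\omega\nu_i)=\langle H^{(k-1)}|_{\OP{succ}(\omega)},q_*\rangle+\big(\sum_{i\geq1}b''(i)\big)H^{(k-1)}(\omega\nu_0)\;\geq\;\langle H^{(k-1)}|_{\OP{succ}(\omega)},q_*\rangle$, so the subvertex bound gives an inequality pointing the wrong way; if instead you use the true weights $b''(0),\dots,b''(m)$ of $q_*$ so that the pairing identity holds, then in the regime the theorem is actually aimed at ($b''(0)<0$, i.e.\ $q_*\notin P(b)$ --- otherwise Theorem \ref{T:Fmax Fmin is expectation of product measure} already gives more) the function $\lambda\mapsto H^{(k-1)}(\omega\lambda)$ is a \emph{signed} combination of truncated $\ell$-positive slices, so it need not lie in $(\Ulpos)^+$ and Theorem \ref{Intro T:maximum at supervertex explicit} cannot be reapplied. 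This is exactly the ``stability'' obstacle you flag at the end, and it is fatal rather than technical.

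The paper's proof has the same skeleton (downward induction, comparing $F^{(k)}_\Phi$ with an auxiliary function, and invoking Corollary \ref{C:sum of ell-pos+ at supervertex} at each level), but the missing idea is the choice of auxiliary function: not the iterated $q_*$-expectation of $F$, but the minimizer $G^{(k)}$ of Definition \ref{D:Minimizer of European}, whose coefficients $\beta(\mathbf i)$ and $\alpha_j(\mathbf i)$ are non-negative by construction (this is why $\beta(0)=1$ and why the $\alpha_j(i)$ are differences), so each slice of $G^{(k)}$ stays in $(\Ulpos)^+$ and the subvertex inequality applies at every level. The price is that $\langle G^{(k)}|_{\OP{succ}(\omega)},q_*\rangle$ no longer reproduces $G^{(k+1)}(\omega)$ on the nose; one needs the extra inequality $\langle G^{(k)}|_{\OP{succ}(\omega)},q_*\rangle\geq G^{(k+1)}(\omega)$ of Lemma \ref{L:<G^(k),q_*> >= g^(k+1)}, which rests on the elementary truncation estimate of Lemma \ref{L:difference of truncated ell positive} applied to the differences $G^{(k)}(\omega\nu_i)-G^{(k)}(\omega\nu_0)$. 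That compensating lemma is what allows the induction to close even when $b''(0)<0$, and it (together with Lemma \ref{L:exponential to polynomial} to pass to the multinomial form, and Proposition \ref{P:F^k as conditional probability} to identify $F^{(n)}_\Phi(\emptyset)$ with $E_{P(\Phi)}(F)$) is absent from your proposal.
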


\begin{void}
\label{Int:FM motivation}
{\bf Financial Mathematics motivation.} 
In this section we describe the problem in Financial Mathematics that has driven this project.
This section is aimed for the non-experts and we will therefore avoid Financial Mathematics jargon and (deliberately) use non-standard terminology.
A full account can be found in \cite{KLS2} and background material in \cite[Chap. 3]{Pliska}.

An example of a {\em discrete time market model} is a finite probability space $(\Omega,P)$ together with a set $\B T=\{0,1,\dots,n\}$ representing (discrete) time.
It is assumed that $P(A)=0$ if and only if $A=\emptyset$.
An {\em asset} is a sequence of random variables $X(0),\dots,X(n)$ indexed by $\B T$ such that $X(0)$ is a constant random variable representing the fact that its price at time $0$ is known.
A discrete market model is specified by assets $X_1,\dots,X_r$, each is a random process indexed by $\B T$. 
One of those assets is assumed to be a {\em bond process}, denoted by $B$. Thus $B(k)$ is the value at time $k\in \B T$ of a unit of money deposited in a savings account at time $k=0$. 
The ratio $r(k) = (B(k)-B(k-1))/B(k-1)$ is the interest rate which in the model we describe below is assumed to be constant, i.e $B(k)=R^k$ for a fixed interest rate $R\geq1$.

A {\em portfolio} is a vector $(x_1,\dots,x_r) \in \RR^r$ and its {\em value} is $V_x=\sum_i x_i X_i$.
A portfolio $x$ is called an {\em arbitrage} if
\begin{equation}\label{E:arbitrage}
\begin{array}{ll}
V_x(0)=0 \\
V_x(n)\geq 0 \\
E(V_x(n))>0.
\end{array}
\end{equation}
It is generally assumed that financial models do not have arbitrage portfolios.
It models an idealisation of reality in which no one should be able to make money
out of nothing with no risk of making loss; see \cite[Chapter 1]{MR2200584}.

For what follows we fix some assets $S_1,\dots,S_m$ which we call shares of stock.
A {\em European call option} is a contract made at time $k=0$ which gives its holder the right, but not an obligation, to buy at time $n$ a portfolio $x=(x_1,\ldots,x_m)$ whose value is $V_x(n)=\sum_i x_i S_i(n)$ for a given price $C$ set in the contract.
If $C<V_x(n)$ then the holder will exercise the option and buy the portfolio (for $C$), sell it for $V_x(n)$ and make a profit $V_x(n)-C$. 
If $C\geq V_x(n)$ the holder will do nothing. 
In other words, at time $n$ the holder of an option will make a profit $F=\max\{V_x(n)-C,0\}$.
The random variable $F$ is called the {\em pay-off}. 

Of course, the option is itself an asset $H$ for which $H(n)=F$.
One of the basic problems in Financial Mathematics is to determine $H(0)$, namely the price of the option at time $k=0$, as well as its values at any given time $k\in \B T$ so that an arbitrage does not occur.
Such a value is called {\em rational}.

A {\em risk-neutral} probability measure $P_*$ on $\Omega$ is a martingale measure with respect to the random processes $S_1,\dots,S_m$ with $P_*(\omega)>0$ for all $\omega \in \Omega)$, \cite[pp. 93]{Pliska}.
That is, the conditional expectation of $S_i(k+\ell)$ given the event that the values of $S_1(t), \dots, S_m(t)$ are known for all $0 \leq t \leq k$ is $R^\ell$ times the known value of $S_i(k)$.
To be more precise, for any $\omega \in \Omega$ consider the event $E_{k,\omega} = \bigcap_{t=0}^k \bigcap_{i=1}^m \{ S_i(t)=S_i(t)(\omega)\}$.
We require that
\begin{equation}\label{Eq:martingale}
E_{P_*}\big(S_i(k+\ell) \ \big| E_{k,\omega} \ \big) = R^\ell S_i(k)(\omega).
\end{equation}
The set $\Gamma_*$ of all the risk-neutral probability measures on $\Omega$ is therefore the interior of a convex bounded polytope equal to the intersection of the simplex of all probability measures on $\Omega$ with the affine subspace defined by the system of linear equations \eqref{Eq:martingale}.
Throughout we assume that $\Gamma_* \neq \emptyset$. 
This is equivalent to the absence of arbitrage portfolios; see \cite[(3.19)]{Pliska} or \cite[Theorem 1.6.1]{MR2200584} for a more general statement.

It is a fundamental result that the rational value $H$ of an option is the conditional expectation of the pay-off with respect to a risk-neutral probability measure \cite[Theorem 2.4.1]{MR2200584}, more precisely
\begin{equation}\label{E:F from f by E*}
H(k)(\omega) = E_{P_*} \big( F \ \big| \ E_{k,\omega} \big).
\end{equation}
The risk-neutral probability $P_*$ is not unique, and therefore neither is $H$.
We will write $H_{P_*}$ for the rational value in \eqref{E:F from f by E*}.
Thus, the set of all rational prices at time $k=0$, namely $\{ E_{P_*}(F) | P_* \in \Gamma_*\}$, forms an open interval $(F_{\min}, F_{\max})$. 
The question that has driven this paper was to find the values of $F_{\min}$ and $F_{\max}$ of a European call option in a model in which the shares $S_i$ follow binomial processes. 

\noindent
{\bf Specification of the model:}
For any $1 \leq i \leq m$ we fix $0<D_i<R<U_i$.
We also fix $S_i(0)>0$, prices at time $0$.
Each $S_i$ follows a binomial process, namely at time $k$ one flips a coin, possibly unfair, 
and according to the result $\epsilon=0,1$ the value of $S_i(k)$ is multiplied by either $D_i$ (if $\epsilon=1$) or by $U_i$ (if $\epsilon=0$).
Thus $S_i(k+1)=S_i(k) \cdot D_i^\epsilon U_i^{1-\epsilon}$.

The sample space suitable to describe this process is $\Omega=(\{0,1\}^m)^n$ which in the notation of Section \ref{SS:polytopes associates to 2m} is $\L^n$. 
Then $S_i(k)(\lambda_1\cdots\lambda_n)=S_i(0) \cdot D_i^{\sum_{j=1}^k\lambda_j(i)}U_i^{k-\sum_{j=1}^k\lambda_j(i)}$. 
The pay-off (at time $n$) in this model is therefore the random variable
\begin{equation}\label{E:pay off in binomial model}
F(\lambda_1\cdots\lambda_n) = \max \left\{ 0 \ , \ \sum_{i=1}^m S_i(0) \cdot D_i^{\sum_{j=1}^n \lambda_j(i)}\cdot U_i^{n-\sum_{j=1}^n \lambda_j(i)} \, - \, C\right\}.
\end{equation}

\begin{prop}\label{P:pay off is ell positive}
In the multi-step binomial model of a European call option described above, the pay-off \eqref{E:pay off in binomial model}  is a European function (in the sense of Definition \ref{D:European}).
\end{prop}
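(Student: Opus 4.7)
The plan is to match the pay-off \eqref{E:pay off in binomial model} term-by-term against Definition \ref{D:European} by producing, for each stock index $j=1,\dots,m$, an explicit vector $u_j \in U$ whose $n$-fold product reproduces the $j$-th summand inside the $\max$. The natural candidate is
\[
u_j(\lambda) \;:=\; D_j^{\lambda(j)}\,U_j^{1-\lambda(j)},
\]
so that $u_j(\lambda) = U_j$ when $\lambda(j) = 0$ and $u_j(\lambda) = D_j$ when $\lambda(j) = 1$. A direct evaluation using $\ell_0(\lambda)=1$ and $\ell_j(\lambda)=(-1)^{\lambda(j)}$ then shows
\[
u_j \;=\; \tfrac{U_j+D_j}{2}\,\ell_0 \;+\; \tfrac{U_j-D_j}{2}\,\ell_j,
\]
which lies in $U$ and has coefficient $(U_j-D_j)/2 > 0$ of $\ell_j$ since $D_j < U_j$. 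As $D_j,U_j>0$, all entries of $u_j$ are strictly positive, so $u_j = u_j^+$.

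Next I would unfold the product along a word $\lambda_1\cdots\lambda_n\in\L^n$:
\[
\prod_{k=1}^n u_j(\lambda_k) \;=\; \prod_{k=1}^n D_j^{\lambda_k(j)} U_j^{1-\lambda_k(j)} \;=\; D_j^{\sum_{k=1}^n \lambda_k(j)}\,U_j^{\,n-\sum_{k=1}^n \lambda_k(j)},
\]
which is exactly the factor appearing in the $j$-th summand of \eqref{E:pay off in binomial model}. Setting $s_j := S_j(0) \geq 0$, $r := m$, and keeping the same constant $C \geq 0$ then rewrites the pay-off as
\[
F(\lambda_1\cdots\lambda_n) \;=\; \Big(\sum_{j=1}^m s_j\cdot u_j(\lambda_1)\cdots u_j(\lambda_n) - C\Big)^+,
\]
which is precisely the form required by Definition \ref{D:European}.

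The only step requiring care is the $\ell$-positivity clause in Definition \ref{D:ell-positive vectors}, which asks for $a_1,\dots,a_m>0$: in the representation of $u_j$ above only $a_j$ is strictly positive while the other $a_i$ ($i\neq 0,j$) vanish. I would resolve this by a standard perturbation: replace $u_j$ by $u_j^{(\varepsilon)} := u_j + \varepsilon\sum_{i\neq j}\ell_i$, which is strictly $\ell$-positive and still pointwise positive (hence equals its own truncation) for sufficiently small $\varepsilon > 0$; since $F$ is obtained as the pointwise limit of the corresponding European functions $F^{(\varepsilon)}$ as $\varepsilon\to 0^+$, and the collection of European functions is closed under such limits in the relevant sense, the conclusion follows. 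This perturbation is the only substantive issue; the structural identification is a direct computation.
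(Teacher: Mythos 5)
Your construction is exactly the paper's. The paper's proof of Proposition \ref{P:pay off is ell positive} is one line: apply Definition \ref{D:European} to the vectors $u_1,\dots,u_m$ of Proposition \ref{P:ui from Di Ui} with $s_j=S_j(0)$, and those are precisely the vectors you wrote down, $u_j(\lambda)=D_j^{\lambda(j)}U_j^{1-\lambda(j)}=\tfrac{U_j+D_j}{2}\ell_0+\tfrac{U_j-D_j}{2}\ell_j$, with the same unfolding of the $n$-fold product and the same choice $s_j=S_j(0)$, $u_j=u_j^+$. Note that the paper simply asserts, in Proposition \ref{P:ui from Di Ui}, that $u_j\in\Ulpos$.

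The gap is your final step. You are right that, read literally, Definition \ref{D:ell-positive vectors} requires $a_1,\dots,a_m>0$, whereas $u_j$ has $a_i=0$ for $i\neq 0,j$; but the perturbation-and-limit repair is not a proof. The assertion that the class of European functions is closed under pointwise limits appears nowhere in the paper and cannot simply be invoked: being European demands an \emph{exact} representation by strictly $\ell$-positive, entrywise nonnegative vectors, and as $\varepsilon\to 0^+$ your representations converge precisely to the degenerate representation you are trying to avoid, so the argument is circular and produces no admissible representation of $F$. There are two honest ways to settle the point. (i) Do what the paper does and accept Proposition \ref{P:ui from Di Ui}; this is harmless because the strict inequalities in Definition \ref{D:ell-positive vectors} are never used in an essential way --- the proofs of Theorems \ref{T:max at supervertex} and \ref{T:min at subvertex} only need the coefficients appearing in Lemmas \ref{L:ell-positive in ell'} and \ref{L:ell-positive in ell''} to be weakly monotone, respectively nonnegative, so relaxing the definition to $a_i\ge 0$ changes nothing downstream. (ii) Verify directly the property the Proposition is actually used for (Proposition \ref{P:cooking up F from ui}): each slice $\lambda\mapsto F(\omega\lambda\tau)$ is the truncation of $\sum_{j=1}^m S_j(0)\,u_j(\omega)u_j(\tau)\,u_j-C\ell_0$ (with $u_j(\omega)$ the product of values along the word $\omega$ as in \ref{V:Notation evaluation on words}), and this vector \emph{is} strictly $\ell$-positive, since each $\ell_i$, $1\le i\le m$, receives the coefficient $S_i(0)u_i(\omega)u_i(\tau)\tfrac{U_i-D_i}{2}>0$ once all $m$ stocks are summed (here $S_i(0)>0$ is used); this gives the truncated $\ell$-positivity of the pay-off without any quibble, while the literal statement that $F$ is European rests, in the paper, on the reading of $\ell$-positivity in Proposition \ref{P:ui from Di Ui}. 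Either route closes the issue; a pointwise-limit argument does not.
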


It is clear that for any $\lambda_1\dots\lambda_n$ in the sample space $\L^n$ the value of $S_i(k)$ is determined by $\omega=\lambda_1\dots\lambda_k$.
Thus, in the notation of Section \ref{SS:extension F tree}, $S_1(k),\dots,S_m(k)$ are constant on the events $A_\omega$, and hence so is $H(k)$.
In addition, it is easily verified that $\omega$ is determined by the values of $S_1(t),\dots,S_m(t)$ where $0 \leq t \leq k$.
Therefore $E_{k,\lambda_1\dots\lambda_n}=A_\omega$. 
Also, one checks that equations \eqref{Eq:martingale} for $\ell \geq 2$ are a consequence of those with $\ell=1$.
From the latter equations one checks that $P_*$ solves \eqref{Eq:martingale} and has no null-sets if and only if, with the notation of \eqref{D:P-Phi}, $P_*=P(\Phi)$ where the values of $\Phi \colon \T^* \to \Delta(\L)$ are in the set $Q \subseteq \Delta(\L)$ of the solutions of the system 
\begin{equation}\label{E:one step martingale}
E_x(S_i(1))=RS_i(0), \qquad \text{$1 \leq i \leq m$ and $x \in \Delta(\L)$ and $x(\lambda)>0$.}
\end{equation}
Then $Q$ is the interior of the polytope in $\Delta(\L)$ which is the preimage of $(R,\dots,R) \in \RR^m$ under a linear map that sends the vertices $e_\lambda$ of $\Delta(\L)$ to the vertices of the cube $[D_1,U_1] \times \dots \times [D_m,U_m]$ in $\RR^m$.
One checks, as we do in \cite{KLS2}, that $Q$ is the interior of $P(b)$ from Section \ref{SS:polytopes associates to 2m} where 
\begin{equation}\label{E:b from U R D}
b(i)=\frac{2R - U_i-D_i}{U_i-D_i}
\end{equation}
and that  $|b(i)| \leq 1$ by the assumption that $D_i<R<U_i$, so $P(b) \neq \emptyset$. 
The crux is now that
\[
\overline{\Gamma_*} = \Gamma(\L^n,b).
\]
By possibly reordering the shares $S_i$ we can ensure that $b$ is decreasing, i.e $b(1) \geq \dots \geq b(m)$.
Recall from Section \ref{SS:supervertex and subvertex} the description of the supervertex and the subvertex of $P(b)$.
We are now able to describe the interval of rational values of a European call option in this model.

\begin{theorem}\label{T:pay off rational values}
Let $F$ be the pay-off in \eqref{E:pay off in binomial model} and $H$ its rational value.
Consider some $\omega=\lambda_1\dots\lambda_n \in \L^n$ and $1 \leq k \leq n$.
Set $\theta=\lambda_1 \dots \lambda_{n-k}$.
Then 
\[
\sup_{P_* \in \Gamma_*} H_{P_*}(n-k)(\omega) = E_{q^*}(F_{\theta-}) = 
\sum_{i_0+\dots+i_m=k} \frac{k!}{i_0!\cdots i_m!} \cdot b'(0)^{i_0}\cdots b'(m)^{i_m} \cdot  F(\theta \mu_0^{i_0} \cdots \mu_m^{i_m}).
\]
If $\tfrac{1}{m}\sum_{i=1}^m b(i) \leq \tfrac{2}{m}-1$ then 
\[
\inf_{P_* \in \Gamma_*} H_{P_*}(n-k)(\omega) = E_{q_*}(F_{\theta-}) = 
\sum_{i_0+\dots+i_m=k} \frac{k!}{i_0!\cdots i_m!}\cdot  b''(0)^{i_0}\cdots b''(m)^{i_m} \cdot F(\theta \nu_0^{i_0} \cdots \nu_m^{i_m}).
\]
\end{theorem}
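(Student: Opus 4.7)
The plan is to derive this theorem directly from Proposition \ref{P:pay off is ell positive}, Theorem \ref{T:Fmax Fmin is expectation of product measure} and Proposition \ref{P:symmetric polynomial complexity}, together with the identification $\overline{\Gamma_*} = \Gamma(\L^n,b)$ recorded in the preceding paragraph. First, combining \eqref{E:F from f by E*} with the identity $E_{n-k,\omega} = A_\theta$ (where $\theta = \lambda_1 \cdots \lambda_{n-k}$) gives $H_{P_*}(n-k)(\omega) = E_{P_*}(F \mid A_\theta)$ for every $P_* \in \Gamma_*$. Because $P_*$ is strictly positive on $\L^n$, the hypothesis $P_*(A_\theta) > 0$ of Theorem \ref{T:Fmax Fmin is expectation of product measure} holds automatically. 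By Proposition \ref{P:pay off is ell positive} the pay-off $F$ is a European function, and Proposition \ref{P:cooking up F from ui} says every such function is symmetric and truncated $\ell$-positive; so Theorem \ref{T:Fmax Fmin is expectation of product measure} applies and yields the upper bound $H_{P_*}(n-k)(\omega) \leq E_{q^*}(F_{\theta-})$ uniformly in $P_* \in \Gamma_*$.

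To see that this bound is sharp, I will consider the constant function $\Phi^* \equiv q^*$ on $\T^*$, giving a product measure $P^* = P(\Phi^*) \in \Gamma(\L^n,b) = \overline{\Gamma_*}$. Because $P^*$ is a product, the distribution of the last $k$ coordinates conditional on $A_\theta$ is again the product $q^*$ on $\L^k$, so $E_{P^*}(F \mid A_\theta) = E_{q^*}(F_{\theta-})$. The measure $P^*$ may lie on the boundary of $\Gamma(\L^n,b)$ rather than in $\Gamma_*$, so I will approximate it by $P_\varepsilon \in \Gamma_*$ with $P_\varepsilon \to P^*$; since $F$ is bounded and $P \mapsto E_P(F \mid A_\theta)$ is continuous on $\{P : P(A_\theta) > 0\}$, it follows that $E_{P_\varepsilon}(F \mid A_\theta) \to E_{q^*}(F_{\theta-})$, and the supremum equals $E_{q^*}(F_{\theta-})$. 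The multinomial expression on the right is then exactly the specialisation of Proposition \ref{P:symmetric polynomial complexity}(a) to our $F$, applicable because the shares have been reordered so that $b$ is decreasing.

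The argument for the infimum is entirely parallel: the hypothesis $\tfrac{1}{m}\sum_i b(i) \leq \tfrac{2}{m}-1$ is exactly the condition in Theorem \ref{T:Fmax Fmin is expectation of product measure} forcing $q_* \in P(b)$, which then gives the matching lower bound $H_{P_*}(n-k)(\omega) \geq E_{q_*}(F_{\theta-})$, attained in the limit by the product measure associated to $\Phi_* \equiv q_*$, with Proposition \ref{P:symmetric polynomial complexity}(b) converting this into the explicit multinomial sum. The only step beyond quotation is the density argument justifying the passage from $\sup_{\Gamma_*}$ and $\inf_{\Gamma_*}$ to $\sup$ and $\inf$ over $\overline{\Gamma_*}$; I expect this to be the main place requiring care, but it is essentially immediate because $F$ is bounded and the approximating product measures can be perturbed slightly into the interior of $\Gamma(\L^n,b)$ without causing $P(A_\theta)$ to collapse.
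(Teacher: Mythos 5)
Your upper and lower bounds are correct and follow the paper's own route: for $P_*\in\Gamma_*$ one has $P_*=P(\Phi)$ with $\Phi$ valued in the interior of $P(b)$, the identification $E_{n-k,\omega}=A_\theta$ together with \eqref{E:F from f by E*} gives $H_{P_*}(n-k)(\omega)=E_{P_*}(F|A_\theta)$, and then Theorem \ref{T:Fmax Fmin is expectation of product measure} (the paper phrases this via Propositions \ref{P:F^k as conditional probability} and \ref{P:F^k_Phi bounded by F^k_q}) bounds this above by $E_{q^*}(F_{\theta-})$ and, when $q_*\in P(b)$, below by $E_{q_*}(F_{\theta-})$; Proposition \ref{P:symmetric polynomial complexity} converts these into the multinomial formulas, as you say.

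The gap is in your sharpness step. The supervertex $q^*$ is supported in $\{\mu_0,\dots,\mu_m\}$, whereas $\theta=\lambda_1\cdots\lambda_{n-k}$ is a prefix of an \emph{arbitrary} $\omega\in\L^n$, so for the product measure $P^*=P(\Phi^*)$ with $\Phi^*\equiv q^*$ one typically has $P^*(A_\theta)=\prod_j q^*(\lambda_j)=0$. Then $E_{P^*}(F|A_\theta)$ is not even defined, and your appeal to continuity of $P\mapsto E_P(F|A_\theta)$ on $\{P: P(A_\theta)>0\}$ gives no information about limits along $P_\varepsilon\to P^*$: that map is the ratio $E_P(F\mathbf{1}_{A_\theta})/P(A_\theta)$, whose numerator and denominator both tend to $0$, and for a general sequence $P_\varepsilon\in\Gamma_*$ converging to $P^*$ the conditional expectations need not converge to $E_{q^*}(F_{\theta-})$ (they need not converge at all). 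The repair — which is exactly what the paper means by ``$q^*$ and $q_*$ are limit points of the interior of $P(b)$'' — is to use approximating measures that are products of a \emph{constant interior} value: choose $q_\varepsilon$ in the interior of $P(b)$ (non-empty since $\Gamma_*\neq\emptyset$) with $q_\varepsilon\to q^*$ and set $\Phi_\varepsilon\equiv q_\varepsilon$. Then $P(\Phi_\varepsilon)\in\Gamma_*$, $P(\Phi_\varepsilon)(A_\theta)>0$, and Propositions \ref{P:F^k as conditional probability} and \ref{P:P(Phi) and its properties}(\ref{Item:P:P(Phi) constant}) give the exact identity $E_{P(\Phi_\varepsilon)}(F|A_\theta)=E_{q_\varepsilon}(F_{\theta-})$, which is a polynomial in the entries of $q_\varepsilon$ and hence converges to $E_{q^*}(F_{\theta-})$; this, not continuity of conditioning at $P^*$, is what yields the supremum. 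The same correction is needed for the infimum, since $\supp(q_*)\subseteq\{\nu_0,\dots,\nu_m\}$ and so $P(\Phi_*)(A_\theta)$ is again typically $0$.
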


Hence, the maximal value of the pay-off at time $0$ in the $n$-step model, $F_{\max}$, is computed by a product measure obtained from a  martingale measure which can be computed explicitly from the parameters of the model.
Under some assumptions the same holds for $F_{\min}$.
These results generalise ones obtained in \cite{MR2283328} 
when $m=2$ (in which case $\dim P(b) \leq 1$ namely it is generically an interval).
This is not only a surprising result, but also has significant practical consequences since it dramatically reduces the computational complexity of $F_{\max}$ to $O(n^m)$.
\end{void}

\subsection*{Acknowledgements} 
I thank Jarek Kedra for an abundance of helpful discussions and ideas.
We both thank Victoria Steblovskaya for drawing our attention to this circle of problems and for discussions.

\section{The poset $\L$, the vectors $\ell_i$ and the polytopes $P(b)$}

\begin{void}\label{SS:vector spaces}
Given a finite set $\Omega$ let $\RR^\Omega$ denote the linear space of functions $x \colon \Omega \to \RR$ equipped with the standard basis $\{e_\omega\}_{\omega \in \Omega}$  and the standard inner product $\langle x,y\rangle = \sum_{\omega \in \Omega} x(\omega)y(\omega)$.
The {\em support} of $x \in \RR^\Omega$ is 
\[
\supp(x) = \{ \omega \in \Omega : x(\omega)  \neq 0 \}.
\]
The unit simplex $\Delta(\Omega)$ in $\RR^\Omega$ is the set of all probability density functions on $\Omega$
\[
\Delta(\Omega) = \left\{ x \in \RR^\Omega \ : \ \sum_{\omega \in \Omega} x(\omega)=1 \text{ and } x(\omega) \geq 0 \right\}.
\]
The {\em truncation} of $x \in \RR^\Omega$ is the vector $x^+ \in \RR^\Omega$ defined by $x^+(\omega)=x(\omega)^+=\max\{ x(\omega),0\}$.

For $m \geq 1$ set $[m]=\{1,2,\dots,m\}$ and $[m]_0=\{0,1,\dots,m\}$.
Throughout we will identify $\RR^m$ with $\RR^{[m]}$ and $\RR^{m+1}$ with $\RR^{[m]_0}$.
\end{void}

\begin{void}\label{SS:L}
{\em The poset $\L$.}
Let $\L$ denote the set of functions $\lambda \colon [m] \to \{0,1\}$ identified with the poset $\wp([m])$.
Thus, $\lambda \preceq \lambda'$ if $\supp(\lambda) \subseteq \supp(\lambda')$. 
It will be convenient to regard $\lambda$ as having domain $\{0,\dots,m+1\}$ and agree throughout that
\[
\lambda(0)=0, \quad \text{ and} \qquad \lambda(m+1)=1.
\]
\end{void}

\begin{void}\label{SS:action Sigma_m on R^L}
{\em Action of the symmetric group $\Sigma_m$.}
Let $\Sigma_m$ act on $[m]$ in the natural way.
Any $\sigma \in \Sigma_m$ gives rise to functions $\sigma_* \colon \L \to \L$ and  $\sigma_* \colon \RR^{m+1} \to \RR^{m+1}$, both abusively denoted by $\sigma_*$, defined by
\[
\sigma_*(\lambda) = \lambda \circ \sigma^{-1} \qquad \text{ and } \qquad \sigma_*(b) = b \circ \sigma^{-1}
\]
with the understanding that $\sigma$ acts on $[m]_0$ by fixing $0$ so $\sigma_*$ acts on $\RR^{m+1}$ by fixing the $0$th entry.
In turn, we obtain $(\sigma_*)_* \colon \RR^\L \to \RR^\L$ which we abusively also denote by $\sigma_*$ 
\[
\sigma_*(x)(\lambda) = x(\sigma_*{}^{-1}(\lambda))=x(\lambda \circ \sigma).
\]
Thus, every $\sigma \in \Sigma_m$ acts on $\RR^\L$ as a permutation matrix, hence an orthogonal transformation.
\end{void}

\begin{defn}\label{D:ell_i and U}
{\em (The vectors $\ell_0,\dots,\ell_m$).}
For every $0 \leq i \leq m$ let $\ell_i \in \RR^\L$ be the vector
\[
\ell_i(\lambda)=(-1)^{\lambda(i)}.
\]
Let $U$ be the subspace of $\RR^\L$ spanned by $\ell_0,\dots,\ell_m$.
\end{defn}

Notice that with the convention $\lambda(0)=0$ in Section \ref{SS:L}, $\ell_0$ is the constant function with value $1$.  
See Example \ref{EX:L and L'} where the rows of the matrix $L$ are the vectors $\ell_0,\dots,\ell_m$ for $m=4$.
It is an elementary exercise to verify that $\ell_0,\dots,\ell_m$ is an orthogonal system with respect to the standard inner product in $\RR^\L$, indeed $\langle \ell_i,\ell_j \rangle = 2^m \delta_{i,j}$.

\begin{prop}\label{P:Sigma_m permutes ell_i}
$\Sigma_m$ permutes $\ell_0,\dots,\ell_m$ in the natural way and leaves $\ell_0$ fixed.
That is, $\sigma_*(\ell_0)=\ell_0$ for any $\sigma \in \Sigma_m$ and for any $1 \leq i \leq m$
\[
\sigma_*(\ell_i) =\ell_{\sigma(i)}.
\]
\end{prop}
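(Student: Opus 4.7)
The plan is to carry out a direct unwinding of the definitions of $\sigma_*$ on $\RR^\L$ and of $\ell_i$; there is really only one computation to do and no serious obstacle.

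First I would handle $\ell_0$ separately. By the convention $\lambda(0)=0$ in Section \ref{SS:L}, we have $\ell_0(\lambda)=(-1)^0=1$ for every $\lambda\in\L$, so $\ell_0$ is the all-ones function. Since $\sigma$ acts on $[m]_0$ fixing $0$, the composition $\lambda\circ\sigma$ still satisfies $(\lambda\circ\sigma)(0)=\lambda(0)=0$, and in particular $\ell_0(\lambda\circ\sigma)=1=\ell_0(\lambda)$. By the formula $\sigma_*(x)(\lambda)=x(\lambda\circ\sigma)$ recalled in Section \ref{SS:action Sigma_m on R^L}, this gives $\sigma_*(\ell_0)=\ell_0$.

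For $1\le i\le m$, I would just evaluate $\sigma_*(\ell_i)$ at an arbitrary $\lambda\in\L$: by the definition of $\sigma_*$ and of $\ell_i$,
\[
\sigma_*(\ell_i)(\lambda) \;=\; \ell_i(\lambda\circ\sigma) \;=\; (-1)^{(\lambda\circ\sigma)(i)} \;=\; (-1)^{\lambda(\sigma(i))} \;=\; \ell_{\sigma(i)}(\lambda).
\]
Since $\lambda$ was arbitrary, $\sigma_*(\ell_i)=\ell_{\sigma(i)}$, as claimed.

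The two computations together show that the set $\{\ell_0,\dots,\ell_m\}$ is preserved by the $\Sigma_m$-action and that the action permutes $\ell_1,\dots,\ell_m$ via the natural permutation of indices while fixing $\ell_0$. There is no real obstacle here; the only point worth being careful about is the bookkeeping of the convention that $\sigma$ fixes $0\in[m]_0$, which is exactly what makes $\ell_0$ a fixed vector rather than just permuted among the $\ell_i$'s.
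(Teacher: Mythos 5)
Your proposal is correct and follows essentially the same route as the paper's own (one-line) proof: evaluate $\sigma_*(\ell_i)(\lambda)=\ell_i(\lambda\circ\sigma)=(-1)^{(\lambda\circ\sigma)(i)}=\ell_{\sigma(i)}(\lambda)$. Your separate treatment of $\ell_0$ via the convention that $\sigma$ fixes $0$ is a harmless elaboration of the same computation.
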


\begin{proof}
$\sigma_*(\ell_i)(\lambda) = \ell_i(\lambda\circ\sigma) = (-1)^{(\lambda\circ\sigma)(i)} =\ell_{\sigma(i)}(\lambda)$ for all $\lambda \in \L$.
\end{proof}

By definition of the unit simplex, $x \in \Delta(\L)$ if and only if $\langle \ell_0,x \rangle =1$ and $x(\lambda) \geq 0$ for all $\lambda \in \L$.
This justifies the following definition.

%
%
%

\begin{defn}\label{D:P(b)}
{\em (The polytopes $P(b)$).}
Let $L \colon \RR^\L \to \RR^{m+1}$ be the linear transformation
\[
L(x)(i) = \langle \ell_i, x\rangle, \qquad (0 \leq i \leq m). 
\]
For any $b \in \RR^m$ 
let $P(b) \subseteq \Delta(\L)$ be the polytope
\[
P(b) \, = \, L^{-1}\left((\begin{smallmatrix}1\\b\end{smallmatrix})\right) \, \cap \, \{ x \in \RR^\L : x(\lambda) \geq 0\}.
\]
\end{defn}

Notice that $P(b)$ is the intersection of an affine subspace of $\RR^\L$ with half-space, hence it is a polytope \cite[\S 8]{MR683612}.
Also, $L(e_\lambda)=(\ell_0(\lambda), \dots,\ell_m(\lambda))$ are the vertices of the $m$-dimensional cube $\{1\} \times [-1,1]^m$ in $\RR^{m+1}$.
Therefore, as in Section \ref{SS:polytopes associates to 2m}, $L$ restricts to a surjective linear map of polytopes $\Lambda \colon \Delta(\L) \to [-1,1]^m$ and $P(b)=\Lambda^{-1}(b)$.
The next Proposition follows.

\begin{prop}\label{P:non empty P(b)}
Any two polytopes $P(b)$ and $P(b')$ are either equal or disjoint.
The polytope $P(b)$ is not empty if and only if $\|b\|_\infty \leq 1$, namely $|b(i)| \leq 1$ for all $1 \leq i \leq m$.
\hfill $\Box$
\end{prop}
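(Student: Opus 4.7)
The proposition has two parts, and my plan is to dispatch the first from the definition and the second by observing that the map $\Lambda$ is surjective onto $[-1,1]^m$.

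For the first claim, I would simply note that $P(b)$ is, by Definition \ref{D:P(b)}, the preimage $L^{-1}((1,b)^T)$ intersected with the non-negative orthant, or equivalently $\Lambda^{-1}(b) \subseteq \Delta(\L)$. Since $\Lambda$ is a single-valued function, its fibres over distinct points in $\RR^m$ are disjoint and its fibre over a given point is uniquely determined. Hence $P(b) = P(b')$ if $b = b'$ and $P(b) \cap P(b') = \emptyset$ otherwise. Nothing more is needed.

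For the second claim, the forward direction is an immediate $\ell^\infty$-estimate: if $x \in P(b)$, then $x \in \Delta(\L)$, so $x(\lambda) \geq 0$ and $\sum_\lambda x(\lambda) = 1$, while $\ell_i(\lambda) = (-1)^{\lambda(i)} \in \{-1,+1\}$. Therefore
\[
|b(i)| = |\langle \ell_i, x\rangle| = \Bigl|\sum_{\lambda \in \L}(-1)^{\lambda(i)} x(\lambda)\Bigr| \leq \sum_{\lambda \in \L} x(\lambda) = 1,
\]
so $\|b\|_\infty \leq 1$.

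For the converse, I would invoke the observation already recorded in the text just before Proposition \ref{P:non empty P(b)}: the linear map $L$ sends each vertex $e_\lambda$ of $\Delta(\L)$ to the vertex $(1, (-1)^{\lambda(1)}, \dots, (-1)^{\lambda(m)})$ of the cube $\{1\} \times [-1,1]^m$, and since a linear map carries a polytope onto the convex hull of the images of its vertices, $\Lambda(\Delta(\L))$ equals the convex hull of $\{\pm 1\}^m$, which is exactly $[-1,1]^m$. Therefore for any $b$ with $\|b\|_\infty \leq 1$ the fibre $P(b) = \Lambda^{-1}(b)$ is non-empty. There is no real obstacle here: the substance of the statement is essentially absorbed into the surjectivity of $\Lambda$, which in turn is the standard fact that an affine map of polytopes is determined on vertices.
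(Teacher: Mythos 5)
Your proposal is correct and follows essentially the paper's own route: the first claim is just that fibres of $\Lambda$ (equivalently of $L$) over distinct points are disjoint, and the non-emptiness criterion comes from $P(b)=\Lambda^{-1}(b)$ together with $L(\Delta(\L))$ being the convex hull of the points $L(e_\lambda)=(1,\pm 1,\dots,\pm 1)$, i.e.\ $\{1\}\times[-1,1]^m$. Your direct $\ell^\infty$ estimate for the forward implication is only a minor elementary variant of reading that same containment off the image $L(\Delta(\L))$, so the two arguments are essentially identical.
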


\begin{prop}\label{P:P(b)-->P(sigma(b))}
For any $\sigma \in \Sigma_m$ and any $b \in \RR^m$ the linear map $\sigma_* \colon \RR^\L \to \RR^\L$ restricts to an isomorphism of polytopes $\sigma_* \colon P(b) \to P(\sigma_*(b))$.
\end{prop}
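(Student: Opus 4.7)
The plan is to verify the two defining conditions of $P(b)$---that $L(x) = \left(\begin{smallmatrix}1\\b\end{smallmatrix}\right)$ and that $x$ has non-negative coordinates---are transported by $\sigma_*$ to the corresponding conditions for $P(\sigma_*(b))$, and then to note that linearity together with the obvious inverse $(\sigma^{-1})_*$ make $\sigma_*$ an isomorphism of polytopes.

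First I would observe that, because $\sigma_*$ acts on $\RR^\L$ by permuting the standard basis $\{e_\lambda\}_{\lambda \in \L}$ (via $e_\lambda \mapsto e_{\sigma_*(\lambda)}$), non-negativity of coordinates is clearly preserved: $\sigma_*(x)(\lambda) = x(\lambda \circ \sigma) \geq 0$ for all $\lambda \in \L$ whenever $x \geq 0$. So $\sigma_*$ maps the positive orthant to itself.

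Next I would compute $L(\sigma_*(x))$. Using the fact that $\sigma_*$ is orthogonal (it is a permutation matrix, as noted in Section \ref{SS:action Sigma_m on R^L}) together with Proposition \ref{P:Sigma_m permutes ell_i}, for each $0 \leq i \leq m$
\[
L(\sigma_*(x))(i) \;=\; \langle \ell_i, \sigma_*(x)\rangle \;=\; \langle \sigma_*^{-1}(\ell_i), x\rangle \;=\; \langle \ell_{\sigma^{-1}(i)}, x\rangle \;=\; L(x)(\sigma^{-1}(i)),
\]
where for $i=0$ we use that $\sigma_*$ fixes $\ell_0$, and for $i \geq 1$ that $\sigma_*^{-1}(\ell_i) = \ell_{\sigma^{-1}(i)}$. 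Hence if $L(x) = \left(\begin{smallmatrix}1\\b\end{smallmatrix}\right)$, then $L(\sigma_*(x))(0)=1$ and for $i \geq 1$ we have $L(\sigma_*(x))(i) = b(\sigma^{-1}(i)) = \sigma_*(b)(i)$, i.e.\ $L(\sigma_*(x)) = \left(\begin{smallmatrix}1\\\sigma_*(b)\end{smallmatrix}\right)$. Combined with the preservation of non-negativity, this shows $\sigma_*(P(b)) \subseteq P(\sigma_*(b))$.

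Finally, since $\sigma_*$ is linear and $(\sigma^{-1})_* = (\sigma_*)^{-1}$ is its two-sided inverse, the same argument applied to $\sigma^{-1}$ and $\sigma_*(b)$ gives $(\sigma^{-1})_*(P(\sigma_*(b))) \subseteq P(b)$, so the inclusion above is in fact an equality and $\sigma_*$ restricts to a linear isomorphism $P(b) \to P(\sigma_*(b))$, which is therefore an isomorphism of polytopes. There is no genuine obstacle here; the proof is a direct unwinding of definitions once Proposition \ref{P:Sigma_m permutes ell_i} is in hand.
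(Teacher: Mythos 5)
Your proof is correct and follows essentially the same route as the paper: the key computation $\langle \ell_i,\sigma_*(x)\rangle=\langle \sigma_*^{-1}\ell_i,x\rangle=\langle \ell_{\sigma^{-1}(i)},x\rangle=\sigma_*(b)(i)$, resting on orthogonality of $\sigma_*$ and Proposition \ref{P:Sigma_m permutes ell_i}, is exactly the paper's argument. You merely spell out the preservation of non-negativity and the use of $(\sigma^{-1})_*$ to upgrade the inclusion to an equality, which the paper dismisses with ``it easily follows.''
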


\begin{proof}
Regard $b$ as a vector in $\RR^{m+1}$ with $b(0)=1$.
Since $\Sigma_m$ acts by orthogonal transformations on $\RR^\L$, for any $x \in P(b)$ 
\[
\langle \ell_i , \sigma_*(x) \rangle =
\langle \sigma_*^{-1} \ell_i , x \rangle =
\langle \ell_{\sigma^{-1}(i)} , x \rangle =
b(\sigma^{-1}(i))=
\sigma_*(b)(i).
\]
It easily follows that $\sigma_*(P(b)) = P(\sigma_*(b))$.
\end{proof}

Set $\mathcal{P}=\{ P(b)  : \text{$P(b)$ is not empty}\}$.
Propositions \ref{P:non empty P(b)} and \ref{P:P(b)-->P(sigma(b))} readily imply

\begin{cor}\label{C:Sigma equivariant action on P} 
The assignment $b \mapsto P(b)$ induces a $\Sigma_m$-equivariant bijection $[-1,1]^m \cong \mathcal{P}$.
\hfill $\Box$
\end{cor}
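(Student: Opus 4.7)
The plan is to read this off as a direct bookkeeping consequence of the two propositions immediately preceding it. First I would observe that by Proposition \ref{P:non empty P(b)}, the parameters $b$ yielding a non-empty polytope are precisely those in $[-1,1]^m$, so the assignment $b \mapsto P(b)$ restricts to a map $[-1,1]^m \to \mathcal{P}$ that is surjective by the very definition of $\mathcal{P}$.

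The next step is injectivity. Suppose $b, b' \in [-1,1]^m$ satisfy $P(b) = P(b')$. Since this common polytope is non-empty, I can pick any $x \in P(b) = P(b')$. By Definition \ref{D:P(b)},
\[
b(i) \;=\; \langle \ell_i, x\rangle \;=\; b'(i), \qquad (1 \leq i \leq m),
\]
so $b=b'$. Hence $b \mapsto P(b)$ is a bijection from $[-1,1]^m$ to $\mathcal{P}$.

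For $\Sigma_m$-equivariance I would equip $\mathcal{P}$ with the $\Sigma_m$-action inherited from the orthogonal action of $\Sigma_m$ on $\RR^\L$, namely $\sigma \cdot P(b) \defeq \sigma_*(P(b))$, and recall that $\Sigma_m$ acts on $[-1,1]^m \subseteq \RR^m$ by $b \mapsto \sigma_*(b) = b \circ \sigma^{-1}$. Then Proposition \ref{P:P(b)-->P(sigma(b))} says exactly that $\sigma_*(P(b)) = P(\sigma_*(b))$, which is equivariance of the bijection. This also shows that the action of $\Sigma_m$ on $\mathcal{P}$ is well-defined (i.e.\ sends elements of $\mathcal{P}$ to elements of $\mathcal{P}$), since the image of a non-empty polytope under $\sigma_*$ is non-empty.

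There is no real obstacle here; the only mild subtlety is making explicit which $\Sigma_m$-action on $\mathcal{P}$ is being used, and verifying that both $[-1,1]^m$ and $\mathcal{P}$ are genuine $\Sigma_m$-sets before invoking Proposition \ref{P:P(b)-->P(sigma(b))}. Everything else is formal.
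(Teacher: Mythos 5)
Your proposal is correct and follows exactly the route the paper intends: the corollary is stated as an immediate consequence of Propositions \ref{P:non empty P(b)} and \ref{P:P(b)-->P(sigma(b))}, and you simply spell out the surjectivity, injectivity (which also follows from the ``equal or disjoint'' clause of Proposition \ref{P:non empty P(b)}, though your direct evaluation $\langle \ell_i,x\rangle=b(i)$ is the same computation), and equivariance. Nothing is missing.
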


%
%
\section{The supervertex of $P(b)$}
\label{S:supervertex}

Recall the vectors $\ell_0,\dots,\ell_m$ and the subspace $U$ from Definition \ref{D:ell_i and U}.

\begin{defn}\label{D:define ell'} 
Let $\ell'_0,\dots,\ell'_m \in \RR^\L$ be the vectors
\begin{eqnarray*}
&& \ell'_i = \tfrac{1}{2} (\ell_i - \ell_{i+1}), \qquad 0 \leq i \leq m-1 \\
&& \ell'_m = \tfrac{1}{2} (\ell_0 + \ell_m).
\end{eqnarray*}
\end{defn}

One checks that $\ell_k = -\sum_{i=0}^{k-1} \ell_i' + \sum_{i=k}^m \ell_i'$ for every $0 \leq k \leq m$, thus $\ell_0',\dots,\ell'_m$ is a basis for $U$.


\begin{defn}\label{D:bprime}
Given $b \in \RR^m$ write $b(0)=1$ and $b(m+1)=-1$.
Let $b' \in \RR^{m+1}$ be the vector
\[
b'(i)=\frac{b(i)-b(i+1)}{2}, \qquad 0 \leq i \leq m.
\]
\end{defn}

\begin{prop}\label{P:P(b) in basis ell'}
Let $x \in \RR^\L$.
Then $x \in P(b)$ if and only if $\langle \ell'_i,x \rangle=b'(i)$ for all $0 \leq i \leq m$ and $x(\lambda) \geq 0$ for all $\lambda \in \L$.
\end{prop}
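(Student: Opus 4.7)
The plan is to observe that the positivity condition $x(\lambda) \geq 0$ is common to both descriptions of $P(b)$, so the claim reduces to showing the equivalence of the two affine systems
\[
\{\langle \ell_i,x\rangle = b(i)\}_{i=0}^m \quad \Longleftrightarrow \quad \{\langle \ell'_i,x\rangle = b'(i)\}_{i=0}^m,
\]
where throughout we use the conventions $b(0)=1$ and $b(m+1)=-1$. This is a purely linear-algebraic check using the change of basis in $U$ between $\{\ell_i\}$ and $\{\ell'_i\}$ already recorded after Definition \ref{D:define ell'}.

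For the forward direction I would simply substitute. If $\langle \ell_i,x\rangle = b(i)$ for all $0 \leq i \leq m$, then for $0 \leq i \leq m-1$
\[
\langle \ell'_i,x\rangle = \tfrac12\bigl(\langle \ell_i,x\rangle - \langle \ell_{i+1},x\rangle\bigr) = \tfrac12(b(i)-b(i+1)) = b'(i),
\]
and for $i=m$,
\[
\langle \ell'_m,x\rangle = \tfrac12\bigl(\langle \ell_0,x\rangle + \langle \ell_m,x\rangle\bigr) = \tfrac12(1+b(m)) = \tfrac12(b(m)-b(m+1)) = b'(m).
\]

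For the converse I would invoke the relation $\ell_k = -\sum_{i=0}^{k-1}\ell'_i + \sum_{i=k}^{m}\ell'_i$ noted in the excerpt. Taking inner product with $x$ gives
\[
\langle \ell_k,x\rangle = -\sum_{i=0}^{k-1}b'(i) + \sum_{i=k}^{m}b'(i).
\]
A telescoping computation using $b'(i) = \tfrac12(b(i)-b(i+1))$ yields
\[
\sum_{i=0}^{m}b'(i) = \tfrac12(b(0)-b(m+1)) = 1, \qquad \sum_{i=0}^{k-1}b'(i) = \tfrac12(b(0)-b(k)),
\]
so $\langle \ell_k,x\rangle = 1 - 2\cdot\tfrac12(1-b(k)) = b(k)$, which is exactly the required equation (and in particular $\langle \ell_0,x\rangle = 1$ when $k=0$).

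There is no real obstacle here: both implications reduce to the same telescoping identity, and the non-trivial content — namely that $\{\ell'_i\}_{i=0}^m$ is a basis of $U$ — has already been established. I would simply present the two substitutions cleanly, using the two-sided conventions $b(0)=1$, $b(m+1)=-1$, which are precisely what make the $i=m$ case of the forward direction and the $k=0$ case of the backward direction line up.
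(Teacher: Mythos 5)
Your proof is correct and takes essentially the same route as the paper: a change-of-basis computation between $\{\ell_i\}$ and $\{\ell'_i\}$ using the conventions $b(0)=1$, $b(m+1)=-1$. The only cosmetic difference is that the paper handles the converse abstractly, observing that the maps $x\mapsto(\langle\ell_i,x\rangle)_i$ and $x\mapsto(\langle\ell'_i,x\rangle)_i$ have the same kernel $U^\perp$, so the two affine subspaces coincide once one point is checked, whereas you verify it by explicit telescoping from $\ell_k=-\sum_{i=0}^{k-1}\ell'_i+\sum_{i=k}^{m}\ell'_i$ --- both arguments rest on the same fact that $\ell'_0,\dots,\ell'_m$ is a basis of $U$.
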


\begin{proof}
Similar to the linear map $L$ in Definition \ref{D:P(b)} let $L' \colon \RR^\L \to \RR^{m+1}$ be the linear transformation $L'(x)(i)=\langle \ell'_i,x \rangle$ where $0 \leq i \leq m$.
Since $\ell'_0,\dots,\ell'_m$ is a basis for $U$ it follows that $\ker(L')=U^\perp=\ker(L)$.
If $v \in L^{-1}\left((\begin{smallmatrix}1\\b\end{smallmatrix})\right)$ then one checks using Definitions \ref{D:define ell'} and \ref{D:bprime} that $L'(v)=b'$.
Thus, $L^{-1}\left((\begin{smallmatrix}1\\b\end{smallmatrix})\right)=v+U^\perp=L'{}^{-1}(b')$ and this completes the proof.
\end{proof}


\begin{example}\label{EX:L and L'}
Suppose that $m=4$.
We write the vectors $\ell_i \in \RR^\L$ from Definition \ref{D:ell_i and U} as the rows of the matrix $L$ below, where ``$-$'' denotes $-1$ and the columns of the matrix are indexed by the elements $\lambda$ of $\L=2^{[4]}$ ordered lexicographically.
\[
L=
\left[
\begin{array}{cccccccccccccccc}
{\scriptscriptstyle 0000} &
{\scriptscriptstyle 0001} &
{\scriptscriptstyle 0010} &
{\scriptscriptstyle 0011} &
{\scriptscriptstyle 0100} &
{\scriptscriptstyle 0101} &
{\scriptscriptstyle 0110} &
{\scriptscriptstyle 0111} &
{\scriptscriptstyle 1000} &
{\scriptscriptstyle 1001} &
{\scriptscriptstyle 1010} &
{\scriptscriptstyle 1011} &
{\scriptscriptstyle 1100} &
{\scriptscriptstyle 1101} &
{\scriptscriptstyle 1110} &
{\scriptscriptstyle 1111} \\
1 & 1 & 1 & 1 & 1 & 1 & 1 & 1 & 1 & 1 & 1 & 1 & 1 & 1 & 1 & 1 \\
1 & 1 & 1 & 1 & 1 & 1 & 1 & 1 & - & - & - & - & - & - & - & - \\
1 & 1 & 1 & 1 & - & - & - & - & 1 & 1 & 1 & 1 & - & - & - & - \\
1 & 1 & - & - & 1 & 1 & - & - & 1 & 1 & - & - & 1 & 1 & - & - \\
1 & - & 1 & - & 1 & - & 1 & - & 1 & - & 1 & - & 1 & - & 1 & - \\
\end{array}
\right]
\]
Consider a vector $b \in \RR^m$ where $|b(i)| \leq 1$ for all $1 \leq i \leq m$.
The polytope $P(b)$ is the set of solutions of 
\[
Lx=(\begin{smallmatrix}1\\b\end{smallmatrix}), \qquad x(\lambda) \geq 0.
\]
Consider the following $5 \times 5$ matrix $T$ and its inverse.
These are the transition matrices between the bases $\ell_0,\dots,\ell_m$ and $\ell'_0,\dots,\ell_m'$ of $U$.
See Definition \ref{D:define ell'}.
\[
T=\frac{1}{2} \cdot \begin{bmatrix}
1 & - & 0 & 0 & 0 \\ 
0 & 1 & - & 0 & 0 \\ 
0 & 0 & 1 & - & 0 \\  
0 & 0 & 0 & 1 & - \\ 
1 & 0 & 0 & 0 & 1  
\end{bmatrix}
\qquad
T^{-1} =
\begin{bmatrix}
1 & 1 & 1 & 1 & 1 \\
- & 1 & 1 & 1 & 1 \\
- & - & 1 & 1 & 1 \\
- & - & - & 1 & 1 \\
- & - & - & - & 1
\end{bmatrix}
\]
Then $P(b)$ is the set of solutions of the equations 
\[
TLx=T(\begin{smallmatrix}1\\b\end{smallmatrix}), \qquad  x(\lambda)\geq 0
\]
and one checks that $L'=TL$ is the matrix whose rows are the basis elements $\ell'_0,\dots,\ell'_4$ and by inspection of Definition \ref{D:bprime}, $T(\begin{smallmatrix}1\\b\end{smallmatrix})$ is the vector $b'$. 
Thus, $P(b)$ is the solution set of
\[
L'x=b'=\frac{1}{2}\begin{pmatrix} 1-b(1) \\ b(2)-b(1) \\ b(3)-b(2) \\ b(4)-b(3) \\ 1+b(4) \end{pmatrix} \qquad \text{ and } \qquad x(\lambda) \geq 0.
\]
Compare with Proposition \ref{P:P(b) in basis ell'}.
The matrix $L'$ has the form
\[
L'=
\left[
\begin{array}{cccccccccccccccc}
\scriptscriptstyle{\mu_4} &
\scriptscriptstyle{\mu_3} &
                    & 
\scriptscriptstyle{\mu_2} &
                    &
                    &
                    &
\scriptscriptstyle{\mu_1} &
                    &
                    &
                    &
                    &
                    &
                    &
                    &
\scriptscriptstyle{\mu_0} 
\\
{\scriptscriptstyle 0000} &
{\scriptscriptstyle 0001} &
{\scriptscriptstyle 0010} &
{\scriptscriptstyle 0011} &
{\scriptscriptstyle 0100} &
{\scriptscriptstyle 0101} &
{\scriptscriptstyle 0110} &
{\scriptscriptstyle 0111} &
{\scriptscriptstyle 1000} &
{\scriptscriptstyle 1001} &
{\scriptscriptstyle 1010} &
{\scriptscriptstyle 1011} &
{\scriptscriptstyle 1100} &
{\scriptscriptstyle 1101} &
{\scriptscriptstyle 1110} &
{\scriptscriptstyle 1111} \\
0 & 0 & 0 & 0 & 0 & 0 & 0 & 0 & 1 & 1 & 1 & 1 & 1 & 1 & 1 & 1 \\
0 & 0 & 0 & 0 & 1 & 1 & 1 & 1 & - & - & - & - & 0 & 0 & 0 & 0 \\
0 & 0 & 1 & 1 & - & - & 0 & 0 & 0 & 0 & 1 & 1 & - & - & 0 & 0 \\
0 & 1 & - & 0 & 0 & 1 & - & 0 & 0 & 1 & - & 0 & 0 & 1 & - & 0 \\
1 & 0 & 1 & 0 & 1 & 0 & 1 & 0 & 1 & 0 & 1 & 0 & 1 & 0 & 1 & 0 
\end{array}
\right]
\]
Observe that the entries of $L'$ are $\pm 1$ or $0$.
In each column the non-zero entries form a sequence of alternating $1$'s and $-1$'s, starting and ending with $1$.
Also notice that the columns indexed by $\mu_0,\dots,\mu_4$ form the standard basis of $R^{4+1}$.
These facts are not a coincidence and will play a major role.
We now turn to prove these crucial facts.
\end{example}

\begin{defn}\label{D:c_lambda}
For any $\lambda \in \L$ let $c_\lambda$ be the vector in $\RR^{m+1}$ defined by
\[
c_\lambda(i) = \ell_i'(\lambda), \qquad 0 \leq i \leq m.
\]
\end{defn}

The vectors $c_\lambda$ are the columns of the matrix $L'$ in Example \ref{EX:L and L'}.

\begin{lemma}\label{L:c_lambda explicit}
Let $\lambda \in \L$.
With the convention $\lambda(0)=0$ and $\lambda(m+1)=1$ in Section \ref{SS:L}, 
\[
c_\lambda(i) = \lambda(i+1)-\lambda(i), \qquad i=0,\dots,m.
\]
\end{lemma}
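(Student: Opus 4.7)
The proof is a direct unpacking of the definitions, split into the cases $0 \le i \le m-1$ and $i = m$, held together by the elementary identity
\[
\tfrac{1}{2}\bigl((-1)^a - (-1)^b\bigr) = b - a \qquad \text{for } a,b \in \{0,1\},
\]
which is immediate from the four-case table. I would first record this observation, since it converts differences of $\ell_i$ values (which are $\pm 1$) into differences of $\lambda$ values (which are $0$ or $1$).

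For $0 \le i \le m-1$ the argument is then immediate: by Definition \ref{D:c_lambda} and Definition \ref{D:define ell'},
\[
c_\lambda(i) = \ell'_i(\lambda) = \tfrac{1}{2}\bigl(\ell_i(\lambda) - \ell_{i+1}(\lambda)\bigr) = \tfrac{1}{2}\bigl((-1)^{\lambda(i)} - (-1)^{\lambda(i+1)}\bigr),
\]
and the identity above gives $\lambda(i+1) - \lambda(i)$.

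For $i = m$ I would exploit the conventions in Section \ref{SS:L} to put the formula into the same shape. Since $\lambda(0) = 0$ one has $\ell_0(\lambda) = 1 = (-1)^{\lambda(0)}$, and since $\lambda(m+1) = 1$ one has $1 = -(-1)^{\lambda(m+1)}$. Hence
\[
c_\lambda(m) = \ell'_m(\lambda) = \tfrac{1}{2}\bigl(\ell_0(\lambda) + \ell_m(\lambda)\bigr) = \tfrac{1}{2}\bigl((-1)^{\lambda(m)} - (-1)^{\lambda(m+1)}\bigr),
\]
and the same identity yields $\lambda(m+1) - \lambda(m)$, completing the uniform formula.

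There is essentially no obstacle: the only subtlety is noticing that the asymmetric definition $\ell'_m = \tfrac{1}{2}(\ell_0+\ell_m)$ becomes symmetric with $\ell'_i$ for $i<m$ once one reinterprets $\ell_0$ as ``$-\ell_{m+1}$'' via the convention $\lambda(m+1) = 1$. Once that bookkeeping is done, the proof is a single line of case checking.
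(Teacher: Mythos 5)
Your proof is correct and is essentially identical to the paper's: the same identity $\tfrac{1}{2}\bigl((-1)^a-(-1)^b\bigr)=b-a$ for $a,b\in\{0,1\}$, the same split into the cases $i\le m-1$ and $i=m$, and the same rewriting of $\ell_0(\lambda)=1$ as $-(-1)^{\lambda(m+1)}$ via the convention $\lambda(m+1)=1$. Nothing to add.
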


\begin{proof}
For any $a,b \in \{0,1\}$ we have $\frac{1}{2}((-1)^a - (-1)^b)=b-a$.
If $i=0,\dots,m-1$ then $c_\lambda(i)=\ell'_i(\lambda)=(\ell_i(\lambda)-\ell_{i+1}(\lambda))/2=((-1)^{\lambda(i)}-(-1)^{\lambda(i+1)})/2=\lambda(i+1)-\lambda(i)$.
If $i=m$ then $c_\lambda(m)=\ell'_m(\lambda) = (\ell_0(\lambda)+\ell_m(\lambda))/2=((-1)^{\lambda(m)}-(-1)^{\lambda(m+1)})/2=\lambda(m+1)-\lambda(m)$.
\end{proof}

\begin{cor}\label{C:c lambda support}
For any $\lambda \in \L$ the values of $c_\lambda$ are either $0, 1$ or $-1$.
If $0 \leq i_1<i_2<\dots<i_k \leq m$ are the indices for which $c_\lambda(i)\neq 0$ then $k$ is odd and $c_\lambda(i_1), c_\lambda(i_2),\dots,c_\lambda(i_k)$ is a sequence of the form $1,-1,1,-1,1,\dots,-1,1$ of alternating $1$'s and $-1$'s.
\end{cor}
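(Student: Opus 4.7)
The plan is to use Lemma \ref{L:c_lambda explicit} to turn the corollary into an elementary statement about $\{0,1\}$-valued sequences with prescribed boundary values.

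By Lemma \ref{L:c_lambda explicit}, $c_\lambda(i) = \lambda(i+1) - \lambda(i)$ for $i = 0,\dots,m$. Since each $\lambda(j) \in \{0,1\}$, the difference $\lambda(i+1) - \lambda(i)$ lies in $\{-1,0,1\}$, which gives the first assertion. Moreover $c_\lambda(i) \neq 0$ precisely when $\lambda(i) \neq \lambda(i+1)$, that is, when the sequence $\lambda(0), \lambda(1), \dots, \lambda(m+1)$ switches its value between positions $i$ and $i+1$; in this case $c_\lambda(i) = +1$ corresponds to a switch $0 \to 1$ and $c_\lambda(i) = -1$ corresponds to a switch $1 \to 0$.

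Let $i_1 < i_2 < \dots < i_k$ be the indices where $c_\lambda \neq 0$. Between two consecutive switches the value of $\lambda$ is constant, so the switches must alternate in direction, i.e.\ the sequence $c_\lambda(i_1), \dots, c_\lambda(i_k)$ alternates between $+1$ and $-1$. By the convention in Section \ref{SS:L}, $\lambda(0) = 0$, so the very first switch is $0 \to 1$, giving $c_\lambda(i_1) = +1$. Similarly $\lambda(m+1) = 1$, while the value just after the last switch at $i_k$ equals $\lambda(i_k + 1)$, which remains the value of $\lambda$ all the way up to $\lambda(m+1) = 1$; hence the last switch is also $0 \to 1$, giving $c_\lambda(i_k) = +1$. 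An alternating $\pm 1$ sequence that begins and ends with $+1$ has odd length, completing the proof.

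There is no real obstacle here; the only subtlety is remembering the boundary convention $\lambda(0) = 0$, $\lambda(m+1) = 1$ from Section \ref{SS:L}, which is exactly what forces the parity and the sign pattern.
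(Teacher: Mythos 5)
Your proof is correct and follows essentially the same route as the paper: both deduce from Lemma \ref{L:c_lambda explicit} that $c_\lambda$ is the difference sequence of a $\{0,1\}$-valued sequence with the boundary convention $\lambda(0)=0$, $\lambda(m+1)=1$, forcing the nonzero entries to alternate and to start and end with $+1$, hence to have odd number. Your write-up just spells out the ``switch'' argument slightly more explicitly than the paper's one-sentence proof.
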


\begin{proof}
Since $\lambda$ only attains the values $0,1$ and $\lambda(0)=0$ and $\lambda(m+1)=1$, it is clear the the sequence of differences $c_\lambda(i)=\lambda(i+1)-\lambda(i)$ must consist of only $0$ and $\pm 1$, and its support must starts at $1$ (because $\lambda(0)=0)$, end at $1$ (because $\lambda(m+1)=1$) and is alternating between $1$ and $-1$ (or else $\lambda(i)\neq 0,1$ for some $i$).
\end{proof}

Next, we single out a set of $m+1$ elements  $\mu_0,\dots,\mu_m \in \L$.
See Example \ref{EX:L and L'}.

\begin{defn}\label{D:mu_i}
For $0 \leq i \leq m$ define $\mu_i \in \L=2^{[m]}$ by
\[
\mu_i(k) = \left\{
\begin{array}{ll}
0 & \text{if } k \leq i \\
1 & \text{if } k \geq i+1
\end{array}\right.
\qquad (1 \leq k \leq m)
\]
\end{defn}
Thus, $\mu_i \in \L$ can be described as the following vectors
\[
\mu_i = (\underbrace{0,\dots,0}_{i \text{ times}},\underbrace{1,\dots,1}_{m-i \text{ times}}). 
\]
By definition of the partial order on $\L$ in Section \ref{SS:L}
\[
\mathbf{1}=\mu_0 \succ \mu_1 \succ \dots \succ \mu_m=\mathbf{0}
\]
where $\mathbf{0}$ and $\mathbf{1}$ are the minimal and maximal elements of the poset $\L$. 

\begin{lemma}\label{L:c(mu_i)=e_i}
For any $0 \leq i \leq m$ the vector $c_{\mu_i}$ is the standard basis vector $e_i \in \RR^{m+1}$.
\end{lemma}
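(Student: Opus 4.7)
The plan is a direct computation using Lemma \ref{L:c_lambda explicit} together with the explicit description of $\mu_i$ in Definition \ref{D:mu_i}. Since the assertion $c_{\mu_i} = e_i$ means $c_{\mu_i}(j) = \delta_{i,j}$ for all $0 \leq j \leq m$, I would simply evaluate $c_{\mu_i}(j) = \mu_i(j+1) - \mu_i(j)$ case by case on $j$.

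First I would record the relevant values of $\mu_i$ extended to $\{0,1,\dots,m+1\}$ via the conventions $\mu_i(0)=0$ and $\mu_i(m+1)=1$ from Section \ref{SS:L}. By Definition \ref{D:mu_i}, $\mu_i(k)=0$ when $k \leq i$ and $\mu_i(k)=1$ when $k \geq i+1$; the boundary conventions are consistent with this (since $0 \leq i$ and $m+1 \geq i+1$). Then I would split into three cases: for $j<i$ both $\mu_i(j)$ and $\mu_i(j+1)$ vanish, so $c_{\mu_i}(j)=0$; for $j>i$ both values equal $1$, so again $c_{\mu_i}(j)=0$; and for $j=i$ we have $\mu_i(i)=0$ and $\mu_i(i+1)=1$, giving $c_{\mu_i}(i)=1$.

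Putting these cases together yields $c_{\mu_i}(j)=\delta_{i,j}$, which is exactly the claim that $c_{\mu_i}=e_i$ in $\RR^{m+1}$. There is no real obstacle here; the only delicate point worth noting is to explicitly verify that the boundary conventions $\lambda(0)=0$ and $\lambda(m+1)=1$ are compatible with the extremal $\mu_i$'s (namely $\mu_0 = \mathbf{1}$ and $\mu_m = \mathbf{0}$), so that the formula from Lemma \ref{L:c_lambda explicit} applies uniformly at $j=0$ and $j=m$. Once that is observed, the three-line case analysis finishes the proof.
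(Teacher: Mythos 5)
Your proof is correct and follows the same route as the paper: both apply Lemma \ref{L:c_lambda explicit} to compute $c_{\mu_i}(j)=\mu_i(j+1)-\mu_i(j)=\delta_{i,j}$ directly from Definition \ref{D:mu_i}. Your case analysis and the check of the boundary conventions just spell out in more detail what the paper states in one line.
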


\begin{proof}
By Definition \ref{D:mu_i} and Lemma \ref{L:c_lambda explicit}, $c_{\mu_i}(j) = \mu_i(j+1)-\mu_i(j)=\delta_{i,j}$ for all $0 \leq j \leq m$.
\end{proof}

Recall that for any $\lambda \in \L$ we denote by $e_\lambda \in \RR^\L$ the standard basis vector $e_\lambda(\lambda')=\delta_{\lambda,\lambda'}$.

\begin{defn}[The supervertex - the decreasing case]\label{D:supervertex - descending case}
Consider $b \in \RR^m$ such that $\|b\|_\infty \leq 1$.
Assume that $b$ is decreasing, namely 
\[
b(1) \geq b(2) \geq \dots \geq b(m)
\]
Recall $b'$ from Definition \ref{D:bprime}.
The {\em supervertex} of $P(b)$ is the vector $q^* \in \RR^\L$ defined by
\[
q^*= \sum_{i=0}^m b'(i) \cdot e_{\mu_i},
\]
\end{defn}

Definition \ref{D:supervertex - descending case} requires justification: A-priori it is not clear that $q^* \in P(b)$ and that it is a vertex of this polytope.
This is the content of Proposition \ref{P:supervertex is a vertex} below.

Since the facets of $\Delta(\L)$ are contained in the hyperplanes $H_{{\lambda_0}}=\{ x \in \RR^\L : x(\lambda_0)=0\}$ and since $P(b)$ is the intersection of $\Delta(\L)$ with hyperplanes in $\RR^\L$, it easily follows that $x \in P(b)$ is a vertex if and only if $\supp(x)$ is minimal with respect to inclusion, namely no $y \in P(b)$ has $\supp(y) \subsetneq \supp(x)$.

Recall that we agree that $b(0)=1$ and $b(m+1)=-1$. 
Therefore, by definition of $b'$,  
\begin{equation}\label{E:supp q_b}
\supp(q^*) = \{ \mu_i \ : \ \text{$0 \leq i \leq m$ \  and \ $b(i)>b(i+1)$}\}.
\end{equation}

\begin{prop}\label{P:supervertex is a vertex}
The vector $q^* \in \RR^\L$ from Definition \ref{D:supervertex - descending case} is a vertex of $P(b)$.
\end{prop}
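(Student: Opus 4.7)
The plan is to verify first that $q^*$ lies in $P(b)$ and second that its support is minimal with respect to inclusion, which, by the criterion stated immediately before the proposition, is equivalent to being a vertex.

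For membership in $P(b)$: the hypothesis that $b$ is decreasing, combined with the conventions $b(0)=1$ and $b(m+1)=-1$, gives $b(i)\geq b(i+1)$ for every $0\leq i\leq m$, hence $b'(i)=(b(i)-b(i+1))/2\geq 0$. Thus $q^*=\sum_{i=0}^m b'(i)e_{\mu_i}$ is non-negative coordinatewise, and the sum $\sum_{i=0}^m b'(i)$ telescopes to $(b(0)-b(m+1))/2=1$, so $q^*\in\Delta(\L)$. To verify the remaining linear constraints it suffices, by Proposition \ref{P:P(b) in basis ell'}, to check $\langle \ell_j',q^*\rangle=b'(j)$ for $0\leq j\leq m$; but using Definition \ref{D:c_lambda} and Lemma \ref{L:c(mu_i)=e_i},
\[
\langle \ell_j',q^*\rangle=\sum_{i=0}^m b'(i)\,\ell_j'(\mu_i)=\sum_{i=0}^m b'(i)\,c_{\mu_i}(j)=\sum_{i=0}^m b'(i)\,\delta_{ij}=b'(j).
\]

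For minimality of the support: from the above, $\supp(q^*)=\{\mu_i:b'(i)>0\}$. Suppose $y\in P(b)$ satisfies $\supp(y)\subseteq\supp(q^*)$. Then $y=\sum_i y(\mu_i)e_{\mu_i}$ and, applying $L'$ and using Lemma \ref{L:c(mu_i)=e_i} once more,
\[
b'=L'(y)=\sum_i y(\mu_i)\,c_{\mu_i}=\sum_i y(\mu_i)\,e_i,
\]
so $y(\mu_i)=b'(i)$ for every $i$, forcing $y=q^*$. Hence no proper subset of $\supp(q^*)$ supports a point of $P(b)$, and $q^*$ is a vertex.

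There is no real obstacle here: the entire argument is driven by Lemma \ref{L:c(mu_i)=e_i}, which identifies the columns of $L'$ at $\mu_0,\dots,\mu_m$ with the standard basis of $\RR^{m+1}$. Once that structural fact is available, both the membership check and the uniqueness argument reduce to reading off standard basis coordinates, and the cleverness lies entirely in the choice of the coefficients $b'(i)$ matching those coordinates of the target vector $b'$.
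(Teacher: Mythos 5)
Your proof is correct and follows essentially the same route as the paper: membership in $P(b)$ via the nonnegativity of the $b'(i)$ together with Proposition \ref{P:P(b) in basis ell'} and Lemma \ref{L:c(mu_i)=e_i}, and vertexhood by showing any $y\in P(b)$ with $\supp(y)\subseteq\supp(q^*)$ must equal $q^*$, again by reading off coordinates through Lemma \ref{L:c(mu_i)=e_i}. The only cosmetic differences are that you verify the telescoping sum (redundant once the $\ell'_i$-constraints are checked) and apply $L'$ directly to $y$ rather than to the difference $y-q^*$ as the paper does.
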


\begin{proof}
First, $P(b)$ is not empty by Proposition \ref{P:non empty P(b)}.
Since  $b$ is decreasing and $\|b\|_\infty \leq 1$ we see that $b'(i) \geq 0$ for all $i$.
Therefore $q^*(\lambda) \geq 0$ for all $\lambda \in \L$.
Next, for any $0 \leq k \leq m$, Lemma \ref{L:c(mu_i)=e_i} implies that
\[
\langle \ell_k',q^* \rangle =
\sum_{i=0}^m b'(i) \langle \ell'_k , e_{\mu_i} \rangle =
\sum_{i=0}^m b'(i) \ell_k'(\mu_i) =
\sum_{i=0}^m b'(i) c_{\mu_i}(k) =
b'(k).
\]
Proposition \ref{P:P(b) in basis ell'} shows that $q^* \in P(b)$.

To show that $q^*$ is a vertex, consider $x \in P(b)$ such that $\supp(x) \subseteq \supp(q^*)$.
Set $y=x-q^*$.
Then $\supp(y) \subseteq \{ \mu_0,\dots,\mu_m\}$ and $\langle \ell'_i , y \rangle =0$ for all $0 \leq i \leq m$ by Proposition \ref{P:P(b) in basis ell'}.
By Lemma \ref{L:c(mu_i)=e_i},
\[
\langle \ell'_i,y \rangle = 
\sum_{j=0}^m y(\mu_j)\ell'_i(\mu_j) = 
\sum_{j=0}^m y(\mu_j)c_{\mu_j}(i) = y(\mu_i).
\]
This shows that $y=0$, hence $x=q^*$ as needed.
\end{proof}


Recall the action of $\Sigma_m$ on $\RR^m$ and $\RR^\L$ from Section \ref{SS:action Sigma_m on R^L}.

\begin{defn}[The supervertex - the general case]
\label{D:supervertex - general case}
Let $b \in \RR^m$ be such that $\|b\|_\infty \leq 1$.
Choose $\sigma \in \Sigma_m$ such that $\sigma_*(b)$ is decreasing.
The {\em supervertex} $q_b^*$ of $P(b)$ is the preimage of the supervertex $q_{\sigma_*(b)}^* \in P(\sigma_*(b))$ in Definition \ref{D:supervertex - descending case} under the linear isomorphism of polytopes $\sigma_* \colon P(b) \to P(\sigma_*(b))$ in Proposition \ref{P:P(b)-->P(sigma(b))}.
\end{defn}

Once again, we need to justify the definition.
A-priori it is not clear that the definition of $q^*$ is independent of the choice of $\sigma$, thus making supervertices possibly non-unique.

\begin{prop}
Let $b \in \RR^m$ satisfy $\|b\|_\infty \leq 1$ so that $P(b)$ is not empty.
Then the vertex $q_b^* \in P(b)$ from Definition \ref{D:supervertex - general case} is independent of the choice of $\sigma$.
Moreover, $\sigma_*(q_b^*)=q_b^*$ for any $\sigma \in \Sigma_m$ such that $\sigma_*(P(b))=P(b)$.
\end{prop}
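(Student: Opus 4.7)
The plan is to reduce both claims to a single key lemma: if $c \in \RR^m$ is decreasing with $\|c\|_\infty \le 1$ and $\tau \in \Sigma_m$ satisfies $\tau_*(c) = c$, then $\tau_*(q_c^*) = q_c^*$, where $q_c^*$ is the supervertex from Definition \ref{D:supervertex - descending case}.

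First I would dispose of the uniqueness claim. Suppose $\sigma_1, \sigma_2 \in \Sigma_m$ both make $b$ decreasing. Then $\sigma_{1*}(b) = \sigma_{2*}(b) =: c$, so $\tau := \sigma_2 \sigma_1^{-1}$ satisfies $\tau_*(c) = c$. Independence of $\sigma$ amounts to $\sigma_{1*}^{-1}(q_c^*) = \sigma_{2*}^{-1}(q_c^*)$, i.e.\ $\tau_*(q_c^*) = q_c^*$. For the second claim, suppose $\sigma_*(P(b)) = P(b)$. By Proposition \ref{P:P(b)-->P(sigma(b))} this means $P(\sigma_*(b)) = P(b)$, hence $\sigma_*(b) = b$ by Corollary \ref{C:Sigma equivariant action on P}. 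Choosing $\pi$ with $\pi_*(b) = c$ decreasing, define $\tau := \pi \sigma \pi^{-1}$. Then $\tau_*(c) = \pi_* \sigma_*(b) = \pi_*(b) = c$, and the assertion $\sigma_*(q_b^*) = q_b^*$ becomes $\tau_*(q_c^*) = q_c^*$ after applying $\pi_*$ to both sides.

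The key lemma is where the real content lies. The condition $\tau_*(c) = c$ means $c \circ \tau^{-1} = c$, i.e.\ $\tau$ only permutes indices within \emph{constant blocks} of $c$. Since $c$ is decreasing, these blocks are intervals $\{j+1, \dots, j'\} \subseteq [m]$ on which $c$ is constant, separated by the \textbf{boundary indices} $i \in \{0,\dots,m\}$ where $c(i) > c(i+1)$ (with the convention $c(0)=1$, $c(m+1)=-1$). By Definition \ref{D:bprime} these are exactly the indices with $c'(i) \neq 0$, so
\[
q_c^* = \sum_{i \text{ boundary}} c'(i)\, e_{\mu_i}.
\]
Now for each boundary $i$, the tail $\{i+1, \dots, m\}$ is a union of \emph{complete} constant blocks of $c$ (since $i$ is a boundary, no block straddles the gap between $i$ and $i+1$). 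Therefore $\tau$, which permutes within blocks, satisfies $\tau(\{i+1,\dots,m\}) = \{i+1,\dots,m\}$.

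The last step translates this set-theoretic invariance to $q_c^*$. Using $\sigma_*(e_\lambda) = e_{\sigma_*(\lambda)}$ (immediate from the definition in Section \ref{SS:action Sigma_m on R^L}), and noting from Definition \ref{D:mu_i} that $\supp(\mu_i) = \{i+1,\dots,m\}$, we compute
\[
\supp(\tau_*(\mu_i)) = \tau(\supp(\mu_i)) = \tau(\{i+1,\dots,m\}) = \{i+1,\dots,m\} = \supp(\mu_i).
\]
Since elements of $\L$ are determined by their support, $\tau_*(\mu_i) = \mu_i$ for each boundary $i$. Hence $\tau_*(q_c^*) = \sum_{i \text{ boundary}} c'(i)\, e_{\tau_*(\mu_i)} = q_c^*$, completing both claims. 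The only mildly subtle point is the observation that boundary indices align precisely with the block structure, so that each relevant $\mu_i$ is $\tau$-invariant; the rest is bookkeeping with the definitions.
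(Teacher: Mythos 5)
Your proof is correct and follows essentially the same route as the paper: reduce both claims to the case of a decreasing $b$ fixed by the permutation, then observe that such a permutation preserves the constant blocks of $b$, hence the tails $\{i+1,\dots,m\}$ at the indices where $b'(i)\neq 0$, so it fixes each $\mu_i$ in $\supp(q^*)$ and therefore $q^*$ itself. Your explicit conjugation argument for the second claim and the support computation $\supp(\tau_*(\mu_i))=\tau(\supp(\mu_i))$ just spell out steps the paper leaves implicit.
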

\begin{proof}
Suppose that $\tau \in \Sigma_m$ is another permutation with $\tau_*(b)$ decreasing.
Then $\tau_*(b)=\sigma_*(b)$ and in particular $P(\sigma_*(b))=P(\tau_*(b))$ and $q_{\sigma_*(b)}^*=q_{\tau_*(b)}^*$ in Definition \ref{D:supervertex - descending case}.
Thus, the proof of the proposition reduces to showing that for any {\em decreasing} $b \in \RR^m$, the supervertex $q^* \in P(b)$ in Definition \ref{D:supervertex - descending case} is fixed by the linear isomorphism $\sigma_* \colon P(b) \to P(b)$  for any $\sigma \in \Sigma_m$ such that $\sigma_*(b)=b$.
For the remainder of the proof we fix such decreasing $b$ and such $\sigma$.
The claim that $\sigma_*(q^*)=q^*$ will follows once we show that $\sigma_*^{-1}(\lambda)=\lambda$ for any $\lambda \in \supp(q^*)$.

Suppose $\mu_i \in \supp(q^*)$, see \eqref{E:supp q_b}.
Since $\sigma_*^{-1}(b)=b$ it is clear that $\sigma$ acts by permuting the sets of indices $j \in [m]$ for which the values of $b$ are equal.
Since $b$ is decreasing and $b'(i)=q^*(\mu_i)>0$, if $j \geq i+1$ then $b(\sigma(j))=b(j)>b(i)$ so $\sigma(j) \geq i+1$. 
Thus, $\sigma$ permutes $\{i+1,\dots,m\}$ and $\{1,\dots,i\}$ separately.
It follows  directly from Definition \ref{D:mu_i} that $\sigma_*^{-1}(\mu_i)=\mu_i$ as needed.
\end{proof}

\begin{void}\label{V:direct description of supervertex}
{\bf Direct description of the supervertex.}
Given $b \in \RR^m$ with $\|b\|_\infty \leq 1$ we can describe the supervertex $q^*$ of $P(b)$ as follows.
As above, it is understood that $b(0)=1$ and $b(m+1)=-1$.
We can arrange $b(1),b(2),\dots,b(m)$ in decreasing order
\[
b(i_1) \geq b(i_2) \geq \dots \geq b(i_m).
\]
For $k=0,\dots,m$ let $\theta_k \in \L$ be the  characteristic function of $\{i_{k+1}, i_{k+2}, \dots, i_m\} \subseteq [m]$.
The supervertex has the form
\[
q^*=\sum_{k=0}^m \tfrac{b(i_k)-b(i_{k+1})}{2} \cdot e_{\theta_k},
\]
where it is understood that $i_0=0$ and $i_{m+1}=m+1$ and $b(0)=1$ and $b(m+1)=-1$.
\end{void}

\section{The subvertex of $P(b)$}

\begin{defn}\label{D:nu_i}
Define the following elements of $\nu_0,\dots,\nu_m \in \L$.
For any $0 \leq i \leq m$
\[
\nu_i(j)=1-\delta_{ij} \qquad  (1 \leq j \leq m). 
\]
\end{defn}

Thus, $\nu_0=\mathbf{1}$ is the maximal element of $\L$, and $\nu_i = (1,\dots,1,0,1,\dots,1)$ where the $0$ is at the $i$th position.

\begin{defn}\label{D:b''}
For $b \in \RR^m$ define $b'' \in \RR^{m+1}$ by
\begin{eqnarray*}
&& b''(i)=\frac{b(i)+1}{2} \qquad \text{ for $1 \leq i \leq m$, and } \\
&& b''(0)=1-\sum_{i=1}^m b''(i).
\end{eqnarray*}
\end{defn}

\begin{prop}
By construction $\sum_{i=0}^m b''(i)=1$.
If $\|b\|_\infty \leq 1$ then $b''(i) \geq 0$ for all $1 \leq i \leq m$ (but $b''(0)$ may be negative).
\hfill $\Box$
\end{prop}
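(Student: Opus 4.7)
The plan is essentially to unfold Definition \ref{D:b''} directly; both claims are immediate consequences of the defining formulas, so the ``proof'' is really just a bookkeeping check together with one illustrative example.

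For the first assertion, I would simply observe that the relation $b''(0) = 1 - \sum_{i=1}^m b''(i)$ in Definition \ref{D:b''} is literally a rearrangement of $\sum_{i=0}^m b''(i) = 1$. So nothing needs to be proved beyond quoting the definition.

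For the second assertion, the hypothesis $\|b\|_\infty \leq 1$ means $-1 \leq b(i) \leq 1$ for every $1 \leq i \leq m$. In particular $b(i) + 1 \geq 0$, hence $b''(i) = \tfrac{1}{2}(b(i)+1) \geq 0$ for all $1 \leq i \leq m$. To show that no such lower bound can be extracted for $b''(0)$, I would exhibit the extreme case $b = (1,1,\dots,1)$, for which $b''(i) = 1$ for each $1 \leq i \leq m$ and therefore $b''(0) = 1 - m$, which is negative as soon as $m \geq 2$.

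The main ``obstacle'' is nothing more than remembering the sign convention: $b''(0)$ is defined by completion rather than by the formula $(b(0)+1)/2$, so one must be careful not to misapply the simple argument of the second part to the zeroth coordinate. There is no genuine difficulty here.
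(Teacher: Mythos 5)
Your proof is correct and follows exactly the route the paper intends: the paper states this proposition without proof precisely because, as you observe, the identity $\sum_{i=0}^m b''(i)=1$ is a rearrangement of the defining formula for $b''(0)$ and the nonnegativity of $b''(i)$ for $1\leq i\leq m$ is immediate from $b''(i)=\tfrac{b(i)+1}{2}$ and $|b(i)|\leq 1$. Your example $b=(1,\dots,1)$ confirming that $b''(0)$ can indeed be negative is a harmless extra.
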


\begin{defn}\label{D:subvertex}
Suppose that $\|b\|_\infty \leq 1$.
The {\em subvertex} of $P(b)$ is $q^* \in \RR^\L$ defined by
\[
q_* = \sum_{i=0}^m b''(i) \cdot e_{\nu_i}.
\]
\end{defn}

\noindent
{\bf Important remark:} 
As its name suggests, as well as deceives, $q_*$ {\em need not} be an element of $P(b)$.
Remarkably, by Proposition \ref{P:q_* in P(b)} below, if $q_* \in P(b)$ then it is a vertex of this polytope

\begin{defn}\label{D:ell''}
Define vectors $\ell''_0,\dots,\ell_m'' \in \RR^\L$ by
\[
\ell''_0=\ell_0 \qquad \text{and} \qquad 
\ell''_i=\tfrac{1}{2}(\ell_i+\ell_0) \text{ for all $1 \leq i \leq m$.}
\]
\end{defn}

The next lemma follows directly from Definitions \ref{D:P(b)}, \ref{D:b''} and \ref{D:ell''}.

\begin{lemma}\label{L:P(b) in terms of ell''}
Let $x \in \RR^\L$.
Then $x \in P(b)$ if and only if $\langle \ell''_0,x\rangle =1$ and $\langle \ell''_i,x\rangle =b''(i)$ for all $1 \leq i \leq m$ and $x(\lambda) \geq 0$ for all $\lambda \in \L$.
\hfill $\Box$.
\end{lemma}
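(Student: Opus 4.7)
The plan is to observe that Lemma \ref{L:P(b) in terms of ell''} is essentially a change of basis statement: the system $\{\langle \ell''_i, x\rangle\}_{i=0}^m$ is an invertible linear transform of $\{\langle \ell_i, x\rangle\}_{i=0}^m$, and under this transform the target vector $(1, b(1), \dots, b(m))$ from Definition \ref{D:P(b)} is carried to $(1, b''(1), \dots, b''(m))$ from Definition \ref{D:b''}. Since the half-space constraints $x(\lambda) \geq 0$ appear identically in both characterisations, it suffices to prove that the two affine systems of $m+1$ linear equations cut out the same subspace.

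Concretely, I would unpack Definition \ref{D:ell''}. The equation for $i=0$ is immediate: $\ell''_0 = \ell_0$, so $\langle \ell''_0, x\rangle = 1$ is literally the first defining equation of $P(b)$, namely $x \in \Delta(\L)$ on the affine side. For $1 \leq i \leq m$, linearity of the inner product combined with $\ell''_i = \tfrac{1}{2}(\ell_i + \ell_0)$ gives
\[
\langle \ell''_i, x \rangle = \tfrac{1}{2}\bigl(\langle \ell_i, x\rangle + \langle \ell_0, x\rangle\bigr).
\]
From here I argue both implications. If $x \in P(b)$, then $\langle \ell_0,x\rangle = 1$ and $\langle \ell_i,x\rangle = b(i)$, so the displayed identity yields $\langle \ell''_i,x\rangle = \tfrac{1}{2}(b(i)+1) = b''(i)$ by Definition \ref{D:b''}. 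Conversely, if $\langle \ell''_0, x\rangle = 1$ and $\langle \ell''_i, x\rangle = b''(i)$ for all $1 \leq i \leq m$, then $\langle \ell_0, x\rangle = 1$ and solving the displayed identity for $\langle \ell_i, x\rangle$ gives $\langle \ell_i, x\rangle = 2b''(i) - 1 = b(i)$; hence $x \in P(b)$ by Definition \ref{D:P(b)}.

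There is really no obstacle here; the only point meriting a sentence of attention is indexing. The lemma's equations are indexed by $0 \leq i \leq m$, but the case $i=0$ uses $\ell_0$ directly and the value $b''(0)$ from Definition \ref{D:b''} does not appear among the constraints: $b''(0)$ is defined so that $\sum_{i=0}^m b''(i) = 1$, and this relation is automatically satisfied by any $x \in P(b)$ (as a consistency check) but is not needed as an independent equation. Thus the proof reduces to the two-line linear computation above.
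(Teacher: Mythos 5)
Your proof is correct and is exactly the verification the paper has in mind: the paper states that the lemma ``follows directly from Definitions \ref{D:P(b)}, \ref{D:b''} and \ref{D:ell''}'' and omits the computation, which is precisely your two-line change-of-variables argument using $\ell''_0=\ell_0$ and $\langle \ell''_i,x\rangle=\tfrac{1}{2}(\langle \ell_i,x\rangle+\langle \ell_0,x\rangle)$. Your remark about the index $i=0$ and the role of $b''(0)$ is also consistent with how the lemma is stated and later used.
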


\begin{lemma}\label{L:explicit ell''}
With the convention $\lambda(0)=0$ for all $\lambda \in \L$ in Section \ref{SS:L}, for any $0 \leq i \leq m$
\[
\ell''_i(\lambda)=1-\lambda(i).
\]
In particular $\ell''_0(\nu_j)=1$ and $\ell''_i(\nu_j)=\delta_{ij}$ for all $1 \leq i \leq m$ and all $0 \leq j \leq m$.
\end{lemma}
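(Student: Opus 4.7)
The plan is to unpack the definitions and do two short direct computations, one per line of the statement.

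First I would verify the formula $\ell''_i(\lambda) = 1 - \lambda(i)$ by splitting on $i = 0$ versus $1 \leq i \leq m$. For $i = 0$ the definition gives $\ell''_0 = \ell_0$, which is identically $1$ on $\L$; the convention $\lambda(0) = 0$ from Section \ref{SS:L} makes $1 - \lambda(0) = 1$ as well, so the two sides agree. For $1 \leq i \leq m$ one uses $\ell''_i(\lambda) = \tfrac{1}{2}(\ell_i(\lambda) + \ell_0(\lambda)) = \tfrac{1}{2}((-1)^{\lambda(i)} + 1)$, and then observes that this equals $1$ when $\lambda(i) = 0$ and $0$ when $\lambda(i) = 1$; in both cases this is exactly $1 - \lambda(i)$, since $\lambda$ takes values in $\{0,1\}$.

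Then I would deduce the ``in particular'' clause by substituting $\lambda = \nu_j$ and reading off $\nu_j(i)$ from Definition \ref{D:nu_i}. For $i = 0$, the convention $\nu_j(0) = 0$ gives $\ell''_0(\nu_j) = 1 - 0 = 1$. For $1 \leq i \leq m$ and $0 \leq j \leq m$, Definition \ref{D:nu_i} says $\nu_j(i) = 1 - \delta_{ji} = 1 - \delta_{ij}$, so $\ell''_i(\nu_j) = 1 - (1 - \delta_{ij}) = \delta_{ij}$, as required.

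There is no real obstacle here: the whole statement is a bookkeeping check that the definitions of $\ell''_i$ and $\nu_j$ were made in a way that produces the standard-basis-like behaviour needed in the sequel (presumably in the analogue of Lemma \ref{L:c(mu_i)=e_i} for the subvertex). The only place one has to be mildly careful is to remember the two conventions $\lambda(0) = 0$ (so that the formula $1 - \lambda(i)$ is also valid at $i = 0$) and that $\nu_j$ for $j = 0$ is the all-ones vector, so that $\nu_0(i) = 1$ for every $1 \leq i \leq m$, consistent with $\delta_{i0} = 0$.
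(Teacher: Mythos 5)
Your proof is correct and follows the same route as the paper, whose entire argument is the observation that $\tfrac{1+(-1)^a}{2}=1-a$ for $a\in\{0,1\}$, combined with the conventions $\lambda(0)=0$ and the definition of $\nu_j$; your write-up merely spells out the case split and the substitution $\lambda=\nu_j$ explicitly. Nothing is missing.
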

\begin{proof}
If $a=0,1$ then $\frac{1+(-1)^a}{2}=1-a$.
\end{proof}

\begin{prop}\label{P:q_* in P(b)}
Suppose that $\|b\|_\infty \leq 1$.
Then $q_*$ belongs to $P(b)$ if and only if $\tfrac{1}{m}\sum_{i=1}^m b(i) \leq \tfrac{2}{m}-1$.
In this case $q_*$ is in fact a vertex of the polytope $P(b)$.
\end{prop}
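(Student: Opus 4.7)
The plan is to mirror the strategy used for the supervertex in Proposition~\ref{P:supervertex is a vertex}, only now using the alternative basis $\ell''_0,\dots,\ell''_m$ of Definition~\ref{D:ell''} together with Lemma~\ref{L:explicit ell''} in place of $\ell'_i$ and Lemma~\ref{L:c(mu_i)=e_i}. The key observation is that $\ell''_0,\dots,\ell''_m$ is a ``dual basis'' to $\{e_{\nu_0},\dots,e_{\nu_m}\}$ in the sense that $\ell''_0(\nu_j)=1$ and $\ell''_i(\nu_j)=\delta_{ij}$ for $i\ge 1$, which makes every step mechanical.

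First, I will verify that $q_*$ always satisfies the affine constraints cutting out $P(b)$, independently of the inequality. A direct computation using Lemma~\ref{L:explicit ell''} gives $\langle \ell''_0,q_*\rangle=\sum_{j=0}^m b''(j)=1$, which is precisely the defining relation of $b''(0)$ in Definition~\ref{D:b''}, and $\langle \ell''_i,q_*\rangle=b''(i)$ for $1\le i\le m$. By Lemma~\ref{L:P(b) in terms of ell''}, the question of whether $q_*\in P(b)$ therefore reduces to the nonnegativity of the coordinates $q_*(\lambda)$.

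Second, since $\|b\|_\infty\le 1$ already forces $b''(i)=(b(i)+1)/2\ge 0$ for $1\le i\le m$, the only coordinate of $q_*$ whose sign is in doubt is $q_*(\nu_0)=b''(0)$. Unpacking Definition~\ref{D:b''}, the inequality $b''(0)\ge 0$ rearranges to $\sum_{i=1}^m b(i)\le 2-m$, which is the stated condition after dividing by $m$. This settles the ``if and only if''.

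Third, for the vertex claim I will use the criterion (already invoked on page introducing $P(b)$) that $x\in P(b)$ is a vertex iff its support is minimal among supports of elements of $P(b)$. Given $x\in P(b)$ with $\supp(x)\subseteq\supp(q_*)\subseteq\{\nu_0,\dots,\nu_m\}$, set $y=x-q_*$; then $\supp(y)\subseteq\{\nu_0,\dots,\nu_m\}$ and $\langle \ell''_i,y\rangle=0$ for every $0\le i\le m$ by Lemma~\ref{L:P(b) in terms of ell''}. For $i\ge 1$, Lemma~\ref{L:explicit ell''} collapses this to $y(\nu_i)=0$; then $\langle \ell''_0,y\rangle=\sum_j y(\nu_j)=y(\nu_0)=0$ forces $y(\nu_0)=0$ as well. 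Hence $y=0$ and $x=q_*$, so the support of $q_*$ is minimal. There is no genuine obstacle once the dual-basis role of $\ell''_i$ vis-\`a-vis $\{e_{\nu_j}\}$ is recognised; Definitions~\ref{D:nu_i},~\ref{D:b''} and~\ref{D:ell''} have been arranged precisely so that the argument goes through by a short calculation.
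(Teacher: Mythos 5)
Your proposal is correct and follows essentially the same route as the paper's own proof: the same computation of $\langle \ell''_i,q_*\rangle$ via Lemma~\ref{L:explicit ell''}, the same reduction via Lemma~\ref{L:P(b) in terms of ell''} to the single condition $b''(0)\ge 0$, and the same minimal-support argument with $y=x-q_*$ for the vertex claim. Nothing further is needed.
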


\begin{proof}
Notice that $\langle \ell''_i , q_* \rangle = \sum_{j=0}^m b''(j) \ell''_i(\nu_j)$ for all $i$.
It follows from Lemma \ref{L:explicit ell''} and the definition of $b''$ that $\langle \ell''_0 , q_* \rangle=\sum_{j=0}^m b''(j)=1$ and that $\langle \ell''_i , q_* \rangle=b''(i)$ for all $1 \leq i \leq m$.
Since $\|b\|_\infty \leq 1$ it follows that $b''(i) \geq 0$ for all $i \geq 1$, so Lemma \ref{L:P(b) in terms of ell''} implies that $q_* \in P(b)$ if and only if $b''(0) \geq 0$.
By Inspection of Definition \ref{D:b''}, this is equivalent to the requirement $\sum_{i=1}^m b(i) \leq 2-m$, as needed.

It remains to prove that $q_*$ is a vertex of $P(b)$ in this case.
Suppose that $x \in P(b)$ and $\supp(x) \subseteq \supp(q_*)$.
Set $y=x-q_*$.
Then $\supp(y) \subseteq \supp(q_*) \subseteq \{\nu_0,\dots,\nu_m\}$ and by Lemma \ref{L:P(b) in terms of ell''}, $\langle \ell''_i, y\rangle =0$ for all $i \geq 0$.
Clearly $\langle \ell''_i,y \rangle = \sum_{j=0}^m y(\nu_j)\ell''_i(\nu_j)$.
By Lemma \ref{L:explicit ell''}, if $i \geq 1$ we have $y(\nu_i)=\langle \ell''_i,y \rangle = 0$.
If $i=0$ then $0=\langle \ell''_0,y\rangle = \sum_{j=0}^m y(\nu_j)$ so $y(\nu_0)=0$ as well.
Hence $y=0$, so $x=q_*$, and therefore $q_*$ is a vertex of $P(b)$.
\end{proof}

\section{Truncation, $\ell$-positive vectors, maximum and minimum}

Throughout this section we assume that $P(b)$ is not empty, i.e $\|b\|_\infty \leq 1$.
Recall $\ell_0,\dots,\ell_m$ and $U$ from Definition \ref{D:ell_i and U} and the sets $\Ulpos$ and $(\Ulpos)^+$ of (truncated) $\ell$-positive vectors defined in Section \ref{SS:ell positive}.
The purpose of this section is to prove the following theorems.

\begin{theorem}\label{T:max at supervertex}
Let $q^*$ be the supervertex of $P(b)$.
Then for any $u \in \Ulpos$
\[
\max \{ \langle u^+,x \rangle : x \in P(b) \} = \langle u^+,q^* \rangle.
\]
\end{theorem}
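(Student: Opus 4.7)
My plan is to prove optimality of $q^*$ via linear programming duality, exhibiting an explicit dual certificate built from the basis $\ell'_0,\dots,\ell'_m$ of $U$.

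First I would reduce to the case $b(1) \geq \dots \geq b(m)$. Proposition \ref{P:Sigma_m permutes ell_i} shows that $\Sigma_m$ acts on $\RR^\L$ by orthogonal transformations that permute $\ell_1,\dots,\ell_m$ while fixing $\ell_0$; hence the action preserves $U$, preserves $\Ulpos$, and commutes with the truncation $u \mapsto u^+$. Combined with the intertwining $\sigma_* \colon P(b) \to P(\sigma_* b)$ of Proposition \ref{P:P(b)-->P(sigma(b))} and Definition \ref{D:supervertex - general case}, this lets me assume $b$ is decreasing.

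With $b$ decreasing, define the candidate dual certificate
\[
v \ = \ \sum_{j=0}^m u^+(\mu_j)\,\ell'_j \ \in \ U.
\]
By Lemma \ref{L:c(mu_i)=e_i}, $v(\mu_j) = u^+(\mu_j)$ for every $j$. Since $v \in U$ and every $x \in P(b)$ satisfies $\langle \ell_i, x\rangle = b(i)$ (with $b(0)=1$), the pairing $\langle v, x\rangle$ is constant on $P(b)$, and evaluating at $q^*$ gives
\[
\langle v, x\rangle \ = \ \langle v, q^*\rangle \ = \ \sum_{j=0}^m b'(j)\,v(\mu_j) \ = \ \sum_{j=0}^m b'(j)\,u^+(\mu_j) \ = \ \langle u^+, q^*\rangle.
\]
So once the pointwise inequality $v(\lambda) \geq u^+(\lambda)$ is established for every $\lambda \in \L$, the nonnegativity of $x$ forces $\langle u^+, x\rangle \leq \langle v, x\rangle = \langle u^+, q^*\rangle$ for every $x \in P(b)$, finishing the proof.

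The heart of the argument, and what I expect to be the main obstacle, is the dual feasibility $v \geq u^+$. Since $\{\ell'_j\}$ is dual to $\{\mu_j\}$ we also have $u = \sum_j u(\mu_j)\ell'_j$, hence $v - u = \sum_j u^-(\mu_j)\,\ell'_j$ with $u^-(\mu_j) := \max\{-u(\mu_j),0\}$. Because $u(\mu_{j+1}) - u(\mu_j) = 2a_{j+1} > 0$ when $u = \sum a_i\ell_i$ with $a_1,\dots,a_m>0$, the sequence $u(\mu_j)$ is strictly increasing in $j$. Let $j^*$ be the smallest index with $u(\mu_{j^*})\geq 0$; if no such index exists then $u^+\equiv 0$ and the claim is trivial. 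Then $u^-(\mu_j)$ is strictly decreasing and positive on $\{0,\dots,j^*-1\}$, while $u(\mu_j)$ is strictly increasing and nonnegative on $\{j^*,\dots,m\}$. It therefore suffices to check separately that $(v-u)(\lambda) \geq 0$ and $v(\lambda) \geq 0$ for every $\lambda$; together these yield $v(\lambda) \geq u(\lambda)$ and $v(\lambda) \geq 0$, hence $v(\lambda) \geq u^+(\lambda)$. Both reduce to an elementary alternating-sum fact after expanding $\ell'_j(\lambda) = c_\lambda(j)$ and invoking Corollary \ref{C:c lambda support}: the nonzero entries of $c_\lambda$, indexed $i_1 < \dots < i_k$, are alternately $+1, -1, \dots, +1$ (odd length). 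The first inequality becomes $\sum_{s:\, i_s < j^*}(-1)^{s-1}u^-(\mu_{i_s}) \geq 0$, a prefix alternating sum of a strictly decreasing positive sequence, nonnegative by pairing consecutive terms from the left. The second becomes $\sum_{s:\, i_s \geq j^*}(-1)^{s-1}u(\mu_{i_s}) \geq 0$, a suffix alternating sum of a strictly increasing nonnegative sequence; since the global sign pattern ends in $+u(\mu_{i_k})$, pairing from the right when the first contributing index has odd parity (leaving the first term $+u(\mu_{i_{s_0}})\geq 0$ unpaired) and from the left otherwise shows this sum is $\geq 0$ as well.
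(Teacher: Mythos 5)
Your proposal is correct and is essentially the paper's own argument repackaged as a linear-programming dual certificate: after the same reduction to decreasing $b$, your majorant $v=\sum_{j}u^+(\mu_j)\,\ell'_j$ evaluated at $\lambda$ is exactly the partial sum $\sum_{i\ge k}\alpha_i c_\lambda(i)$ that the paper bounds, and your two alternating-sum checks (prefix sum $\le 0$, suffix sum $\ge 0$ via Corollary \ref{C:c lambda support} and Lemma \ref{L:c(mu_i)=e_i}) are precisely the paper's estimates \eqref{E:sum c lambda up to k-1} and \eqref{E:sum c lambda after k}. So the approach matches the paper's proof, with only the bookkeeping (pointwise domination $v\ge u^+$ plus constancy of $\langle v,x\rangle$ on $P(b)$, rather than a direct chain of inequalities for $\langle u^+,x\rangle$) presented differently.
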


\begin{theorem}\label{T:min at subvertex}
Let $q_*$ be the subvertex of $P(b)$.
Then $\langle u^+,q_* \rangle \geq 0$ for any $u \in \Ulpos$, and
\[
\min \{ \langle u^+,x \rangle : x \in P(b) \} \geq \langle u^+,q_* \rangle.
\]
If $\tfrac{1}{m} \sum_{i=1}^m b(i) \leq \frac{2}{m}-1$ then $q_*$ is a vertex of $P(b)$ and the inequality is an equality.
\end{theorem}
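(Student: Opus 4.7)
The plan is to apply weak LP duality to the minimisation of $\langle u^+, x\rangle$ over $P(b)$. For any $v = \sum_{i=0}^m y_i \ell_i \in U$ the pairing $\langle v, z\rangle$ depends on $z \in \RR^\L$ only through $L(z) \in \RR^{m+1}$, so I would first observe that $L(q_*) = (1, b)$ holds unconditionally (this follows from $\ell_i = 2\ell''_i - \ell''_0$ together with Lemma \ref{L:P(b) in terms of ell''}, and does not require $q_* \in P(b)$). Consequently $\langle v, x\rangle = \langle v, q_*\rangle$ for every $x \in P(b)$. If one produces $v \in U$ with $v(\lambda) \leq u^+(\lambda)$ for every $\lambda \in \L$ and $v(\nu_i) = u^+(\nu_i)$ for $i = 0, \ldots, m$, then using $x \geq 0$ on $P(b)$ and that $q_*$ is supported on $\{\nu_0, \ldots, \nu_m\}$,
\[
\langle u^+, x\rangle \;\geq\; \langle v, x\rangle \;=\; \langle v, q_*\rangle \;=\; \sum_{i=0}^m b''(i)\, u^+(\nu_i) \;=\; \langle u^+, q_*\rangle,
\]
which is the bound to be established.

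Such a $v$ exists uniquely: Lemma \ref{L:explicit ell''} shows the matrix $(\ell''_i(\nu_j))_{0 \leq i,j \leq m}$ has a row of $1$'s in row $0$ and otherwise equals the identity, so the evaluation map $U \to \RR^{m+1}$, $v \mapsto (v(\nu_0), \ldots, v(\nu_m))$, is a linear isomorphism. To verify $v \leq u^+$ I would rewrite $u = \sum a_i \ell_i$ (with $a_i > 0$ for $i \geq 1$) in the form $u(\lambda) = u(\nu_0) + 2\sum_{j \in S_\lambda} a_j$ where $S_\lambda = \{j \in [m] : \lambda(j) = 0\}$. This exhibits $u$ as monotone decreasing on $\L$, gives $u^+(\nu_j) \geq u^+(\nu_0)$ for all $j$, and shows that every $w \in U$ admits the analogous expansion $w(\lambda) = w(\nu_0) + \sum_{j \in S_\lambda}(w(\nu_j) - w(\nu_0))$.

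The pointwise inequality $v \leq u^+$ is the main obstacle and I would split on the sign of $u(\nu_0)$. If $u(\nu_0) \geq 0$, then $u^+(\nu_i) = u(\nu_i)$ for all $i$, so uniqueness forces $v = u$ and the bound is trivial. If $u(\nu_0) < 0$, set $J^+ = \{j \in [m] : u(\nu_j) > 0\}$ and $J^- = [m] \setminus J^+$; then $v(\nu_0) = 0$, $v(\nu_j) = u(\nu_j)$ for $j \in J^+$ and $v(\nu_j) = 0$ for $j \in J^-$, whence $v(\lambda) = \sum_{j \in S_\lambda \cap J^+} u(\nu_j) \geq 0$. When $S_\lambda \cap J^+ = \emptyset$ this gives $v(\lambda) = 0 \leq u^+(\lambda)$; otherwise the identity $u(\nu_j) = u(\nu_0) + 2a_j$ yields
\[
u(\lambda) - v(\lambda) = u(\nu_0)\bigl(1 - |S_\lambda \cap J^+|\bigr) + 2\sum_{j \in S_\lambda \cap J^-} a_j,
\]
and both summands are non-negative (the first because $u(\nu_0) < 0$ and $1 - |S_\lambda \cap J^+| \leq 0$, the second because each $a_j > 0$), so $v(\lambda) \leq u(\lambda) \leq u^+(\lambda)$.

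The same sign split gives $\langle u^+, q_*\rangle \geq 0$: when $u(\nu_0) < 0$ the summand $b''(0) u^+(\nu_0)$ vanishes and every remaining term is non-negative since $b''(i) \geq 0$ for $i \geq 1$ under $\|b\|_\infty \leq 1$; when $u(\nu_0) \geq 0$, substituting $b''(0) = 1 - \sum_{i \geq 1} b''(i)$ rewrites the sum as $u^+(\nu_0) + \sum_{i \geq 1} b''(i)(u^+(\nu_i) - u^+(\nu_0))$, manifestly non-negative by the monotonicity noted above. Finally, Proposition \ref{P:q_* in P(b)} shows that under $\tfrac{1}{m} \sum_{i=1}^m b(i) \leq \tfrac{2}{m}-1$ the subvertex $q_*$ lies in $P(b)$, so it is a feasible primal point achieving the lower bound, and the inequality becomes an equality.
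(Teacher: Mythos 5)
Your proof is correct, and it is organised differently from the paper's. The paper works in the basis $\ell''_0,\dots,\ell''_m$, splits on the sign of $\alpha_0=u(\nu_0)$ exactly as you do, and in the case $\alpha_0<0$ proves the bound by a direct term-dropping estimate: it restricts the sum $\sum_\lambda u^+(\lambda)x(\lambda)$ to the set $\Lambda(u)=\bigcup_{i\in I(u)}\L(i)\subseteq\supp(u^+)$ (your $I(u)$ is exactly your $J^+$), discards the nonnegative contributions with $i\notin I(u)$, and then uses $\sum_{\lambda\in\Lambda(u)}x(\lambda)\le\sum_{i\in I(u)}b''(i)$ together with $\alpha_0<0$. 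You instead produce a dual certificate: a vector $v\in U$ with $v\le u^+$ pointwise and $v(\nu_i)=u^+(\nu_i)$ for all $i$, and then exploit that any functional from $U$ is constant on $P(b)$ and takes the value $\langle v,q_*\rangle$ there because $L(q_*)=(\begin{smallmatrix}1\\b\end{smallmatrix})$ holds unconditionally. The ingredients are the same (the expansion $w(\lambda)=w(\nu_0)+\sum_{j\in S_\lambda}(w(\nu_j)-w(\nu_0))$ is precisely the $\ell''$-basis expansion of Lemmas \ref{L:explicit ell''} and \ref{L:ell-positive in ell''}, and the positivity inputs are $a_i>0$ and $b''(i)\ge0$ for $i\ge1$), but the pointwise minorant inequality $v\le u^+$, verified through the identity $u(\lambda)-v(\lambda)=u(\nu_0)\bigl(1-|S_\lambda\cap J^+|\bigr)+2\sum_{j\in S_\lambda\cap J^-}a_j$, does not appear in the paper and is the genuinely new organising step. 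What your route buys is a weak-duality style argument in which the inequality reduces to one clean pointwise verification plus linear algebra, and it makes explicit the useful fact that the subvertex always satisfies the defining linear equations of $P(b)$ (only the sign of $b''(0)$ can fail); what the paper's route buys is that it never needs to construct or invert anything, working entirely with the given coordinates. One small bookkeeping point: the claim $L(q_*)=(\begin{smallmatrix}1\\b\end{smallmatrix})$ follows from Lemma \ref{L:explicit ell''} and Definition \ref{D:b''} (the computation carried out inside the proof of Proposition \ref{P:q_* in P(b)}), not from Lemma \ref{L:P(b) in terms of ell''}, which only characterises membership in $P(b)$; the claim itself is correct. Your handling of the nonnegativity of $\langle u^+,q_*\rangle$ and of the equality case via Proposition \ref{P:q_* in P(b)} matches the paper.
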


Thus, {\em all} the functions $f(x)=\langle u^+,x\rangle$, where $u \in \Ulpos$, attain their maximum on $P(b)$ at the supervertex.
It can be shown by means of examples that different such functions $f$ attain their minimum at different vertices of $P(b)$, so Theorem \ref{T:min at subvertex} is as strong as can be.
The fact that the minimum is attained uniformly at the vertex $q_*$ for all polytopes $P(b)$ for which $b$ belongs to a neighbourhood of the corner $(-1,\dots,-1)$ of the cube $[-1,1]^m$ is very surprising.

\begin{cor}\label{C:sum of ell-pos+ at supervertex}
Let $q^*$ and $q_*$ be the supervertex and the subvertex of $P(b)$.
Let $u_1,\dots,u_n \in U_{\ell\text{-pos}}$ and set $f=\sum_{i=1}^n u_i^+$.
Then
\begin{eqnarray*}
&& \max \{ \langle f,x \rangle \ : x \in P(b) \} \ = \ \langle f , q^* \rangle \\
&& \min \{ \langle f,x \rangle \ : x \in P(b) \} \ \geq \ \langle f, q_* \rangle.
\end{eqnarray*}
If $\frac{1}{m} \sum_{i=1}^m b(i) \leq \tfrac{2}{m}-1$ then $q_* \in P(b)$ and equality holds.
\hfill $\Box$
\end{cor}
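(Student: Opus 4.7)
The plan is to deduce this corollary directly from Theorems \ref{T:max at supervertex} and \ref{T:min at subvertex} by exploiting the crucial fact that the extremizing vectors $q^*$ and $q_*$ are \emph{the same} for every truncated $\ell$-positive function of the form $\langle u_i^+, \cdot \rangle$, independently of the choice of $u_i \in U_{\ell\text{-pos}}$. In general the maximum of a sum of functions is bounded above by the sum of the individual maxima, with equality only when the maximisers coincide; here Theorem \ref{T:max at supervertex} supplies exactly this coincidence, so linearity of the inner product allows us to sum the pointwise inequalities term by term.

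First I would observe that $f = \sum_{i=1}^n u_i^+$ is a well-defined vector in $\RR^\L$, and that by bilinearity of $\langle -,- \rangle$ we have $\langle f,x\rangle = \sum_{i=1}^n \langle u_i^+ , x\rangle$ for every $x \in \RR^\L$. Applying Theorem \ref{T:max at supervertex} to each $u_i$ separately gives
\[
\langle u_i^+ , x\rangle \leq \langle u_i^+ , q^*\rangle \qquad \text{for every } x \in P(b),
\]
and summing over $i$ yields $\langle f,x\rangle \leq \langle f,q^*\rangle$. Since $q^* \in P(b)$ by Proposition \ref{P:supervertex is a vertex} (and Definition \ref{D:supervertex - general case} in the non-decreasing case), this upper bound is attained at $x=q^*$, proving the first identity.

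For the lower bound, Theorem \ref{T:min at subvertex} provides both the inequalities $\langle u_i^+ , q_*\rangle \geq 0$ and $\langle u_i^+ , x\rangle \geq \langle u_i^+ , q_*\rangle$ for every $x \in P(b)$. Summing the second of these over $i$ gives $\langle f,x\rangle \geq \langle f,q_*\rangle$, and summing the first shows $\langle f,q_*\rangle \geq 0$. Under the extra hypothesis $\tfrac{1}{m}\sum_{i=1}^m b(i) \leq \tfrac{2}{m}-1$, Proposition \ref{P:q_* in P(b)} (and the last assertion of Theorem \ref{T:min at subvertex}) yields $q_* \in P(b)$, so that the lower bound is attained at $x = q_*$ and the inequality becomes an equality.

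I do not foresee a genuine obstacle: all the non-trivial content is already packaged into Theorems \ref{T:max at supervertex} and \ref{T:min at subvertex}, and the corollary is essentially a bookkeeping argument based on the linearity of the inner product and the uniformity of the extremizer $q^*$ (resp.\ $q_*$) across the family $\{u_i^+\}_{i=1}^n$. The only point requiring a small amount of care is that one cannot pass the truncation operator through the sum, i.e.\ $f$ itself need not equal $(\sum_i u_i)^+$; but this is irrelevant because the argument applies to each summand $u_i^+$ individually before summation.
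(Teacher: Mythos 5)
Your proposal is correct and is exactly the argument the paper intends: the corollary is stated with no written proof precisely because it follows from Theorems \ref{T:max at supervertex} and \ref{T:min at subvertex} by summing the inequalities for each $u_i^+$, using linearity of the inner product and the fact that the extremizer ($q^*$, resp.\ $q_*$ under the stated condition via Proposition \ref{P:q_* in P(b)}) lies in $P(b)$ and is the same for every summand. Nothing further is needed.
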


\begin{prop}\label{P:Ulpos closure to scalars and addition}
The set $\Ulpos$ is closed under addition of vectors and multiplication by positive scalars.
Also, if $u \in \Ulpos$ then $u+c\ell_0 \in \Ulpos$ for any $c \in \RR$.
The set $(\Ulpos)^+$ is closed under addition of vectors and  multiplication by positive scalars.
\end{prop}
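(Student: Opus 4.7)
The plan is to verify each claim by direct unpacking of the definitions, relying only on the fact that $\ell_0,\dots,\ell_m$ is an orthogonal (hence linearly independent) basis of $U$, so that the representation $u=\sum_{i=0}^m a_i \ell_i$ is unique and the condition ``$a_1,\dots,a_m>0$'' is unambiguous.

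For the claims about $\Ulpos$: take $u=\sum a_i\ell_i$ and $v=\sum b_i\ell_i$ with $a_i,b_i>0$ for $i\geq 1$. Then $u+v=\sum(a_i+b_i)\ell_i$, and $a_i+b_i>0$ for $i\geq 1$ since a sum of positives is positive; hence $u+v\in\Ulpos$. For a positive scalar $c>0$ we have $cu=\sum(ca_i)\ell_i$ with $ca_i>0$ for $i\geq 1$, so $cu\in\Ulpos$. Finally, for any $c\in\RR$ we have $u+c\ell_0=(a_0+c)\ell_0+\sum_{i\geq 1}a_i\ell_i$; only the coefficient of $\ell_0$ is modified, so $u+c\ell_0\in\Ulpos$.

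For the claims about $(\Ulpos)^+$: closure under addition is immediate from the definition, since a sum $\sum_{i=1}^k u_i^+ + \sum_{j=1}^\ell v_j^+$ is again a finite sum of truncations $w^+$ of elements $w\in\Ulpos$. For closure under multiplication by $c>0$, the key observation is that truncation commutes with positive scaling: for any $w\in\RR^\L$ and any $\lambda\in\L$,
\[
(cw)^+(\lambda)=\max\{cw(\lambda),0\}=c\,\max\{w(\lambda),0\}=c\cdot w^+(\lambda).
\]
Therefore $c\sum_{i=1}^k u_i^+=\sum_{i=1}^k (cu_i)^+$, and by the first part each $cu_i$ lies in $\Ulpos$, so the result lies in $(\Ulpos)^+$.

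There is no real obstacle here; the only mild point worth flagging is that the condition defining $\Ulpos$ refers to the coefficients in the basis $\ell_0,\dots,\ell_m$, so one must note that these coefficients are uniquely determined, which follows from the orthogonality relations $\langle \ell_i,\ell_j\rangle=2^m\delta_{ij}$ recorded after Definition \ref{D:ell_i and U}.
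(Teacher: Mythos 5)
Your proof is correct and is exactly the argument the paper has in mind: its own proof is simply ``Immediate from the definitions,'' and your verification spells out those routine checks (coefficientwise positivity, $(cw)^+=c\,w^+$ for $c>0$, and uniqueness of coefficients via linear independence of the $\ell_i$). Nothing further is needed.
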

\begin{proof}
Immediate from the definitions.
\end{proof}

Recall the partial order $\preceq$ on $\L$, see Section \ref{SS:L}.

\begin{lemma}\label{L:supp u closed down}
Let $u \in \Ulpos$.
Then as a function $u \colon \L \to \RR$ it is order reversing, i.e $u(\lambda) \leq u(\lambda')$ if $\lambda' \preceq \lambda$.
In particular $\supp(u^+) \subseteq \L$ is closed downwards with respect to $\preceq$.
That is, if $\lambda \in \supp(u^+)$ and $\lambda' \preceq \lambda$ then $\lambda' \in \supp(u^+)$.
\end{lemma}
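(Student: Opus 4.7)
The plan is a direct computation from the definition. Write $u = a_0 \ell_0 + \sum_{i=1}^m a_i \ell_i$ with $a_1,\dots,a_m>0$, so that
\[
u(\lambda) = a_0 + \sum_{i=1}^m a_i (-1)^{\lambda(i)}.
\]
Now suppose $\lambda' \preceq \lambda$ in $\L$. By the definition of the partial order in Section \ref{SS:L}, $\supp(\lambda')\subseteq\supp(\lambda)$, equivalently $\lambda'(i)\leq \lambda(i)$ for every $1\leq i\leq m$. Hence for each $i$ either $\lambda(i)=\lambda'(i)$, in which case $(-1)^{\lambda(i)}-(-1)^{\lambda'(i)}=0$, or $\lambda(i)=1$ and $\lambda'(i)=0$, in which case $(-1)^{\lambda(i)}-(-1)^{\lambda'(i)}=-2$.

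Subtracting termwise, the $a_0\ell_0$ contribution cancels and we obtain
\[
u(\lambda)-u(\lambda') = \sum_{i=1}^m a_i\bigl((-1)^{\lambda(i)}-(-1)^{\lambda'(i)}\bigr) \leq 0,
\]
since every summand is the product of the positive number $a_i$ with a non-positive number. This proves that $u$ is order-reversing.

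For the second assertion, if $\lambda \in \supp(u^+)$ then $u(\lambda)>0$, and if $\lambda' \preceq \lambda$ then by what we just proved $u(\lambda')\geq u(\lambda)>0$, so $\lambda' \in \supp(u^+)$. There is no genuine obstacle here; the lemma is essentially a sign observation together with the defining positivity condition $a_1,\dots,a_m>0$ of $\Ulpos$.
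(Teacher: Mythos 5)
Your proof is correct and follows essentially the same route as the paper: both expand $u(\lambda)=a_0+\sum_{i=1}^m a_i(-1)^{\lambda(i)}$ and use $a_1,\dots,a_m>0$ together with $\supp(\lambda')\subseteq\supp(\lambda)$ to get monotonicity, the support statement then being immediate. Your termwise subtraction is just a slight rephrasing of the paper's grouping of the sum over $i\in\supp(\lambda)$ versus $i\notin\supp(\lambda)$.
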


\begin{proof}
For any $\lambda \in \L$
\[
u(\lambda) = a_0+\sum_{i=1}^m a_i\ell_i(\lambda) = a_0+\sum_{i=1}^m (-1)^{\lambda(i)}a_i =
a_0 + \sum_{i \notin \supp(\lambda)} a_i - \sum_{i \in \supp(\lambda)} a_i.
\]
Since $a_1,\dots,a_m >0$ it is clear that if $\lambda' \preceq \lambda$, i.e $\supp(\lambda') \subseteq \supp(\lambda)$, then $u(\lambda') \geq u(\lambda)$.
\end{proof}

Recall the bases  $\ell'_0,\dots,\ell_m'$ and $\ell''_0,\dots,\ell''_m$ of $U$ from Definitions \ref{D:ell_i and U} and \ref{D:ell''}.

\begin{lemma}\label{L:ell-positive in ell'}
Let $u=\sum_{i=0}^m \alpha_i \ell'_i$ be an element of $U$.
Then $u \in \Ulpos$ if and only if $\alpha_0<\dots< \alpha_m$.
\end{lemma}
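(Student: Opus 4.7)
The plan is to leverage the explicit change-of-basis formula between $\ell_0,\dots,\ell_m$ and $\ell'_0,\dots,\ell'_m$ already noted right after Definition \ref{D:define ell'}, namely
\[
\ell_k \ = \ -\sum_{i=0}^{k-1} \ell'_i \ + \ \sum_{i=k}^m \ell'_i, \qquad 0 \leq k \leq m,
\]
in order to translate the defining positivity condition on the $\ell$-coordinates $a_1,\dots,a_m$ into a condition on the $\ell'$-coordinates $\alpha_0,\dots,\alpha_m$.

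Concretely, I would start from an arbitrary $u = \sum_{k=0}^m a_k\ell_k \in U$, substitute the formula above, and collect coefficients of each $\ell'_i$. Because the coefficient of $\ell'_i$ in $\ell_k$ is $+1$ when $k \leq i$ and $-1$ when $k > i$, this yields
\[
u \ = \ \sum_{i=0}^m \alpha_i \ell'_i \qquad \text{with} \qquad \alpha_i \ = \ \sum_{k=0}^i a_k \ - \ \sum_{k=i+1}^m a_k.
\]
Since $\ell'_0,\dots,\ell'_m$ is a basis of $U$, these $\alpha_i$ are exactly the coordinates in the statement of the lemma. A one-line subtraction then gives the key identity
\[
\alpha_{i+1} - \alpha_i \ = \ 2\, a_{i+1}, \qquad 0 \leq i \leq m-1,
\]
so $a_1,\dots,a_m>0$ is equivalent to $\alpha_0 < \alpha_1 < \cdots < \alpha_m$, which is the claim.

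I do not expect any real obstacle: the argument is essentially elementary inversion of the triangular transition matrix $T^{-1}$ displayed in Example \ref{EX:L and L'}. The only thing worth pausing on is the fact that $a_0$ is left unconstrained in the definition of $\Ulpos$; this is consistent with the observation that shifting $a_0$ by a constant $c$ shifts every $\alpha_i$ by the same $c$, which preserves both $\ell$-positivity (by Proposition \ref{P:Ulpos closure to scalars and addition}) and the strict chain $\alpha_0 < \cdots < \alpha_m$.
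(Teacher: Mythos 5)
Your proof is correct and takes essentially the same route as the paper: the paper substitutes the definition of the $\ell'_i$ to get $a_i=\tfrac{\alpha_i-\alpha_{i-1}}{2}$ for $1\leq i\leq m$, while you invert the change of basis via $\ell_k=-\sum_{i<k}\ell'_i+\sum_{i\geq k}\ell'_i$ and arrive at the equivalent identity $\alpha_{i+1}-\alpha_i=2a_{i+1}$, so both arguments rest on the same one-line transition-matrix computation.
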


\begin{proof}
When presented $u=\sum_{i=0}^m a_i \ell_i$, one checks that $a_0=\tfrac{\alpha_0+\alpha_m}{2}$ and $a_i=\tfrac{\alpha_{i}-\alpha_{i-1}}{2}$ for $i=1,\dots,m$.
The lemma follows.
\end{proof}

\begin{lemma}\label{L:ell-positive in ell''}
Let $u=\sum_{i=0}^m \alpha_i \ell''_i$ be an element of $U$.
Then $u \in \Ulpos$ if and only if $\alpha_1,\dots,\alpha_m>0$ (and no condition on $\alpha_0$).
\end{lemma}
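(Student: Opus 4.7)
The plan is to mimic the proof of Lemma \ref{L:ell-positive in ell'}: express $u$ in the original $\ell$-basis using Definition \ref{D:ell''}, and read off the coefficients.

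First I would substitute $\ell''_0 = \ell_0$ and $\ell''_i = \tfrac{1}{2}(\ell_i + \ell_0)$ for $1 \leq i \leq m$ into $u = \sum_{i=0}^m \alpha_i \ell''_i$. This gives
\[
u = \alpha_0 \ell_0 + \sum_{i=1}^m \tfrac{\alpha_i}{2}(\ell_i + \ell_0) = \Big(\alpha_0 + \tfrac{1}{2}\sum_{i=1}^m \alpha_i\Big)\ell_0 + \sum_{i=1}^m \tfrac{\alpha_i}{2}\,\ell_i.
\]
Since $\ell_0, \ldots, \ell_m$ is a basis for $U$, the expansion $u = \sum_{i=0}^m a_i \ell_i$ has uniquely determined coefficients $a_0 = \alpha_0 + \tfrac{1}{2}\sum_{i=1}^m \alpha_i$ and $a_i = \alpha_i/2$ for $1 \leq i \leq m$.

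By Definition \ref{D:ell-positive vectors}, $u \in \Ulpos$ iff $a_1, \ldots, a_m > 0$ with no condition on $a_0$. Since $a_i = \alpha_i/2$, this is equivalent to $\alpha_1, \ldots, \alpha_m > 0$; and since the map $(\alpha_0, \alpha_1, \ldots, \alpha_m) \mapsto (a_0, a_1, \ldots, a_m)$ is bijective and $\alpha_0$ may be chosen to realise any value of $a_0$ once $\alpha_1, \ldots, \alpha_m$ are fixed, there is no condition on $\alpha_0$. This yields the claim.

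There is no serious obstacle; the entire content is a one-line change of basis calculation, entirely parallel to Lemma \ref{L:ell-positive in ell'}.
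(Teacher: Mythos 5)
Your proof is correct and is essentially the paper's argument: both amount to the same change-of-basis computation relating $u=\sum_i a_i\ell_i$ and $u=\sum_i \alpha_i\ell''_i$ via $a_i=\alpha_i/2$ for $i\geq 1$ and $a_0=\alpha_0+\tfrac12\sum_{i=1}^m\alpha_i$ (equivalently $\alpha_i=2a_i$, $\alpha_0=a_0-\sum_{i=1}^m a_i$), and then reading off Definition \ref{D:ell-positive vectors}. No gap.
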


\begin{proof}
Written $u=\sum_{i=0}^m a_i \ell_i$, one checks that $\alpha_i=2a_i$ and $\alpha_0=a_0-\sum_{i=1}^ma_i$.
\end{proof}

\begin{example}
Let us illustrate the proof of Theorem \ref{T:max at supervertex} when $m=4$.
Let $b \in \RR^4$ be such that $|b(i)| \leq 1$.
Assume further that $b$ is decreasing, i.e $b(1) \geq b(2) \geq b(3) \geq b(4)$.
We have seen that $P(b)$ is the solution set of the equations
\[
L'x=b' \qquad \text{and} \qquad x(\lambda) \geq 0
\]
where the rows of the matrix $L'$ are the vectors $\ell'_0,\dots,\ell_4'$ in Definition \ref{D:define ell'}.
In the present example we will refer to Example \ref{EX:L and L'} where we describe $L'$ and $b'$ explicitly.
The supervertex of $P(b)$ has the form $q^*=\sum_{i=0}^4 b'(i) \cdot e_{\mu_i}$, see Definitions \ref{D:bprime} and \ref{D:mu_i} and \ref{D:supervertex - descending case}.

Let $u=\sum_{i=0}^4 \alpha_i \ell_i'$ be $\ell$-positive.
By Lemma \ref{L:ell-positive in ell'}, $\alpha_0<\dots<\alpha_4$ and there is  $k$ such that 
\[
\alpha_0 < \dots <\alpha_{k-1} \leq \ 0 \ < \alpha_k < \dots < \alpha_4.
\]
We will write $\alpha=(\alpha_0,\dots,\alpha_4)$ for the row vector in $\RR^5$.
Then $u=\alpha \cdot L'$ and $u^+=\alpha \cdot L'_{\supp(u^+)}$ where $L'_{\supp(u^+)}$ is the matrix obtained from $L'$ by setting to zero the $\lambda$-th columns for all $\lambda \notin \supp(u^+)$.
Thus, for any $x \in P(b)$ considered as a column vector,
\[
\langle u^+,x \rangle = \alpha \cdot L'_{\supp(u^+)} \cdot x.
\]
Since the $\mu_i$-th column of $L'$ is the standard basis vector $e_i$, see Example \ref{EX:L and L'} and compare with Lemma \ref{L:c(mu_i)=e_i}, $u(\mu_i)=\alpha_i$ so $\mu_i \in \supp(u^+) \iff i \geq k$.
Since $q^*$ is supported by $\mu_0,\dots,\mu_4$
\[
\langle u^+,q^* \rangle = 
\alpha \cdot (L'_{\supp(u^+)} \cdot q_*) =
\alpha \cdot (0,\dots,0,b'(k),\dots ,b'(4)).
\]
Write $\alpha_-=(\alpha_0,\dots,\alpha_{k-1},0,\dots,0)$ and $\alpha_+ = (0,\dots,0,\alpha_k,\dots,\alpha_4)$.
Then $\alpha=\alpha_-+\alpha_+$.
Observe that the non-zero entries of each  column of $L'$ form a sequence $1,-1,1,\dots,1$ of alternating $\pm 1$, compare with Corollary \ref{C:c lambda support}.
We claim that all the entries of the (row) vector
\[
(*) \qquad \qquad \alpha_- \cdot L'
\]
are non-positive.
Indeed, its $\lambda$-th entry is equal to the product of $\alpha_-$ with the $\lambda$-th column of $L'$, which has the form $\alpha_{i_1} - \alpha_{i_2}+\alpha_{i_3}-\cdots\pm\alpha_{i_t}$ where $0\leq i_1<i_2<\dots < i_t\leq k-1$.
Since $\alpha$ is increasing and $\alpha_i \leq 0$ for $i \leq k-1$, collecting the terms in pairs shows that this is a sum of negative numbers (if $t$ is even) and possibly a non-positive last term $\alpha_{i_t}$ (if $t$ is odd).
Similarly, we claim that all the entries of the vector
\[
(**) \qquad \qquad \alpha_+ \cdot L'
\]
are non-negative.
The $\lambda$-th entry is the product of $\alpha_+$ with the $\lambda$-th column of $L'$ which has the form $\alpha_{i_t}-\alpha_{i_{t-1}} + \dots \pm \alpha_{i_1}$ where $k \leq i_1< \dots < i_t \leq n$.
Since $\alpha$ is increasing and $\alpha_i >0$ for $i \geq k$, collecting terms in pairs starting from the last term shows that this is the sum of positive numbers and possibly a positive first term $\alpha_{i_1}$. 

Finally, suppose that $x \in P(b)$.
Then $x(\lambda) \geq 0$ for all $\lambda$ and $L'x=b'$.
Then 
\begin{multline*}
\langle u^+,x \rangle = 
\alpha \cdot L'_{\supp(u^+)} \cdot x = \alpha_- \cdot L'_{\supp(u^+)} \cdot x + \alpha_+ \cdot L'_{\supp(u^+)} \cdot x 
\stackrel{(*)}{\leq }
\alpha_+ \cdot L'_{\supp(u^+)} \cdot x 
\\
\stackrel{(**)}{\leq }
\alpha_+ \cdot L' \cdot x = 
(0,\dots,0,\alpha_k,\dots,\alpha_4) \cdot b' = \alpha \cdot (0,\dots,0,b'(k),\dots,b'(n)) =
\langle u^+,q^*\rangle.
\end{multline*}
\end{example}

\begin{proof}[Proof of Theorem \ref{T:max at supervertex}]
First, choose some $\sigma \in \Sigma_m$ such that $\sigma_*(b)$ is decreasing.
Since $\sigma_*$ acts by permuting the factors of $\RR^\L$ it is clear that $\sigma_*(u^+)=\sigma_*(u)^+$.
Also, $\sigma_*$ is an orthogonal transformation so for any $x \in P(b)$
\[
\langle u^+,x \rangle = 
\langle \sigma_*(u^+),\sigma_*(x) \rangle = 
\langle \sigma_*(u)^+,\sigma_*(x) \rangle.
\]
It follows from Proposition \ref{P:Sigma_m permutes ell_i} that $\sigma_*(u)$ is $\ell$-positive.
Since $\sigma_* \colon P(b) \to P(\sigma_*(b))$ is a linear homeomorphism, we may replace $b$ with $\sigma_*(b)$ and $P(b)$ with $P(\sigma_*(b))$ and $u$ with $\sigma_*(u)$.
So for the rest of the proof we assume that $b$ is decreasing.
Also, to avoid triviality we assume that $u^+ \neq 0$, namely $\supp(u^+) \neq \emptyset$.
By Lemma \ref{L:ell-positive in ell'},
\[
u=\sum_{i=0}^m \alpha_i \ell_i' \qquad \text{where} \qquad \alpha_0<\alpha_1<\dots<\alpha_m.
\]
By definition of the elements $\mu_0,\dots,\mu_m \in \L$ we have $\mu_0 \succeq \mu_1 \succeq \dots \succeq \mu_m$.
Since $\supp(u^+) \neq \emptyset$ and $\mu_m$ is the minimum of $\L$, it follows from Lemma \ref{L:supp u closed down}  that there is a smallest index $k$ such that $\mu_k \in \supp(u^+)$. 
Thus, 
\[
\mu_i \in \supp(u^+) \iff i \geq k.
\]
By Lemma \ref{L:c(mu_i)=e_i} and Definition \ref{D:c_lambda} (of $c_\lambda$) 
\[
u(\mu_j) = 
\sum_{i=0}^m \alpha_i\ell_i'(\mu_j) = 
\sum_{i=0}^m \alpha_i c_{\mu_j}(i) =
\sum_{i=0}^m \alpha_i \delta_{i,j} =
\alpha_j.
\]
By the choice of $k$ we get that $u^+(\mu_i)=0$ if and only if $0 \leq i \leq k-1$ and that 
\[
\alpha_0 < \alpha_1 < \dots < \alpha_{k-1} \ \leq 0 < \ \alpha_k < \dots < \alpha_m.
\]
Therefore, see Definition \ref{D:supervertex - descending case},
\begin{equation}\label{E:u+ dt q}
\langle u^+,q^* \rangle = 
\sum_{\lambda \in \L} u^+(\lambda)q^*(\lambda) = 
\sum_{i=0}^m u^+(\mu_i) q^*(\mu_i) = 
\sum_{i=k}^m u(\mu_i)q^*(\mu_i) = 
\sum_{i=k}^m \alpha_i b'(i).
\end{equation}
Consider some $\lambda \in \L$.
By Definition \ref{D:c_lambda} and with $k$ defined above
\begin{equation}\label{E:u(lambda) sum k of c lambda}
u(\lambda)=
\sum_{i=0}^m \alpha_i \ell_i'(\lambda) = 
\sum_{i=0}^m \alpha_i c_\lambda(i) =
\sum_{i=0}^{k-1} \alpha_i c_\lambda(i) + \sum_{i=k}^m \alpha_i c_\lambda(i).
\end{equation}
Let $I=\{i : c_\lambda(i) \neq 0\}$.
Let $I_-=I \cap \{0,\dots,k-1\}$.
By Corollary \ref{C:c lambda support}, $I_-=\{i_1<\dots<i_t\}$ and the sequence $c_\lambda(i_1),\dots,c_\lambda(i_t)$ has the form $1,-1,1,-1,\dots$.
Therefore
\[
\sum_{i=0}^{k-1} \alpha_i c_\lambda(i) = (\alpha_{i_1}-\alpha_{i_2})+(\alpha_{i_3}-\alpha_{i_4})+\cdots.
\]
Recall that $\alpha_0,\alpha_1,\dots$ is increasing.
If $t$ is even then this is a sum (possibly empty) of negative terms, and if $t$ is odd then this is a sum of negative terms and of $\alpha_{i_t}$ which is non-positive since $i_t<k$.
We deduce that 
\begin{equation}\label{E:sum c lambda up to k-1}
\sum_{i=0}^{k-1} \alpha_i c_\lambda(i) \leq 0.
\end{equation}
Set $I_+=I \cap \{k,\dots,m\}$.
By Corollary \ref{C:c lambda support}, $I_+=\{i_t<i_{t-1} < \dots < i_1\}$ where $k \leq i_t$ and $i_1 \leq m$ and the sequence $c_\lambda(i_1),\dots,c_\lambda(i_t)$ has the form $1,-1,1,-1,\dots$ and therefore
\[
\sum_{i=k}^{m} \alpha_i c_\lambda(i) = (\alpha_{i_1}-\alpha_{i_2})+(\alpha_{i_3}-\alpha_{i_4})+\cdots.
\]
If $t$ is even then this is a sum (possibly empty) of positive terms (since $\alpha_0,\alpha_1,\dots$ is increasing) and if $t$ is odd then it is a sum of positive terms and $\alpha_{i_t}$ which is also positive since $i_t \geq k$.
We deduce that 
\begin{equation}\label{E:sum c lambda after k}
\sum_{i=k}^m \alpha_i c_\lambda(i) \geq 0.
\end{equation}
Consider an arbitrary $x \in P(b)$.
By definition of $u^+$ and equation \eqref{E:u(lambda) sum k of c lambda}
\begin{multline*}
\langle u^+,x \rangle = 
\sum_{\lambda \in \L} u^+(\lambda)x(\lambda) = 
\sum_{\lambda \in \supp(u^+)} u(\lambda)x(\lambda) = 
\\
\sum_{\lambda \in \supp(u^+)} x(\lambda) \sum_{i=0}^{k-1} \alpha_i c_\lambda(i) + \sum_{\lambda \in \supp(u^+)} x(\lambda) \sum_{i=k}^{m} \alpha_i c_\lambda(i).
\end{multline*}
Since $x(\lambda) \geq 0$, equations \eqref{E:sum c lambda up to k-1} and \eqref{E:sum c lambda after k} allow us to continue the estimate of $\langle u^+,x \rangle $
\begin{multline*}
\leq 
\sum_{\lambda \in \supp(u^+)} x(\lambda) \sum_{i=k}^{m} \alpha_i c_\lambda(i)
\leq 
\sum_{\lambda \in \L} x(\lambda) \sum_{i=k}^{m} \alpha_i c_\lambda(i)
=
\sum_{i=k}^m \alpha_i \sum_{\lambda \in \L} \ell_i'(\lambda) x(\lambda) =
\\
\sum_{i=k}^m \alpha_i \langle \ell_i',x \rangle = 
\sum_{i=k}^m \alpha_i b'(i) = 
\langle u^+,q^* \rangle.
\end{multline*}
This completes the proof.
\end{proof}

\begin{proof}[Proof of Theorem \ref{T:min at subvertex}]
The last statement in the theorem follows immediately from Proposition \ref{P:q_* in P(b)} and the first statement.
So it remain to prove the inequality and that $\langle u^+,q_*\rangle \geq 0$.

By Lemma \ref{L:ell-positive in ell''}, $u=\sum_{i=0}^m \alpha_i \ell''_i$ where $\alpha_1,\dots,\alpha_m >0$.
Since $\|b\|_\infty \leq 1$ it follows from Definition \ref{D:b''} that $b''(i) \geq 0$ for all $i \geq 1$.
Suppose that $x \in \RR^\L$ is a vector such that $\langle \ell''_0,x\rangle=1$ and $\langle \ell''_i,x\rangle=b''(i)$ for all $i \geq 1$.
Then 
\[
(*) \qquad
\langle u,x\rangle = \alpha_0+\sum_{i=1}^m \alpha_i b''(i).
\]
By Definition \ref{D:subvertex} and Lemma \ref{L:explicit ell''}, $q_*$ satisfies these conditions.
By Lemma \ref{L:P(b) in terms of ell''}, this is also the case for any $x \in P(b)$.
Suppose that $\alpha_0 \geq 0$.
Then $\langle u,q_* \rangle \geq 0$ by $(*)$.
Also, $u(\lambda)=\sum_{i=0}^m \alpha_i \ell_i''(\lambda) \geq 0$ for all $\lambda$ (since $\ell''_i(\lambda) \geq 0$), so $u^+=u$.
Thus, if $x \in P(b)$ then $\langle u^+,x \rangle = \langle u,x \rangle  \stackrel{(*)}{=} \langle u,q_* \rangle = \langle u^+,q_* \rangle$.
This prove the theorem in the case $\alpha_0 \geq 0$.
So for the rest of the proof we assume that $\alpha_0<0$.

Lemma \ref{L:explicit ell''} shows that
\begin{equation}\label{E:u at nu}
u(\nu_0)=\alpha_0 \qquad \text{and} \qquad u(\nu_i)=\alpha_0+\alpha_i \text{ for $i \geq 1$}.
\end{equation}
Notice that $\nu_0 \notin \supp(u^+)$ since  we assume that $\alpha_0<0$.
Set
\[
I(u) = \{ i : \nu_i \in \supp(u^+), 0 \leq i \leq m\} 
\]
Then $I(u) \subseteq \{1,\dots,m\}$ and since $b''(i) \geq 0$ for $i \geq 1$,
\begin{equation}\label{E:u+ at subvertex}
\langle u^+,q_* \rangle =
\sum_{i=0}^m b''(i) \cdot u^+(\nu_i) = \sum_{i \in I(u)} b''(i) \cdot u(\nu_i) \geq 0.
\end{equation}
This proves the first statement of the theorem.
Recall from Section \ref{SS:L} the convention that $\lambda(0)=0$ for all $\lambda \in \L$.
For any $1 \leq i \leq m$ set 
\begin{equation}\label{E:def L(i)}
\L(i) = \{ \lambda \in \L : \lambda(i)=0\} \stackrel{\text{(Lemma \ref{L:explicit ell''})}}{=} 
\{ \lambda \in \L : \ell''_i(\lambda)=1\}.
\end{equation}
If $i \in I(u)$ and  $\lambda \in \L(i)$ then $u(\lambda)=\sum_{j=0}^m \alpha_j \ell_j''(\lambda) = \alpha_0+\alpha_i+\sum_{j \neq 0,i} \alpha_j \ell_j''(\lambda) \geq \alpha_0+\alpha_i=u(\nu_i) > 0$. 
We deduce that 
\begin{equation}\label{E:Lambda subset supp(u)}
\Lambda(u) \stackrel{\text{def}}{=} \bigcup_{i \in I(u)} \L(i) \ \subseteq \ \supp(u^+).
\end{equation}
Consider some $x \in P(b)$.
Since $x(\lambda) \geq 0$ for all $\lambda$ 
\begin{equation}\label{E:sum x(lambda) in Lambda(u)}
\sum_{\lambda \in \Lambda(u)} x(\lambda) \leq 
\sum_{i \in I(u)} \sum_{\lambda \in \L(i)} x(\lambda) =
\sum_{i \in I(u)} \langle \ell''_i,x \rangle 
\stackrel{\text{(Lemma \ref{L:P(b) in terms of ell''})}}{=} 
\sum_{i \in I(u)} b''(i).
\end{equation}
Since $\ell_0''(\lambda)=1$ by Lemma \ref{L:explicit ell''}, and since $\ell''_i(\lambda)=1 \iff \lambda \in \L(i)$ 
\begin{multline*}
\langle u^+ ,x \rangle =
\sum_{\lambda \in \supp(u^+)} u(\lambda) x(\lambda) \stackrel{\eqref{E:Lambda subset supp(u)}}{\geq}
\sum_{\lambda \in \Lambda(u)} u(\lambda) x(\lambda) =
\sum_{\lambda \in \Lambda(u)}  \sum_{i=0}^m  \alpha_i \ell_i''(\lambda)x(\lambda) =
\\
\alpha_0 \sum_{\lambda \in \Lambda(u)} x(\lambda) + \sum_{i=1}^m \alpha_i \sum_{\lambda \in \Lambda(u)} \ell_i''(\lambda)x(\lambda) \geq
\alpha_0 \sum_{\lambda \in \Lambda(u)} x(\lambda) + \sum_{i \in I(u) } \alpha_i \sum_{\lambda \in \L(i)} \ell_i''(\lambda)x(\lambda) =
\\
\alpha_0 \sum_{\lambda \in \Lambda(u)} x(\lambda) + \sum_{i \in I(u)} \alpha_i \langle \ell_i'',x \rangle.
\end{multline*}
Thanks to \eqref{E:sum x(lambda) in Lambda(u)}, \eqref{E:u at nu} and to Lemma \ref{L:P(b) in terms of ell''}, and since $\alpha_0<0$ and $b''(i) \geq 0$ for all $i \geq 1$, we can continue the estimate
\[
\geq 
\alpha_0 \sum_{i \in I(u)} b''(i) + \sum_{i \in I(u)} \alpha_i b''(i) =
\sum_{i \in I(u)} u(\nu_i) b''(i) = \langle u^+,q_* \rangle.
\]
This completes the proof of the theorem. 
\end{proof}

\section{Functions from $\L^n$}
\label{S:truncated ell positive functions}

We fix $n >0$ and an $\L$-labelled tree $\T$, see Section \ref{SS:extension F tree}.
We start this section with a simple observation about symmetric truncated $\ell$-positive functions $F \colon \L^n \to \RR$, see Definition \ref{D:symmetric L positive}.
%

\begin{prop}\label{P:truncated ell positive closed under sums and products}
The collection of (symmetric) truncated $\ell$-positive functions is closed under addition and multiplication by positive scalars.
\hfill $\Box$
\end{prop}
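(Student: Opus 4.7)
The plan is to reduce the statement immediately to Proposition \ref{P:Ulpos closure to scalars and addition}, which already states that $(U_{\ell\text{-pos}})^+$ is closed under addition and multiplication by positive scalars. The point is that being truncated $\ell$-positive for $F\colon \L^n \to \RR$ is a condition that is evaluated fibre-by-fibre: for each pair of words $\omega,\tau \in \T$ of total length $n-1$ we ask that the function $f_{\omega,\tau}\colon \lambda \mapsto F(\omega\lambda\tau)$ on $\L$ lie in $(U_{\ell\text{-pos}})^+$. Symmetry is similarly an identity-based condition preserved by linear combinations.

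First I would address the scalar case. Fix $c>0$ and a truncated $\ell$-positive $F$. For any $\omega,\tau$ of total length $n-1$, the function $\lambda \mapsto (cF)(\omega\lambda\tau)$ equals $c\cdot f_{\omega,\tau}$, and Proposition \ref{P:Ulpos closure to scalars and addition} puts it in $(U_{\ell\text{-pos}})^+$. Hence $cF$ is truncated $\ell$-positive. If $F$ is in addition symmetric, then $cF$ is obviously symmetric as well.

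Next I would do the sum. Let $F,G$ be truncated $\ell$-positive. For any $\omega,\tau$ of total length $n-1$, write $f_{\omega,\tau}$ and $g_{\omega,\tau}$ for the associated functions on $\L$, both in $(U_{\ell\text{-pos}})^+$ by hypothesis. The corresponding function for $F+G$ is simply $f_{\omega,\tau}+g_{\omega,\tau}$, which again lies in $(U_{\ell\text{-pos}})^+$ by Proposition \ref{P:Ulpos closure to scalars and addition}. Thus $F+G$ is truncated $\ell$-positive. If $F$ and $G$ are both symmetric, then $(F+G)(\lambda_{\sigma(1)}\cdots\lambda_{\sigma(n)}) = F(\lambda_{\sigma(1)}\cdots\lambda_{\sigma(n)}) + G(\lambda_{\sigma(1)}\cdots\lambda_{\sigma(n)}) = F(\lambda_1\cdots\lambda_n)+G(\lambda_1\cdots\lambda_n) = (F+G)(\lambda_1\cdots\lambda_n)$ for every permutation $\sigma$, so $F+G$ is symmetric.

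There is really no obstacle here; the statement is a book-keeping consequence of Proposition \ref{P:Ulpos closure to scalars and addition} together with the trivial remark that pointwise addition and positive scaling of functions $\L^n \to \RR$ preserve symmetry. The only thing to be careful about is to note that the defining condition of truncated $\ell$-positivity quantifies over all $\omega,\tau$ with $|\omega|+|\tau|=n-1$, and that the same pair $(\omega,\tau)$ is used for $F$, $G$, and $F+G$ (respectively $cF$), so the membership in $(U_{\ell\text{-pos}})^+$ transfers fibre-by-fibre.
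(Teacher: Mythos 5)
Your proposal is correct and is exactly the argument the paper intends: the paper states this proposition without proof precisely because it is the fibre-by-fibre reduction you describe, using Proposition \ref{P:Ulpos closure to scalars and addition} on each slice $\lambda \mapsto F(\omega\lambda\tau)$ together with the trivial preservation of symmetry under pointwise addition and positive scaling. Nothing is missing.
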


\begin{void}\label{V:Notation evaluation on words}
{\bf Notation.}
Let $W(\Omega)$ denote the set of words in the alphabet $\Omega$.
For any $\omega \in \Omega$ we write $\omega^k$ for the word $\omega \cdots \omega$ of length $k$.
For any $f \colon \Omega \to \RR$ and any 
$\omega_1\cdots\omega_k \in W(\Omega)$ write
\[f(w)=f(\omega_1)\cdots f(\omega_k).
\]
\end{void}

Proposition \ref{P:cooking up F from ui} below shows that European functions, see Definition \ref{D:European}, are examples of symmetric truncated $\ell$-positive functions.
In order to construct them one needs to find $u \in \Ulpos$ such that $u=u^+$.
These are easy to construct as follows.
Choose $a_1,\dots,a_m>0$ arbitrarily.
Then for any choice of sufficiently large $a_0$ the vector $u=\sum_{i=0}^m a_i \ell_i$ is $\ell$-positive and has non-negative values, i.e $u=u^+$.
Explicit examples of such vectors are given in Proposition \ref{P:ui from Di Ui} below.

\begin{prop}\label{P:cooking up F from ui}
Any European function $F \colon \L^n \to \RR$ is symmetric truncated $\ell$-positive. 
\end{prop}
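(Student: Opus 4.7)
The plan is to verify the two defining conditions of a symmetric truncated $\ell$-positive function separately, working directly from the formula
\[
F(\lambda_1\cdots \lambda_n) = \Big(\sum_{j=1}^r s_j \cdot  u_j(\lambda_1) \cdots u_j(\lambda_n) - C\Big)^+.
\]
Symmetry should be immediate: for each fixed $j$ the product $u_j(\lambda_1) \cdots u_j(\lambda_n)$ is a product of real scalars, and so depends only on the multiset $\{\lambda_1,\dots,\lambda_n\}$. Hence the outer sum, the subtraction of $C$, and the truncation all preserve invariance under permutations of the arguments.

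For the truncated $\ell$-positive condition I would fix words $\omega, \tau \in \T$ of total length $n-1$ and examine the one-variable function $f \colon \L \to \RR$ given by $f(\lambda) = F(\omega \lambda \tau)$. Substituting into the defining formula and using the product notation of \ref{V:Notation evaluation on words}, this expands to
\[
f(\lambda) = \Big(\sum_{j=1}^r A_j \, u_j(\lambda) - C\Big)^+, \qquad A_j := s_j \cdot u_j(\omega) \cdot u_j(\tau).
\]
Since each $u_j = u_j^+$ is non-negative by hypothesis and $s_j, C \geq 0$, every $A_j$ is non-negative. So the expression inside the truncation is a non-negative linear combination of $\ell$-positive vectors $u_j$, minus a non-negative multiple of the constant vector $\ell_0$.

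I would then split into two trivial cases. If all $A_j$ vanish, then $f = (-C)^+ = 0$, which lies in $(\Ulpos)^+$ as the empty sum (take $k=0$ in Definition \ref{D:ell-positive vectors}). Otherwise let $J = \{j : A_j > 0\} \neq \emptyset$ and set $v := \sum_{j \in J} A_j u_j - C\ell_0$. By Proposition \ref{P:Ulpos closure to scalars and addition}, each $A_j u_j$ remains in $\Ulpos$ (positive scaling), their sum is in $\Ulpos$ (additive closure), and adding the real multiple $-C\ell_0$ still keeps the result in $\Ulpos$. Hence $v \in \Ulpos$ and $f = v^+ \in (\Ulpos)^+$ (take $k=1$).

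There is no serious obstacle here — the entire argument is a bookkeeping reduction from $n$ variables to one. The only point that needs to be stated carefully is why the coefficients $A_j$ come out non-negative, and this is exactly the role played by the hypothesis $u_j = u_j^+$ built into the definition of "European": without it the product $u_j(\omega)u_j(\tau)$ could have either sign, the sum $\sum_j A_j u_j$ would not be $\ell$-positive, and the closure argument above would collapse.
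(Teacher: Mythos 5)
Your argument is correct and follows essentially the same route as the paper: reduce to the one-variable slice $f(\lambda)=\bigl(\sum_j s_j u_j(\omega)u_j(\tau)\,u_j(\lambda)-C\bigr)^+$, note the coefficients are non-negative because $u_j=u_j^+$ and $s_j\geq 0$, and invoke the closure properties of $\Ulpos$ together with the allowance of adding $-C\ell_0$. Your explicit treatment of the degenerate case where all coefficients vanish (giving $f=0$, the empty sum in $(\Ulpos)^+$) is a small point the paper passes over silently, but it changes nothing of substance.
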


\begin{proof}
By definition $F(\lambda_1\cdots\lambda_n)=(\sum_{j=1}^r s_j \cdot u_j(\lambda_1) \cdots u_j(\lambda_n) -C)^+$ where $u_1,\dots,u_r \in \Ulpos$ and $u_i^+=u_i$ and $s_j, C \geq 0$.
The symmetry of $F$ is clear.
Given $\omega, \tau \in \T$ of total length $n-1$, the function $f(\lambda) = F(\omega \lambda \tau)$ is the vector in $\RR^\L$
\[
(\sum_{j=1}^r s_j \cdot u_j(\omega)\cdot u_j(\tau) \cdot u_j -C \cdot \ell_0)^+.
\]
Since $s_j$ and $u_j(\omega), u_j(\tau)$ are non-negative, this is a vector in $(\Ulpos)^+$.
\end{proof}

\begin{prop}\label{P:ui from Di Ui}
Choose some $1 \leq i \leq m$ and $0<D_i<U_i$.
Then $u_i \in \RR^\L$ defined by
\[
u_i(\lambda) = D_i^{\lambda(i)} U^{1-\lambda(i)} =
\left\{
\begin{array}{ll}
U_i & \text{if } \lambda(i)=0 \\
D_i & \text{if } \lambda(i)=1
\end{array}\right.
\]
is an element of $\Ulpos$.
In addition $u_i(\lambda)>0$ for all $\lambda$, i.e $u_i^+=u_i$.
\end{prop}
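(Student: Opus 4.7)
The plan is to make the assertion concrete by writing $u_i$ explicitly as a linear combination of $\ell_0,\dots,\ell_m$, and then read off both $\ell$-positivity and pointwise positivity from the coefficients.

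First I would exploit the fact that $u_i(\lambda)$ depends only on the single coordinate $\lambda(i)$. Since $\ell_0$ is the constant $1$ and $\ell_i(\lambda)=(-1)^{\lambda(i)}$, one expects only the summands with indices $0$ and $i$ to be needed, and the identity $\tfrac{a+b}{2}+\tfrac{a-b}{2}(-1)^{\epsilon}$ evaluates to $a$ for $\epsilon=0$ and to $b$ for $\epsilon=1$. Applying this with $a=U_i$, $b=D_i$ and $\epsilon=\lambda(i)$, I would verify by simply checking the two cases $\lambda(i)=0$ and $\lambda(i)=1$ the identity
\[
u_i \;=\; \frac{U_i+D_i}{2}\,\ell_0 \;+\; \frac{U_i-D_i}{2}\,\ell_i.
\]
This already shows $u_i\in U$.

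Next I would read off $\ell$-positivity. In this decomposition $a_0=\tfrac{U_i+D_i}{2}$, $a_i=\tfrac{U_i-D_i}{2}$, and $a_j=0$ for every $j\in\{1,\dots,m\}\setminus\{i\}$. The hypothesis $D_i<U_i$ yields $a_i>0$, which is the condition singled out in Definition \ref{D:ell-positive vectors} for the coefficient on $\ell_i$. The only subtle point is that $a_j=0$ for the other indices, which sits at the boundary of the strict condition $a_1,\dots,a_m>0$; this is the one step that needs a remark, and it is handled either by interpreting Definition \ref{D:ell-positive vectors} to admit zero coefficients on the $\ell_j$ not participating in $u_i$, or by noting that $u_i$ is the limit as $\varepsilon\to 0^+$ of the strictly $\ell$-positive vectors $u_i+\varepsilon\sum_{j\neq 0,i}\ell_j$, which is how it will be used in downstream sums such as Proposition \ref{P:pay off is ell positive} where all indices do get covered.

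Finally, positivity is immediate: the only two values taken by $u_i$ on $\L$ are $U_i$ and $D_i$, both of which are strictly positive by hypothesis. Hence $u_i(\lambda)>0$ for every $\lambda\in\L$, so $u_i^+(\lambda)=\max\{u_i(\lambda),0\}=u_i(\lambda)$ and therefore $u_i^+=u_i$. The main (and only) obstacle is the boundary issue with the strict inequalities in the definition of $\Ulpos$, and apart from that remark the proof is a one-line evaluation.
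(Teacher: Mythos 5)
Your proof is correct and is essentially the paper's own argument: the paper's proof consists of exactly the same decomposition $u_i=\tfrac{U_i-D_i}{2}\,\ell_i+\tfrac{U_i+D_i}{2}\,\ell_0$, from which it reads off $\ell$-positivity, and it concludes $u_i^+=u_i$ from $D_i,U_i>0$. The boundary issue you flag (the coefficients of $\ell_j$ for $j\neq 0,i$ vanish, while Definition~\ref{D:ell-positive vectors} asks for strict positivity of $a_1,\dots,a_m$) is genuine but is passed over in silence by the paper's proof as well; just note that of your two suggested fixes only the first is sound, since $\Ulpos$ is cut out by strict inequalities and is not closed, so exhibiting $u_i$ as a limit of $\ell$-positive vectors does not place it in $\Ulpos$.
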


\begin{proof}
By definition $\ell_i(\lambda)=(-1)^{\lambda(i)}$ and $\ell_0(\lambda)=1$.
One then checks that 
\[
u_i = \tfrac{U_i-D_i}{2} \ell_i + \tfrac{U_i+D_i}{2} \ell_0.
\]
The second assertion is clear since $D_i, U_i>0$.
\end{proof}

\section{Functions on trees}

Let $\T$ be an $\L$-labelled tree of height $n$, see Section \ref{SS:extension F tree}.
A function  $\Phi \colon \T^* \to \Delta(\L)$ is merely an assignment of a probability density function on the set of successors of each vertex $\tau \in \T^*$.
For any $\omega \in \T_{n-k}$ there is a canonical bijection $\L^k \cong A_\omega$ given by $\tau \mapsto \omega\tau$, see \eqref{E:def A_omega}.
We define a function $P(\Phi,A_\omega) \colon \L^k \to \RR$ by
\begin{equation}\label{D:P(Phi,A_omega)}
P(\Phi,A_\omega) (\tau) = \prod_{j=1}^k \Phi(\omega \tau_1 \cdots \tau_{j-1})(\tau_j).
\end{equation}
If $\omega$ is the empty word then $A_\omega=\L^n$ and we write $P(\Phi)$ instead of $P(\Phi,A_\emptyset)$.
See \eqref{D:P-Phi}.

\begin{prop}\label{P:P(Phi) and its properties}
Consider $\Phi \colon \T^* \to \Delta(\L)$ and $\omega \in \T_{n-k}$ where $1 \leq k \leq n$. 

\begin{enumerate}
\item 
$P(\Phi,A_\omega)$ is a probability density function on $\L^k$.

\item
\label{Item:P:P(Phi) constant}
If $q \in \Delta(\L)$ and $\Phi=q$ is the constant function then $P(\Phi,A_\omega)$ is the product density function $(\L,q)^k$ on $\L^k$. 

\item
$P(\Phi)(A_\omega)=\prod_{j=1}^{n-k} \Phi(\omega_1\cdots \omega_{j-1})(\omega_j)$ for any $\omega \in \T_{n-k}$.

\item
If $P(\Phi)(A_\omega)>0$ then $P(\Phi)(A_{\omega \lambda} | A_\omega) = \Phi(\omega)(\lambda)$ for all $\lambda \in \L$.
More generally, $P(\Phi)(B|A_\omega)=P(\Phi,A_\omega)(B)$ for any $B \subseteq A_\omega \cong \L^k$.
\label{Item:P(Phi) conditional probability}

\item
Any probability measure $P'$ on $\L^n$ has the form $P(\Phi)$ for some $\Phi \colon \T^* \to \Delta(\L)$.
\label{Item:P(Phi) gets them all}
\end{enumerate}
\end{prop}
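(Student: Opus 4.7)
The plan is to handle the five parts in order, doing most of the bookkeeping through the product expansion defining $P(\Phi,A_\omega)$ in \eqref{D:P(Phi,A_omega)}. For (1), I would prove by induction on $k$ that $\sum_{\tau\in\L^k}P(\Phi,A_\omega)(\tau)=1$: factoring out the outer index $j=k$ yields
\[
\sum_{\tau\in\L^k}\prod_{j=1}^k \Phi(\omega\tau_1\cdots\tau_{j-1})(\tau_j) \;=\; \sum_{\tau_1\cdots\tau_{k-1}}\!\Bigl(\prod_{j<k}\Phi(\omega\tau_1\cdots\tau_{j-1})(\tau_j)\Bigr)\sum_{\tau_k\in\L}\Phi(\omega\tau_1\cdots\tau_{k-1})(\tau_k),
\]
and the inner sum equals $1$ since $\Phi$ takes values in $\Delta(\L)$. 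Nonnegativity is immediate from the same inclusion. Part (2) is then a direct substitution: when $\Phi\equiv q$ the product collapses to $q(\tau_1)\cdots q(\tau_k)$, which by definition is the product measure $(\L,q)^k$ on $\L^k$.

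For (3), I would unpack the definition of $P(\Phi)$ and split its defining product at the index $n-k$:
\[
P(\Phi)(A_\omega) \;=\; \sum_{\tau\in\L^k}\!\!\prod_{j=1}^{n-k}\!\Phi(\omega_1\cdots\omega_{j-1})(\omega_j) \cdot \!\prod_{j=1}^k\!\Phi(\omega\tau_1\cdots\tau_{j-1})(\tau_j),
\]
pull the first factor (which is independent of $\tau$) outside the sum, and recognise the remaining inner sum as $\sum_\tau P(\Phi,A_\omega)(\tau)=1$ by (1). This also shows, as a byproduct, that the $\omega\tau$ summand of $P(\Phi)$ factors as $P(\Phi)(A_\omega)\cdot P(\Phi,A_\omega)(\tau)$, which is the key identity I will need next.

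Part (4) then follows by definition of conditional probability: for any $B\subseteq A_\omega$ with $A_\omega$ identified with $\L^k$ via $\tau\mapsto \omega\tau$,
\[
P(\Phi)(B\mid A_\omega) \;=\; \frac{P(\Phi)(B)}{P(\Phi)(A_\omega)} \;=\; \frac{\sum_{\omega\tau\in B}P(\Phi)(A_\omega)P(\Phi,A_\omega)(\tau)}{P(\Phi)(A_\omega)} \;=\; P(\Phi,A_\omega)(B),
\]
and taking $B=A_{\omega\lambda}$ (which under the bijection corresponds to the slice $\{\tau\in\L^k:\tau_1=\lambda\}$) specialises to $\Phi(\omega)(\lambda)$ via (1) again. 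For (5), given an arbitrary probability measure $P'$ on $\L^n$, I would \emph{define} $\Phi\colon\T^*\to\Delta(\L)$ by
\[
\Phi(\omega)(\lambda) \;=\; \begin{cases} P'(A_{\omega\lambda})/P'(A_\omega) & \text{if } P'(A_\omega)>0,\\ \text{any fixed element of }\Delta(\L) & \text{otherwise,}\end{cases}
\]
and verify that $P(\Phi)=P'$ by comparing values on singletons $\{\lambda_1\cdots\lambda_n\}$, splitting at the first index $j$ (if any) where $P'(A_{\lambda_1\cdots\lambda_j})=0$. The main obstacle, and the only subtlety, is this last telescoping cancellation together with handling the null branches consistently; everything else is mechanical manipulation of the defining product.
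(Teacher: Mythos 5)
Your proposal is correct, and it follows the route the paper intends: the paper declares these verifications elementary and leaves them to the reader, giving only the hint for item (5) of defining $\Phi(\omega)$ via the conditional probabilities of item (4) when $P'(A_\omega)>0$ and arbitrarily otherwise, which is exactly your construction. Your telescoping argument, including the treatment of the first null prefix, correctly supplies the omitted details.
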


\begin{proof}
Elementary and left to the reader.
For the last statement define $\Phi(\omega)$ by means of item (\ref{Item:P(Phi) conditional probability}) whenever $P'(A_\omega)>0$ and arbitrarily otherwise.
\end{proof}

The following construction will be fundamental.
It gives a procedure to extend a function $F \colon \L^n \to \RR$ defined on the leaves of $\T$ to the entire tree.

\begin{defn}\label{D:extension of F}
Let $F \colon \L^n \to \RR$  and $\Phi \colon \T^* \to \RR^\L$ be functions.
Define by induction on $0 \leq k \leq n$ functions $F^{(k)}_{\Phi} \colon \T_{n-k} \to \RR$ by
\[
F^{(0)}_\Phi = F
\]
and once $F^{(k)}_\Phi$ has been defined, for any $\omega \in \T_{n-k-1}=\L^{n-k-1}$ set
\[
F^{(k+1)}_\Phi(\omega) = \left\langle F^{(k)}_\Phi|_{\OP{succ}(\omega)} , \Phi(\omega) \right\rangle =
\sum_{\lambda \in \L} F^{(k)}_\Phi(\omega\lambda) \cdot \Phi(\omega)(\lambda).
\]
\end{defn}

\begin{lemma}\label{L:F^k explicit formula}
For any $0 \leq k \leq n$ and any $\omega \in \T_{n-k}$ 
\[
F^{(k)}_\Phi (\omega) = \sum_{\theta \in \L^k} F(\omega\theta) \cdot \prod_{j=1}^k \Phi(\omega\theta_1\cdots\theta_{j-1})(\theta_j).
\]
\end{lemma}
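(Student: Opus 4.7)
The plan is to prove the formula by straightforward induction on $k$, matching the recursive definition of $F^{(k)}_\Phi$ against the explicit product.

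For the base case $k=0$, the set $\L^0$ contains only the empty word $\varepsilon$, the product $\prod_{j=1}^0$ is empty and equals $1$, and $\omega\varepsilon = \omega$, so the right hand side reduces to $F(\omega)$, which is $F^{(0)}_\Phi(\omega)$ by definition.

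For the inductive step, assume the formula holds for some $k$ with $0 \leq k < n$ and fix $\omega \in \T_{n-k-1}$. By Definition \ref{D:extension of F},
\[
F^{(k+1)}_\Phi(\omega) = \sum_{\lambda \in \L} F^{(k)}_\Phi(\omega\lambda) \cdot \Phi(\omega)(\lambda).
\]
Since $\omega\lambda \in \T_{n-k}$, the induction hypothesis applies and gives
\[
F^{(k)}_\Phi(\omega\lambda) = \sum_{\theta' \in \L^k} F(\omega\lambda\theta') \cdot \prod_{j=1}^k \Phi(\omega\lambda\theta'_1\cdots\theta'_{j-1})(\theta'_j).
\]
Substituting this in and re-indexing via the bijection $\L \times \L^k \cong \L^{k+1}$, $(\lambda,\theta') \mapsto \theta := \lambda\theta'$ (so that $\theta_1=\lambda$ and $\theta_{j+1}=\theta'_j$ for $1 \leq j \leq k$), the factor $\Phi(\omega)(\lambda)$ becomes the $j=1$ term of the extended product (note $\omega\theta_1\cdots\theta_{0} = \omega$), while each factor $\Phi(\omega\lambda\theta'_1\cdots\theta'_{j-1})(\theta'_j)$ becomes $\Phi(\omega\theta_1\cdots\theta_j)(\theta_{j+1})$, i.e.\ the $(j+1)$-st term in the new product. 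Hence
\[
F^{(k+1)}_\Phi(\omega) = \sum_{\theta \in \L^{k+1}} F(\omega\theta) \cdot \prod_{j=1}^{k+1} \Phi(\omega\theta_1\cdots\theta_{j-1})(\theta_j),
\]
which is the desired formula at level $k+1$.

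There is no real obstacle: the only care needed is the re-indexing of the concatenated word $\theta = \lambda\theta'$ and verifying that the prefixes $\omega\theta_1\cdots\theta_{j-1}$ line up correctly with $\omega\lambda\theta'_1\cdots\theta'_{j-2}$ under the shift $j \mapsto j-1$. Once that bookkeeping is carried out, the formula follows immediately from the inductive definition.
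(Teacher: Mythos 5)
Your proof is correct and is precisely the straightforward induction on $k$ that the paper invokes (it leaves the details to the reader): the base case and the substitution of the induction hypothesis into Definition \ref{D:extension of F}, followed by the re-indexing $\theta = \lambda\theta'$, are exactly the intended argument. No issues.
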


\begin{proof}
Straightforward induction on $k$.
The details are left to the reader.
\end{proof}

\begin{prop}\label{P:F^k as conditional probability}
Let $F \colon \L^n \to \RR$ and $\Phi \colon \T^* \to \Delta(\L)$ be functions.
For any $0 \leq k \leq n$ and any $\omega \in \T_{n-k}$ recall the function $F_{\omega-}$ from \eqref{E:F_omega-}.
Then
\[
F^{(k)}_\Phi(\omega) = E_{P(\Phi,A_\omega)}(F_{\omega-}).
\]
If $P(\Phi)(A_\omega)>0$ then $F^{(k)}_\Phi(\omega) = E_{P(\Phi)}(F|A_\omega)$.
\end{prop}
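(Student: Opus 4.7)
The plan is to deduce both identities directly from Lemma~\ref{L:F^k explicit formula} together with item~(\ref{Item:P(Phi) conditional probability}) of Proposition~\ref{P:P(Phi) and its properties}. The recursive definition of $F^{(k)}_\Phi$ has already been unwound into a closed-form sum in Lemma~\ref{L:F^k explicit formula}, so no further induction is needed; the proof is essentially a matter of recognising the two sides as identical sums under the bijection $A_\omega \cong \L^k$ given by $\tau \mapsto \omega\tau$.

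First I would handle the unconditional identity. Starting from the definition of expectation on $\L^k$,
\[
E_{P(\Phi,A_\omega)}(F_{\omega-}) \;=\; \sum_{\tau \in \L^k} F_{\omega-}(\tau) \cdot P(\Phi,A_\omega)(\tau),
\]
I would substitute $F_{\omega-}(\tau)=F(\omega\tau)$ from \eqref{E:F_omega-} and the explicit formula \eqref{D:P(Phi,A_omega)} for $P(\Phi,A_\omega)(\tau)$. The resulting expression is exactly the right-hand side of Lemma~\ref{L:F^k explicit formula}, giving $F^{(k)}_\Phi(\omega)$.

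For the conditional identity, I would assume $P(\Phi)(A_\omega)>0$ and invoke item~(\ref{Item:P(Phi) conditional probability}) of Proposition~\ref{P:P(Phi) and its properties}, which tells us that for any $B \subseteq A_\omega$ we have $P(\Phi)(B \mid A_\omega) = P(\Phi,A_\omega)(B)$, via the bijection $A_\omega \cong \L^k$. Applying this to the singletons $B=\{\omega\tau\}$ yields
\[
E_{P(\Phi)}(F \mid A_\omega) \;=\; \sum_{\tau \in \L^k} F(\omega\tau)\, P(\Phi)(\{\omega\tau\}\mid A_\omega) \;=\; \sum_{\tau \in \L^k} F_{\omega-}(\tau)\, P(\Phi,A_\omega)(\tau),
\]
which is the expectation just computed, hence equals $F^{(k)}_\Phi(\omega)$.

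There is no real obstacle here: the content of the proposition is already encoded in Lemma~\ref{L:F^k explicit formula} and in the conditional-probability statement of Proposition~\ref{P:P(Phi) and its properties}. The only care needed is the bookkeeping of the identification of $A_\omega$ with $\L^k$ via concatenation, so that $F|_{A_\omega}$ becomes $F_{\omega-}$ and $P(\Phi)(\,\cdot\,\mid A_\omega)$ becomes $P(\Phi,A_\omega)$; once that is made explicit, both statements reduce to rewriting the same sum.
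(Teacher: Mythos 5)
Your proposal is correct and follows exactly the route the paper takes: combine the closed-form sum in Lemma \ref{L:F^k explicit formula} with the formula \eqref{D:P(Phi,A_omega)} to get the unconditional identity, and then invoke Proposition \ref{P:P(Phi) and its properties}(\ref{Item:P(Phi) conditional probability}) for the conditional one. You have merely spelled out the bookkeeping that the paper leaves to the reader, so there is nothing to add.
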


\begin{proof}
Apply Lemma \ref{L:F^k explicit formula} and \eqref{D:P(Phi,A_omega)}.
If $P(\Phi)(A_\omega)>0$ use Proposition \ref{P:P(Phi) and its properties}(\ref{Item:P(Phi) conditional probability}).
\end{proof}

\begin{prop}\label{P:F^k_Phi bounded by F^k_q} 
Let $q^*$ and $q_*$ be the supervertex and the subvertex of a non-empty $P(b)$.
Let $\Gamma=\Gamma(\L^n,b)$.
By abuse of notation let $q^*$ and $q_*$ denote the constant functions $\T^* \to \RR^\L$.
Let $0 \leq k \leq n$ and let $\omega \in \T_{n-k}$.
Then for any $\Phi \colon \T^* \to P(b)$ 
\[
F^{(k)}_\Phi(\omega) \leq F^{(k)}_{q^*}(\omega)
\]
If $q_* \in P(b)$, see Proposition \ref{P:q_* in P(b)}, then 
\[
F^{(k)}_\Phi(\omega) \geq F^{(k)}_{q_*}(\omega)
\]
\end{prop}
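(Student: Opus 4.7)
My plan is to prove both inequalities simultaneously by induction on $k$, with Corollary \ref{C:sum of ell-pos+ at supervertex} (the synthesis of Theorems \ref{T:max at supervertex} and \ref{T:min at subvertex}) providing the inductive step. The base case $k=0$ is immediate since $F^{(0)}_\Phi = F = F^{(0)}_{q^*} = F^{(0)}_{q_*}$ by Definition \ref{D:extension of F}, regardless of $\Phi$. As always in this section I assume $F \colon \L^n \to \RR$ is truncated $\ell$-positive; this hypothesis is what makes the inductive machinery turn.

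For the inductive step, fix $k \geq 1$ and $\omega \in \T_{n-k}$, and introduce $f, g \colon \L \to \RR$ defined by $f(\lambda) = F^{(k-1)}_\Phi(\omega\lambda)$ and $g(\lambda) = F^{(k-1)}_{q^*}(\omega\lambda)$. By Definition \ref{D:extension of F} we have $F^{(k)}_\Phi(\omega) = \langle f, \Phi(\omega)\rangle$ and $F^{(k)}_{q^*}(\omega) = \langle g, q^*\rangle$. The induction hypothesis gives $f \leq g$ pointwise, and since $\Phi(\omega) \in P(b) \subseteq \Delta(\L)$ has non-negative entries, $\langle f, \Phi(\omega)\rangle \leq \langle g, \Phi(\omega)\rangle$. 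It will therefore suffice to show $\langle g, \Phi(\omega)\rangle \leq \langle g, q^*\rangle$, which by Corollary \ref{C:sum of ell-pos+ at supervertex} follows as soon as $g \in (\Ulpos)^+$.

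This last point is the crux of the argument. By Lemma \ref{L:F^k explicit formula},
\begin{equation*}
g(\lambda) \; = \; \sum_{\theta \in \L^{k-1}} q^*(\theta_1)\cdots q^*(\theta_{k-1}) \cdot F(\omega\lambda\theta),
\end{equation*}
a non-negative linear combination of the functions $\lambda \mapsto F(\omega\lambda\theta)$ indexed by $\theta \in \L^{k-1}$. For each such $\theta$, the word $\omega$ has length $n-k$ and $\theta$ has length $k-1$, so their total length is $n-1$; hence by Definition \ref{D:symmetric L positive} the function $\lambda \mapsto F(\omega\lambda\theta)$ belongs to $(\Ulpos)^+$. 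Closure of $(\Ulpos)^+$ under addition and multiplication by non-negative scalars (Proposition \ref{P:Ulpos closure to scalars and addition}, together with the definition of $(\Ulpos)^+$) then forces $g \in (\Ulpos)^+$, as required.

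The minimum inequality follows by the same template. Set $g_*(\lambda) = F^{(k-1)}_{q_*}(\omega\lambda)$; the induction hypothesis yields $f \geq g_*$ pointwise, and since $q_* \in P(b) \subseteq \Delta(\L)$ has non-negative entries, the identical expansion shows $g_* \in (\Ulpos)^+$. Corollary \ref{C:sum of ell-pos+ at supervertex} now gives $\langle g_*, \Phi(\omega)\rangle \geq \langle g_*, q_*\rangle = F^{(k)}_{q_*}(\omega)$, completing the induction. The only non-routine step is the closure verification for $g$ and $g_*$: once one notices that partial averaging against $q^*$ (or $q_*$) preserves membership in $(\Ulpos)^+$, the entire induction collapses to a single application of the earlier theorems at each level of the tree.
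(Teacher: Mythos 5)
Your proposal is correct and follows essentially the same route as the paper's proof: induction on $k$, with the key point that $\lambda \mapsto F^{(k-1)}_{q^*}(\omega\lambda)$ (resp. $F^{(k-1)}_{q_*}(\omega\lambda)$) lies in $(\Ulpos)^+$ because the constant choice $q^*$ (resp. $q_*$, when it is in $P(b)$) has non-negative entries, so Lemma \ref{L:F^k explicit formula} exhibits it as a non-negative combination of the slices $\lambda \mapsto F(\omega\lambda\theta)$, after which monotonicity of the pairing and Corollary \ref{C:sum of ell-pos+ at supervertex} give the two-step chain of inequalities exactly as in the paper. Your write-up merely makes explicit the closure verification that the paper delegates to Propositions \ref{P:truncated ell positive closed under sums and products} and \ref{P:Ulpos closure to scalars and addition}; there is no substantive difference.
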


\begin{proof}
Use induction on $k$.
The case $k=0$ is a triviality because $F^{(0}_\Phi, F^{(0)}_{q^*}, F^{(0)}_{q_*} = F$ by construction.
Assume the inequalities hold for $k$.

Consider some $\omega \in \T_{n-k-1}$.
Since $q^*(\lambda) \geq 0$ for all $\lambda$, and by assumption also $q_*(\lambda) \geq 0$, it follows from Lemma \ref{L:F^k explicit formula} and Propositions \ref{P:truncated ell positive closed under sums and products} and \ref{P:Ulpos closure to scalars and addition} that the functions $f^*(\lambda)=F^{(k)}_{q^*}(\omega\lambda)$ and $f_*(\lambda)=F^{(k)}_{q_*}(\omega\lambda)$ are truncated $\ell$-positive (we remark that here it is {\em crucial} that $q^*$ and $q_*$ are constant functions $\T^* \to P(b)$).
By definition, the induction hypothesis, and the monotonicity of the expectation and Corollary \ref{C:sum of ell-pos+ at supervertex} 
\[
F^{(k+1)}_\Phi(\omega) = 
\langle F^{(k)}_\Phi|_{\OP{succ}(\omega)}, \Phi(\omega) \rangle \leq
\langle F^{(k)}_{q^*}|_{\OP{succ}(\omega)}, \Phi(\omega) \rangle \leq
\langle F^{(k)}_{q^*}|_{\OP{succ}(\omega)}, q^* \rangle = 
F^{(k+1)}_{q^*}(\omega).
\]
An identical argument (with the inequalities revered)
shows that $F^{(k+1)}_\Phi(\omega) \geq F^{(k+1)}_{q_*}(\omega)$ provided $q_* \in P(b)$.
\end{proof}

\begin{defn}\label{D:Minimizer of European}
Let $F \colon \L^n \to \RR$ be a European function (Definition \ref{D:European}) given by $u_1,\dots,u_r \in \Ulpos$ and $s_1,\dots,s_r,C \geq 0$.
Let $P(b)$ be non-empty.
For any $0 \leq i \leq m$ set
\[
\beta(0)=1 \qquad \text{and} \qquad \beta(i)=b''(i)=\tfrac{1+b(i)}{2}.
\]
For any $1 \leq j \leq r$ and any $0 \leq i \leq m$ set
\[
\alpha_j(0)=u_j(\nu_0) \qquad \text{and} \qquad \alpha_j(i)=u_j(\nu_i)-u_j(\nu_0).
\]
The {\em minimizer} of $F$ on $P(b)$ is the function $G \colon \T^* \to \RR$ defined as follows.
Using the notation in \ref{V:Notation evaluation on words} for $u_j$ and $\beta$ and $\alpha_j$, for any $1 \leq k \leq n$ and any $\omega \in \T_{n-k}$ set
\[
G^{(k)}(\omega)=\sum_{\mathbf{i} \in \{0,\dots,m\}^k} \, \beta(\mathbf{i})\big(\sum_{j=1}^r s_j \cdot \alpha_j(\mathbf{i}) \cdot u_j(\omega)-C\big)^+.
\]
\end{defn}

The final result of this section gives a lower bound, albeit generally quite poor, for the values of $F^{(k)}_\Phi$ for European functions where $\Phi \colon \T^* \to P(b)$.
Compare with Proposition \ref{P:F^k_Phi bounded by F^k_q}.

\begin{prop}\label{P:minimizer minimizes Eurpeans}
Let $F \colon \L^n \to \RR$ be European and $\Phi \colon \T^* \to P(b)$.
Let $G$ be the minimizer of $F$ on $P(b)$.
Then for any $0 \leq k \leq n$ and any $\omega \in \T_{n-k}$
\[
F^{(k)}_\Phi(\omega) \geq G^{(k)}(\omega).
\]
In particular
\[
E_{P(\Phi)}(F) = 
F^{(n)}_\Phi(\emptyset) \geq \sum_{\mathbf{i} \in \{0,\dots,m\}^n} \beta(\mathbf{i}) \big(\sum_{j=1}^r s_j \cdot \alpha_j(\mathbf{i})-C\big)^+.
\]
\end{prop}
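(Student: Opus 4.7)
The plan is to give a direct proof by first establishing a pointwise lower bound for $F_{\omega-}(\tau)$ and then taking conditional expectation, rather than inducting on $k$. The key observation is a multilinear decomposition of $u_j(\tau)$. Writing $u_j = \sum_{i=0}^m a_{j,i} \ell_i$ with $a_{j,i} > 0$ for $i \geq 1$, a short calculation shows $u_j(\nu_i) - u_j(\nu_0) = 2 a_{j,i}$ for $i \geq 1$, and $u_j(\nu_0) \geq 0$ since $u_j = u_j^+$; so by Definition \ref{D:Minimizer of European} and Lemma \ref{L:explicit ell''}, for every $\lambda \in \L$,
\[
u_j(\lambda) \;=\; \alpha_j(0) + \sum_{i=1}^m \alpha_j(i)\,\ell''_i(\lambda).
\]
Setting $Z^{(\ell)}_0(\tau) := 1$ and $Z^{(\ell)}_i(\tau) := \ell''_i(\tau_\ell) \in \{0,1\}$ for $i \geq 1$, and expanding $u_j(\tau) = \prod_{\ell=1}^k u_j(\tau_\ell)$, one obtains
\[
u_j(\tau) \;=\; \sum_{\mathbf{i} \in \{0,\dots,m\}^k} \alpha_j(\mathbf{i})\,Z_{\mathbf{i}}(\tau), \qquad Z_\mathbf{i}(\tau) := \prod_{\ell=1}^k Z^{(\ell)}_{i_\ell}(\tau) \in \{0,1\}.
\]

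Set $A_{\mathbf{i}} := \sum_{j=1}^r s_j\,\alpha_j(\mathbf{i})\,u_j(\omega) \geq 0$ (as $s_j, u_j(\omega), \alpha_j(\mathbf{i}) \geq 0$). Then $F_{\omega-}(\tau) = \bigl(\sum_\mathbf{i} A_\mathbf{i} Z_\mathbf{i}(\tau) - C\bigr)^+$. The elementary inequality $(\sum_i x_i - C)^+ \geq \sum_i (x_i - C)^+$, valid whenever $x_i, C \geq 0$ by a brief case analysis on whether $\sum_i x_i \geq C$, applied to $x_\mathbf{i} = A_\mathbf{i} Z_\mathbf{i}(\tau)$, together with the trivial identity $(A_\mathbf{i} Z_\mathbf{i}(\tau) - C)^+ = Z_\mathbf{i}(\tau)(A_\mathbf{i} - C)^+$ obtained by inspecting the two possible values of $Z_\mathbf{i}(\tau)$, yields the pointwise lower bound
\[
F_{\omega-}(\tau) \;\geq\; \sum_{\mathbf{i}} Z_\mathbf{i}(\tau)\,(A_\mathbf{i} - C)^+.
\]

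Taking expectation with respect to $P(\Phi,A_\omega)$ and invoking Proposition \ref{P:F^k as conditional probability} reduces the goal to the identity $E_{P(\Phi,A_\omega)}[Z_\mathbf{i}] = \beta(\mathbf{i})$. I would prove this by expanding $P(\Phi,A_\omega)$ via formula \eqref{D:P(Phi,A_omega)} and evaluating the iterated sum over $\tau_\ell$ one level at a time. For any fixed prefix, the inner sum $\sum_{\tau_\ell} \Phi(\omega\tau_1\cdots\tau_{\ell-1})(\tau_\ell)\,Z^{(\ell)}_{i_\ell}(\tau_\ell)$ equals $1$ when $i_\ell = 0$ and, when $i_\ell \geq 1$, equals $\langle \ell''_{i_\ell},\Phi(\omega\tau_1\cdots\tau_{\ell-1})\rangle = b''(i_\ell) = \beta(i_\ell)$ by Lemma \ref{L:P(b) in terms of ell''} — crucially independent of the prefix. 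The constant factors therefore pull out to give $\prod_\ell \beta(i_\ell) = \beta(\mathbf{i})$. Substituting back recognises $\sum_\mathbf{i} \beta(\mathbf{i})(A_\mathbf{i}-C)^+$ as $G^{(k)}(\omega)$. The concluding special case is $k = n$, $\omega = \emptyset$, where $u_j(\omega) = 1$ (empty product).

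The main obstacle I anticipate is resisting the temptation to induct on $k$ by applying Theorem \ref{T:min at subvertex} or Corollary \ref{C:sum of ell-pos+ at supervertex} level-by-level to $\sum_\lambda \Phi(\omega)(\lambda)G^{(k)}(\omega\lambda)$. That route fails because the subvertex weights $b''(0),\dots,b''(m)$ appearing in $\langle v^+,q_*\rangle$ do not match the weights $\beta(0),\dots,\beta(m)$ in the definition of $G^{(k)}$: indeed $\beta(0) = 1$ whereas $b''(0) = 1 - \sum_{i\geq 1}b''(i)$ can be negative. The correct manoeuvre is to bypass the subvertex entirely and exploit only the fact, forced by Lemma \ref{L:P(b) in terms of ell''}, that $\sum_{\lambda:\lambda(i)=0}x(\lambda) = b''(i)$ for every $x \in P(b)$ and every $i \geq 1$, which is precisely what the tower-rule calculation above captures.
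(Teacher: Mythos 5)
Your proof is correct, and it takes a genuinely different route from the paper's. The paper proceeds by induction on $k$ down the tree: it observes that $\lambda \mapsto G^{(k)}(\omega\lambda)$ is truncated $\ell$-positive, bounds $\langle F^{(k)}_\Phi|_{\OP{succ}(\omega)},\Phi(\omega)\rangle \geq \langle G^{(k)}|_{\OP{succ}(\omega)},\Phi(\omega)\rangle \geq \langle G^{(k)}|_{\OP{succ}(\omega)},q_*\rangle$ using Corollary \ref{C:sum of ell-pos+ at supervertex}, and then closes the induction with Lemma \ref{L:<G^(k),q_*> >= g^(k+1)}. Your argument is instead a one-shot computation at the leaves: the expansion $u_j(\lambda)=\alpha_j(0)+\sum_{i\geq 1}\alpha_j(i)\,\ell''_i(\lambda)$ is exactly Lemma \ref{L:ell-positive in ell''} combined with Lemma \ref{L:explicit ell''}; the multilinear expansion of $u_j(\tau)$ over the $k$ coordinates, the inequality $(\sum_{\mathbf i} x_{\mathbf i}-C)^+\geq\sum_{\mathbf i}(x_{\mathbf i}-C)^+$ for $x_{\mathbf i},C\geq 0$, the identity $(A_{\mathbf i}Z_{\mathbf i}(\tau)-C)^+=Z_{\mathbf i}(\tau)(A_{\mathbf i}-C)^+$, and the level-by-level evaluation $E_{P(\Phi,A_\omega)}[Z_{\mathbf i}]=\beta(\mathbf i)$ (valid because the inner sums equal $1$ or $b''(i_\ell)$ independently of the prefix, by Lemma \ref{L:P(b) in terms of ell''}) are all sound, and the resulting sum is $G^{(k)}(\omega)$ by Definition \ref{D:Minimizer of European}. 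What your route buys: it avoids the subvertex machinery and the induction entirely, and it isolates the only feature of $P(b)$ actually used, namely $x\geq 0$ together with $\langle\ell''_i,x\rangle=b''(i)$ for $i\geq 1$. What the paper's route buys: it runs in parallel with Proposition \ref{P:F^k_Phi bounded by F^k_q} and reuses results (Theorem \ref{T:min at subvertex} via Corollary \ref{C:sum of ell-pos+ at supervertex}) already in place.

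One correction to your closing remark: the level-by-level induction through $q_*$ that you dismiss is precisely the paper's proof, and it does not fail. Corollary \ref{C:sum of ell-pos+ at supervertex} gives the lower bound $\langle f,x\rangle\geq\langle f,q_*\rangle$ for all $x\in P(b)$ without requiring $q_*\in P(b)$, and the apparent mismatch between the weights $\beta(0)=1$ and the possibly negative $b''(0)$ is absorbed by Lemma \ref{L:<G^(k),q_*> >= g^(k+1)}, whose proof rewrites $\langle G^{(k)}|_{\OP{succ}(\omega)},q_*\rangle$ as $G^{(k)}(\omega\nu_0)+\sum_{i\geq 1}b''(i)\big(G^{(k)}(\omega\nu_i)-G^{(k)}(\omega\nu_0)\big)$ and invokes Lemma \ref{L:difference of truncated ell positive} --- the same truncation superadditivity you exploit pointwise, applied one level at a time rather than to all $k$ levels at once. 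This misjudgement of the alternative route does not affect the validity of your own argument, which stands on its own.
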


In preparation for the  proof we make some observations.
Suppose that $F \colon \L^n \to \RR$ is a European function defined by $u_1,\dots,u_r \geq 0$.
By Proposition \ref{P:non empty P(b)} all the numbers $\beta(i)$ in Definition \ref{D:Minimizer of European} are non-negative.
Also, $\nu_0$ in Definition \ref{D:nu_i}  is the maximum element $\mathbf{1}$ of $\L$.
Therefore $u_j(\nu_i) \geq u_j(\nu_0)$ for all $1 \leq i \leq m$ by Lemma \ref{L:supp u closed down}.
In particular $\alpha_j(i) \geq 0$, and $\alpha_j(0)=u_j(\nu_0) \geq 0$ by the assumption that $u_j=u_j^+$.

\begin{lemma}\label{L:difference of truncated ell positive}
Let $a,b,c \in \RR^r$ be vectors such that $a_i,b_i,c_i \geq 0$ and $a_i \geq b_i$ for all $i=1,\dots,r$.
Let $C \geq 0$.
Then for any $u \in \RR^r$ such that $u_i \geq 0$ for all $i$
\[
\big(\sum_{i=1}^r c_ia_iu_i-C\big)^+ - \big(\sum_{i=1}^r c_ib_iu_i-C\big)^+ \ \  \geq \ \   \big(\sum_{i=1}^r c_i(a_i-b_i)u_i-C\big)^+.
\]
\end{lemma}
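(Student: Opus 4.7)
The plan is to reduce the lemma to a clean one-variable statement and then dispatch it by a short case analysis.

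First I would set
\[
A = \sum_{i=1}^r c_i a_i u_i, \qquad B = \sum_{i=1}^r c_i b_i u_i, \qquad D = \sum_{i=1}^r c_i(a_i-b_i) u_i.
\]
The hypotheses $a_i \geq b_i \geq 0$, $c_i \geq 0$, $u_i \geq 0$ give immediately that $A \geq B \geq 0$ and that $D = A - B \geq 0$. So the inequality to prove collapses to the purely real statement
\[
(A - C)^+ - (B - C)^+ \ \geq \ (A - B - C)^+
\]
for arbitrary reals $A \geq B \geq 0$ and $C \geq 0$.

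Next I would prove this one-variable inequality by splitting into three cases according to where $A$ and $B$ sit relative to $C$. If $A \leq C$, then $B \leq A \leq C$ and $A - B - C \leq A - C \leq 0$, so all three truncations vanish and both sides are $0$. If $B \geq C$, then in particular $A \geq C$, and the left side equals $(A - C) - (B - C) = A - B$; the right side is $(A-B-C)^+$, which is at most $A - B$ because $C \geq 0$. In the remaining case $B < C < A$, the left side equals $A - C$ while the right side is $(A - B - C)^+$, which is at most $A - C$ because $B \geq 0$.

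The main obstacle is really just bookkeeping in the case analysis; there is nothing deep here because the function $x \mapsto (x-C)^+$ is $1$-Lipschitz and non-decreasing, and the claim is essentially the statement that its ``discrete derivative'' dominates its value on the increment, provided $C \geq 0$. One could give a slicker proof via $1$-Lipschitzness, writing
\[
(A-C)^+ - (A-B-C)^+ \ \geq \ (B-C)^+
\]
and distinguishing only whether $B \leq C$ or $B > C$, but the three-case argument above is the most transparent and is what I would actually write down.
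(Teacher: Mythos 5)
Your proof is correct and follows essentially the same route as the paper: both reduce the claim to the scalar inequality $(A-C)^+-(B-C)^+\geq (A-B-C)^+$ with $A\geq B\geq 0$, $C\geq 0$, and settle it by a case split on the position of $B$ (and $A$) relative to $C$, using that truncation is monotone and $(x)^+\leq x$ fails only below zero. The only cosmetic difference is that you use three cases where the paper merges your first and third into the single case $B<C$; the content is identical.
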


\begin{proof}
Denote the left and right hand sides of the inequality by $\text{LHS}$ and $\text{RHS}$.
Since all numbers in sight are non-negative, if $\sum_{i=1}^r c_ib_iu_i \geq C$ then $\sum_{i=1}^r c_ia_iu_i \geq C$, and since $C \geq 0$
\[
\text{LHS} = \sum_{i=1}^r c_i(a_i-b_i)u_i \geq \big(\sum_{i=1}^r c_i(a_i-b_i)u_i - C\big)^+ = \text{RHS}.
\]
If $\sum_{i=1}^r c_ib_iu_i < C$ then the second term in the left hand side vanishes and the inequality holds since $0 \leq  a_i-b_i \leq a_i$ for all $i$.
\end{proof}

\begin{lemma}\label{L:<G^(k),q_*> >= g^(k+1)}
Let $q_*$ be the subvertex of $P(b)$.
Let $F \colon \L^n \to \RR$ be a European function and $G$ its minimizer on $P(b)$.
Then for any $0 \leq k \leq n$ and and any $\omega \in \T_{n-k-1}$
\[
\langle G^{(k)}|_{\OP{succ}(\omega)} , q_* \rangle \geq G^{(k+1)}(\omega).
\]
\end{lemma}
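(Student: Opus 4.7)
The plan is to expand both sides explicitly via Definition \ref{D:Minimizer of European}, factor out a common weight $\beta(\mathbf{j})$ for each $\mathbf{j}\in\{0,\dots,m\}^k$, and then reduce the inequality to a term-by-term application of Lemma \ref{L:difference of truncated ell positive}. The one algebraic ingredient needed beyond this is the constraint $b''(0)=1-\sum_{i=1}^m b''(i)$ from Definition \ref{D:b''}.

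First I compute the left-hand side. Since $q_*=\sum_{i=0}^m b''(i)\,e_{\nu_i}$,
\[
\langle G^{(k)}|_{\OP{succ}(\omega)}, q_*\rangle \;=\; \sum_{i=0}^m b''(i)\, G^{(k)}(\omega\nu_i),
\]
and expanding via Definition \ref{D:Minimizer of European} together with $u_l(\omega\nu_i)=u_l(\omega)u_l(\nu_i)$, and setting $c_l^{\mathbf{j}}:=s_l\,\alpha_l(\mathbf{j})\, u_l(\omega)\geq 0$, yields
\[
\text{LHS} \;=\; \sum_{\mathbf{j}\in\{0,\dots,m\}^k}\beta(\mathbf{j})\sum_{i=0}^m b''(i)\bigl(\textstyle\sum_l c_l^{\mathbf{j}} u_l(\nu_i)-C\bigr)^+.
\]
On the other hand, splitting the outer index of $G^{(k+1)}(\omega)$ as $(i_0,\mathbf{j})\in\{0,\dots,m\}\times\{0,\dots,m\}^k$ and using $\beta(0)=1$, $\alpha_l(0)=u_l(\nu_0)$, together with $\beta(i_0)=b''(i_0)$ and $\alpha_l(i_0)=u_l(\nu_{i_0})-u_l(\nu_0)$ for $i_0\geq 1$, yields
\[
\text{RHS} \;=\; \sum_{\mathbf{j}}\beta(\mathbf{j})\Bigl[\bigl(\textstyle\sum_l c_l^{\mathbf{j}} u_l(\nu_0)-C\bigr)^+ + \sum_{i=1}^m b''(i)\bigl(\sum_l c_l^{\mathbf{j}}(u_l(\nu_i)-u_l(\nu_0))-C\bigr)^+\Bigr].
\]

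The key step is then to substitute $b''(0)=1-\sum_{i=1}^m b''(i)$ in the inner $i$-sum of the LHS, rewriting it as
\[
\bigl(\textstyle\sum_l c_l^{\mathbf{j}} u_l(\nu_0)-C\bigr)^+ + \sum_{i=1}^m b''(i)\Bigl[\bigl(\sum_l c_l^{\mathbf{j}} u_l(\nu_i)-C\bigr)^+-\bigl(\sum_l c_l^{\mathbf{j}} u_l(\nu_0)-C\bigr)^+\Bigr].
\]
The leading $(\cdots u_l(\nu_0)\cdots)^+$ term matches the $i_0=0$ contribution of the RHS verbatim, so after cancellation the required inequality reduces, for each $\mathbf{j}$ and each $i\in\{1,\dots,m\}$, to
\[
\bigl(\textstyle\sum_l c_l^{\mathbf{j}} u_l(\nu_i)-C\bigr)^+ - \bigl(\sum_l c_l^{\mathbf{j}} u_l(\nu_0)-C\bigr)^+ \;\geq\; \bigl(\sum_l c_l^{\mathbf{j}}(u_l(\nu_i)-u_l(\nu_0))-C\bigr)^+.
\]
This is exactly Lemma \ref{L:difference of truncated ell positive} with $a_l=u_l(\nu_i)$, $b_l=u_l(\nu_0)$, $c_l=c_l^{\mathbf{j}}$, and $u_l=1$: the hypotheses hold because $u_l=u_l^+\geq 0$, because Lemma \ref{L:supp u closed down} combined with $\nu_0=\mathbf{1}$ being the maximum of $\L$ gives $u_l(\nu_i)\geq u_l(\nu_0)\geq 0$, and because $c_l^{\mathbf{j}}, C, s_l\geq 0$. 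Weighting by the non-negative factors $\beta(\mathbf{j})b''(i)$ and summing yields the claim.

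The main obstacle I expect is the combinatorial bookkeeping, specifically spotting that the mismatch between $\beta(0)=1$ and $q_*(\nu_0)=b''(0)$ is not a defect but a feature: the simplex identity $\sum_{i=0}^m b''(i)=1$ is precisely what converts the $i_0=0$ contribution on the RHS into the cancellable $(\cdots u_l(\nu_0)\cdots)^+$ term on the LHS and leaves behind exactly the differences that Lemma \ref{L:difference of truncated ell positive} handles. Once this alignment is recognised, everything else is manipulation of non-negative quantities.
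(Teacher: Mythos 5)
Your proof is correct and takes essentially the same route as the paper's: both reduce the claim to Lemma \ref{L:difference of truncated ell positive} applied with $a_l=u_l(\nu_i)$, $b_l=u_l(\nu_0)$ (non-negativity coming from $u_l=u_l^+$ and Lemma \ref{L:supp u closed down}), and both use the identity $b''(0)=1-\sum_{i=1}^m b''(i)$ to absorb the $\nu_0$-term before comparing. The only difference is presentational: you carry out the cancellation term-by-term in $\mathbf{j}$, whereas the paper performs the same bookkeeping at the level of the aggregated quantities $G^{(k)}(\omega\nu_i)-G^{(k)}(\omega\nu_0)$.
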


\begin{proof}
We leave it to the reader to check that Lemma \ref{L:difference of truncated ell positive} together with the facts that $u_j(\omega \nu_i)=u_j(\omega)\cdot u_j(\nu_i)$ and $\alpha_j(i)=u_j(\nu_i)-u_j(\nu_0)$ and $\beta(0)=1$ imply that
\[
\sum_{i=1}^m \beta(i) \cdot \big( G^{(k)}(\omega \nu_i) - G^{(k)}(\omega \nu_0) \big) \geq G^{(k+1)}(\omega) - G^{(k)}(\omega \nu_0).
\]
Observe that $(ax)^+=a\cdot x^+$ if $a \geq 0$ and that $(\sum_i x_i)^+ \leq \sum_i x_i^+$.
Since $\beta(i)=b''(i)$ for all $i \geq 1$ and $b''(0)=1-\sum_{i=1}^m b''(i)$ (Definitions \ref{P:minimizer minimizes Eurpeans}, \ref{D:b''}) and since $q_*=\sum_{i=0}^m b''(i) \cdot e_{\nu_i}$ (Definition \ref{D:subvertex})
\begin{multline*}
G^{(k+1)}(\omega) \leq \big(1-\sum_{i=1}^m \beta(i)\big )\cdot G^{(k)}(\omega \nu_0) + \sum_{i=1}^m \beta(i) \cdot G^{(k)}(\omega \nu_i) \\ 
= \sum_{i=0}^m b''(i) \cdot G^{(k)}(\omega \nu_i) = \langle G^{(k)}|_{\OP{succ}(\omega)} , q_* \rangle
\end{multline*}
\end{proof}

\begin{proof}[Proof of Proposition \ref{P:minimizer minimizes Eurpeans}]
Use induction on $k$.
The base of induction is $F^{(0)}_\Phi = F = G^{(0)}$.
For the induction step, observe that $G^{(k)} \colon \L^{n-k} \to \RR$ is truncated $\ell$-positive by Proposition \ref{P:truncated ell positive closed under sums and products} because $\alpha_j(i), \beta_j(i), s_j$ and $u_j$ are non-negative.
The monotonicity of the expectation, Corollary \ref{C:sum of ell-pos+ at supervertex} and Lemma \ref{L:<G^(k),q_*> >= g^(k+1)} imply
\[
F^{(k+1)}_\Phi(\omega) = 
\langle F^{(k)}_\Phi|_{\OP{succ}(\omega)} , \Phi(\omega) \rangle \geq
\langle G^{(k)}|_{\OP{succ}(\omega)} , \Phi(\omega) \rangle \geq
\langle G^{(k)}|_{\OP{succ}(\omega)} , q_* \rangle \geq 
G^{(k+1)}(\omega).
\]
This completes the induction step.
The last part follows from Proposition \ref{P:F^k as conditional probability}.
\end{proof}

\section{Proofs of the results in Section \ref{S:main results}}

The following lemma is an elementary counting argument and left to the reader.

\begin{lemma}\label{L:exponential to polynomial}
Let $\Omega=\{\omega_1,\dots,\omega_r\}$ be a finite set.
We think of $\Omega^n$ as the set of words of length $n$.
Let $f \colon \Omega^n \to \RR$ be symmetric i.e $f(x_1\cdots x_n)$ does not depend on the order of the $x_i$'s.
Then
\[
\sum_{x_1,\dots,x_n \in \Omega} f(x_1\dots x_n)=
\sum_{k_1+\cdots+k_r=n} \frac{n!}{k_1!\cdots k_r!} \cdot f(\omega_1^{k_1}\cdots\omega_r^{k_r})
\]
where $\omega^k$ denotes the $k$-tuple $\omega \dots \omega$ for any $k \geq 0$.
\hfill $\Box$
\end{lemma}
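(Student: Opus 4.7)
The plan is to partition the index set $\Omega^n$ on the left-hand side into equivalence classes according to the multiset of letters appearing in each word, and then use the symmetry of $f$ to evaluate the contribution of each class.

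More precisely, for each word $w = x_1 \cdots x_n \in \Omega^n$ define its \emph{type} to be the tuple $(k_1,\dots,k_r) \in \mathbb{Z}_{\geq 0}^r$ where $k_j = |\{t : x_t = \omega_j\}|$. Clearly $k_1 + \cdots + k_r = n$, and conversely each composition $(k_1,\dots,k_r)$ of $n$ into $r$ non-negative parts arises as the type of at least one word. First I would observe that for a fixed type $(k_1,\dots,k_r)$, the representative word $\omega_1^{k_1}\cdots \omega_r^{k_r}$ has that type, and any other word of the same type is obtained by permuting its letters. Since $f$ is symmetric, $f(w)$ depends only on the type of $w$ and equals $f(\omega_1^{k_1}\cdots \omega_r^{k_r})$.

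Next I would count the number of words of a given type. This is the standard multinomial count: choosing which $k_1$ of the $n$ positions carry $\omega_1$, then which $k_2$ of the remaining $n-k_1$ carry $\omega_2$, and so on, yields $\binom{n}{k_1,\dots,k_r} = \frac{n!}{k_1!\cdots k_r!}$ distinct words. Combining this with the previous paragraph gives
\[
\sum_{x_1,\dots,x_n \in \Omega} f(x_1\cdots x_n) = \sum_{(k_1,\dots,k_r)} \#\{w \text{ of type } (k_1,\dots,k_r)\} \cdot f(\omega_1^{k_1}\cdots \omega_r^{k_r}) = \sum_{k_1+\cdots+k_r=n} \frac{n!}{k_1!\cdots k_r!} \cdot f(\omega_1^{k_1}\cdots \omega_r^{k_r}),
\]
which is the claimed identity.

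There is no real obstacle here; the only point that requires any care is the multinomial counting step, and that is entirely standard. The lemma is essentially a bookkeeping statement that lets one replace an exponential sum over $\Omega^n$ by a polynomial-in-$n$ sum over compositions, which is exactly the complexity reduction exploited in Proposition~\ref{P:symmetric polynomial complexity} and Theorem~\ref{T:pay off rational values}.
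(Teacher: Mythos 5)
Your proof is correct, and it is exactly the elementary counting argument the paper has in mind (the paper simply states the lemma is "an elementary counting argument and left to the reader"). Partitioning $\Omega^n$ by type, using symmetry to see $f$ is constant on each class, and counting each class by the multinomial coefficient is the intended proof.
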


\begin{proof}[Proof of Theorem \ref{T:Fmax Fmin is expectation of product measure}]
Given $\Phi \colon \T^* \to P(b)$ apply Propositions \ref{P:F^k_Phi bounded by F^k_q}, \ref{P:F^k as conditional probability} and \ref{P:P(Phi) and its properties}(\ref{Item:P:P(Phi) constant}) to $F^{(k)}_\Phi$ and $F^{(k)}_{q^*}$ and $F^{(k)}_{q_*}$.
Use the fact that $q^*,q_* \in \Gamma$ to deduce $F_{\max}(\Gamma)=E_{q^*}(F)$ and $F_{\min}(\Gamma)=E_{q_*}(F)$.
\end{proof}

\begin{proof}[Proof of Proposition \ref{P:symmetric polynomial complexity}]
Recall that $q^*=\sum_{i=0}^m b'(i) \cdot e_{\mu_i}$ and $q_*=\sum_{i=0}^m b''(i) \cdot e_{\nu_i}$.
Since 
\begin{eqnarray*}
&& E_{q^*}(F_{\omega-}) = \sum_{\tau \in \supp(q^*)^k} F(\omega \tau_1 \cdots \tau_k) \cdot q^*(\tau_1)\cdots q^*(\tau_n) \qquad \text{and} \\
&& E_{q_*}(F_{\omega-}) = \sum_{\tau \in \supp(q_*)^k} F(\omega \tau_1 \cdots \tau_k) \cdot q_*(\tau_1)\cdots q_*(\tau_n)
\end{eqnarray*}
and since $F$ is symmetric, the result follows from Lemma \ref{L:exponential to polynomial}.
\end{proof}

\begin{proof}[Proof of Theorem \ref{T:bounding Fmin below}]
Follows from Proposition \ref{P:minimizer minimizes Eurpeans}, Lemma \ref{L:exponential to polynomial} and  the definition of $\Gamma(\L^n,b)$.
\end{proof}

\begin{proof}[Proof of Proposition \ref{P:pay off is ell positive}]
Apply Definition \ref{D:European} to $u_1,\dots,u_m$ in Proposition \ref{P:ui from Di Ui} and $s_j=S_j(0)$.
\end{proof}

\begin{proof}[Proof of Theorem \ref{T:pay off rational values}]
Recall from Section \ref{Int:FM motivation} that $\Gamma_*$ is the set of risk neutral measures on $\L^n$, that $\overline{\Gamma_*}=\Gamma(\L^n,b)$ and that we assume that $\Gamma_* \neq \emptyset$.
Fix some $\omega \in \T_{n-k}$.
Choose some $P_* \in \Gamma_*$.
By Proposition \ref{P:P(Phi) and its properties}\ref{Item:P(Phi) gets them all} and since $P_*$ has no null-sets, $P_*=P(\Phi)$ for some $\Phi \colon \T^* \to P(b)$ with values in the interior of $P(b)$.

It follows from the binomial behaviour of the processes $S_i$ that $\theta$ is completely determined by the values of $S_i(t),\dots,S_m(t)$ for all $0 \leq t \leq n-k$.
Therefore the event $E_{n-k,\omega}$ described in Section \ref{Int:FM motivation} conicides with the event $A_{\theta}$ in Section \ref{SS:extension F tree}.
Equation \eqref{E:F from f by E*} together with Propositions \ref{P:F^k as conditional probability}, \ref{P:F^k_Phi bounded by F^k_q} and \ref{P:P(Phi) and its properties}\ref{Item:P:P(Phi) constant} show that
\[
H_{P_*}(n-k)(\omega) = 
E_{P_*}(F|E_{k,\omega}) =
E_{P(\Phi)}(F|A_{\theta}) =
F^{(n-k)}_{\Phi(\theta)} \leq F_{q^*}^{(n-k)}(\theta) = E_{q^*}(F_{\theta-}).
\]
Similarly, if $q_* \in P(b)$ then $H_{P_*}(n-k)(\omega) \geq E_{q_*}(F_{\theta-})$.
The result follows from Proposition \ref{P:symmetric polynomial complexity} because $q^*$ and $q_*$ are limit points of the interior of $P(b)$
\end{proof}

\bibliography{bibliography}
\bibliographystyle{plain}

\end{document}